\numberwithin{equation}{section}
\theoremstyle{plain}
\theoremstyle{plain}
\newtheorem{theorem}{Theorem}[section]
\newtheorem{corollary}[theorem]{Corollary}
\newtheorem{proposition}[theorem]{Proposition}
\newtheorem{lemma}[theorem]{Lemma}
\theoremstyle{definition}
\newtheorem{definition}[theorem]{Definition}
\theoremstyle{remark}
\newtheorem{remark}[theorem]{Remark}
\theoremstyle{definition}
\theoremstyle{remark}
\mathchardef\emptyset="001F
\newcommand{\R}{\mathbb{R}}
\newcommand{\dive}{\mathrm{div}}
\newcommand{\argmin}{\mathrm{argmin}}
\newcommand{\bb}{\boldsymbol{b}}
\newcommand{\de}{\mathrm{d}}
\newcommand{\by}{\boldsymbol{y}}
\newcommand{\cH}{\mathcal{H}}
\newcommand{\cP}{\mathcal{P}}
\newcommand{\cT}{\mathcal{T}}
\newcommand{\bell}{\boldsymbol{\ell}}
\definecolor{dred}{rgb}{.8,0,0}
\definecolor{ddmagenta}{rgb}{0.7,0,0.9}
\definecolor{ddcyan}{rgb}{0,0.2,1.0}
\definecolor{Orchid}{rgb}{0.7,0.4,0}
\definecolor{blue_links}{RGB}{13,0,180} 
\definecolor{lightblue}{RGB}{0.9,0.9,1}
\newcommand{\eps}{\varepsilon}
\newcommand{\cR}{\mathcal{R}}
\newcommand{\norm}[1]{\lVert #1 \rVert}
\title{Mean-field limits for entropic multi-population dynamical systems}
\author[S. Almi]{Stefano Almi}
\address[Stefano Almi]{Institute of Analysis and Scientific Computing, TU Wien, Wiedner Hauptstra\ss e 8-10, 1040 Vienna, Austria \& Dipartimento di Matematica e Applicazioni ``R.~Caccioppoli'',
Universit\`a di Napoli Federico II, via Cintia, 80126 Napoli, Italy}
\email{stefano.almi@unina.it}
\author[C. D'Eramo]{Claudio D'Eramo}
\address[Claudio D'Eramo]{4S Group, Corso Peschiera 146, 10138 Torino, Italy} 
\email{deramo.claudio@gmail.com}
\author[M. Morandotti]{Marco Morandotti}
\address[Marco Morandotti]{Dipartimento di Scienze Matematiche ``G.~L.~Lagrange'',
Politecnico di Torino, Corso Duca degli Abruzzi 24,
10129 Torino, Italy.}
\email{marco.morandotti@polito.it}
\author[F. Solombrino]{Francesco Solombrino}
\address[Francesco Solombrino]{Dipartimento di Matematica e Applicazioni ``R.~Caccioppoli'',
Universit\`a di Napoli Federico II, via Cintia, 80126 Napoli, Italy.}
\email{francesco.solombrino@unina.it}
\date{\today}
\keywords{Entropic regularization, mean-field limit, fast reaction limit, population dynamics, replicator-type dynamics, superposition principle}
\begin{document}
\subjclass[2020]{
35Q91, %  PDEs in connection with game theory, economics, social and behavioral sciences
91A16 %(2020-now) Mean field games (aspects of game theory)
%49N80, % Mean field games and control
%35Q93, % PDEs in connection with control and optimization
(60J76, % (2020-now) Jump processes on general state spaces
%49J45, %  Methods involving semicontinuity and convergence; relaxation
49J27, % Existence theories for problems in abstract spaces
37C10, % Vector fields, flows, ordinary differential equations
%47J35)  % Nonlinear evolution equations
35Q49% (2020-now) Transport equations
%49M41 %(2020-now) PDE constrained optimization (numerical aspects)
)}

\begin{abstract}
The well-posedness of a multi-population dynamical system with an entropy regularization and its convergence to a suitable mean-field approximation are proved, under a general set of assumptions. Under further assumptions on the evolution of the labels, the case of different time scales between the agents' locations and labels dynamics is considered. The limit system couples a mean-field-type evolution in the space of positions and an instantaneous optimization of the payoff functional in the space of labels.

\end{abstract}

\maketitle

\tableofcontents

%I leave here 2 comments:
%\begin{itemize}
%\item please check Section 5. I get convergence results for every $p$, while Claudio wrote $p \in [1,2]$. I understand that convexity gets worse, but I think the controls are still fine because we work in $C_{\varepsilon}$. If we control the $L^{2}$-norm, we control everything.
%
%\item the notion of Fr\'echet differentiability has been discussed? We can only take convex variations in $C_{\varepsilon}$, so I guess Proposition~5.6 is ok, but the proof is slightly different, as we would need to evaluate with $\ell_{1}$ and $\ell_{1} + t(\ell_{2} - \ell_1)$ for $\ell_{1}, \ell_{2} \in C_{\varepsilon}$.
%\end{itemize}

\section{Introduction}
\noindent\textbf{Overview of the topic.}
After being introduced in statistical physics by Kac \cite{Kac} and then by McKean \cite{McKean} to describe the collisions between particles in a gas, the mean-field approximation has become a powerful tool to analyze the asymptotic behavior of systems of interacting agents in biology, sociology, and economics. We may mention, \emph{e.g.},  recent applications to the description of cell aggregation and motility \cite{motility2,motility1}, coordinated animal motion \cite{animal},  cooperative robots \cite{robots}, and influence of key investors in the stock market \cite[Introduction]{stock}. 

The modeling of these systems is usually inspired  from Newtonian laws of motion and is based on pairwise forces accounting for repulsion/attraction, alignment, self-propulsion/friction in biological, social, or economical interactions. In this way, the evolution of~$N$ agents with time-dependent locations, $x^1_t,\dots,x^N_t$ in $\mathbb{R}^d$ is described by the ODE system 
\begin{equation*}
    \dot{x}^i_t = \frac{1}{N}\sum\limits_{j=1}^N f(x^i_t,x^j_t) \quad\textrm{for }i=1,\dots,N,\,\, t\in (0,T],
\end{equation*}
where $f$ is a pre-determined pairwise interaction force between pairs of agents. The above first-order structure of multi-agent interactions appears, for instance, in some recent model in opinion formation \cite{opinion}, vehicular traffic flow \cite{traffic}, pedestrian motion \cite{Tosin}, and synchronisation of chemical and biological oscillators in neuroscience \cite{chem}.

Another context, where this approach has proved to be a useful one, is that of evolutionary games, where players are simultaneously willing to optimize their cost: this includes game theoretic models of evolution \cite{Josef} or mean-field games (\cite{Malhame,Jean}) in order to describe \emph{consensus} problems. In this latter setting, the notion of spatially inhomogeneous evolutionary games has been recently proposed \cite{Ambrosio} (see also \cite{Mor1} for a related numerical scheme). There, the dynamics is not the outcome of an underlying non-local optimal control problem, but is determined by the agents' local (in time and space) decisions, as in the well-known replicator dynamics \cite{Josef}. 

We give an overview of the model in  \cite{Ambrosio}, which is relevant for the purpose of the paper. The position of an agent is described by $x \in \mathbb{R}^d$, while $U$ denotes the set of \emph{pure} strategies. A pay-off function $J\colon (\mathbb{R}^d\times U)^2\to \mathbb{R}$ is given,  so that $J(x,u,x',u')$ is the pay-off that {a} player in position~$x$ gets playing pure strategy $u$ against {a} player in position $x'$ with pure strategy $u'$.
However, agents are assumed to play different strategies according to a probability measure $\sigma \in \cP(U)$, which is referred to as {a} \emph{mixed} strategy. Hence, the state variable is given by the pair $(x,\sigma)$ accounting for the  position and the mixed strategy of an agent and 
\begin{equation*}
    \int_U J(x,u,x',u')\,\de \sigma'(u')
\end{equation*}
is the pay-off that {a} player in position $x$ gets playing strategy $u$ against {a} player in position $x'$ with mixed strategy $\sigma'$. If we then consider $N$ agents, whose states are denoted by $(x^i_t,\sigma^i_t)$, $i=1,\dots,N$, the pay-off that the $i$-th player gets playing strategy $u$ against all the other players at time $t$ is
\begin{equation*}
    \mathcal{J}(x_t^i,u)\coloneqq \frac{1}{N}\sum\limits_{j=1}^N \int_U J(x^i_t,u,x^j_t,u')\,\de\sigma_t^j(u').
\end{equation*}
In order to maximize this pay-off, the $i$-th player has to compare it with the mean pay-off over all possible strategies according to their mixed strategy $\sigma^i_t$. This leads us to the system of ODEs 
\begin{equation*}
    \begin{cases}
    \dot{x}_t^i = v(x^i_t,\sigma^i_t)  \\ 
    \dot{\sigma}_t^i = \left(\mathcal{J}(x_t^i,\cdot)-\displaystyle{\int_U} \mathcal{J}(x_t^i,v)\,\de\sigma_t^i(v)\right)\sigma_t^i
    \end{cases}\qquad \textrm{for }i=1,\dots,N,\,\,t\in (0,T].
\end{equation*}

In the later contribution \cite{MS2020}, the well-posedness theory as well as the mean-field approximation of the above system have been inserted in a more general framework which is suitable for a broader range of applications.
In this setting, the velocity~$v$ of each agent is also depending on the behavior of the other ones, and the replicator dynamics for the strategies has been replaced by a more general vector field $\cT$, that is 
\begin{equation}\label{eq:system_1}
\begin{cases}
\dot{x}_t^i = v_{\Lambda_t^N}(x^i_t,\sigma_t^i)\\[3mm]
 \dot{\sigma}_t^i = \cT_{\Lambda_t^N}(x_t^i,\sigma_t^i)
\end{cases}\qquad \textrm{for }i=1,\dots,N,\,\,t\in(0,T],
\end{equation}
where $\Lambda_t^N = \sum_{j=1}^N \delta(x_t^j,\sigma^j_t)\in \cP(\mathbb{R}^d\times \cP(U))$ is %the state of the system 
a distribution of agents with strategies at time~$t$. 
The interpretation, given in \cite{MS2020}, of these types of systems has a wider scope than the one of game theory: the interacting agents are assumed to belong to a number of different species, or populations, and therefore, more in general, we deal with \textit{labels} $\ell^i$ instead of (mixed) strategies~$\sigma^i$. 
This point of view can be used to distinguish informed agents steering pedestrians, to highlight the influence of few key investors in the stock market, or to recognize leaders from followers in opinion formation models. Throughout this work, we will adopt this perspective. 
Under a rather general set of assumptions on~$v$ and~$\cT$ (which, in particular, encompass the case of the replicator dynamics), it has been shown in \cite{MS2020} that the empirical measures $\Lambda_t^N$ associated with system \eqref{eq:system_1} converge to a probability measure on the state space, which solves the continuity equation
\begin{equation}\label{eq:cont}
\partial_t\Lambda_t + \textrm{div}(b_{\Lambda_t}\,\Lambda_t) = 0,
\end{equation}
where $b_{\Lambda_t}$ is the vector field which drives the state in system \eqref{eq:system_1}.

In \cite{entropy}, a further research direction has been explored. 
There, the replicator equation is slightly modified adding an \emph{entropy regularization} $\mathcal{H}$, see \eqref{entropy_functional_intro} below.
Besides providing a mean-field theory for such systems, the authors discuss the \emph{fast reaction limit} scenario, modeling situations in which the strategy (or label) switching of particles in the systems is actually happening at a faster time scale than that of the agents' dynamics.
This leads us to the purpose of our paper.

\smallskip

\noindent\textbf{Contribution of the present work.} In the present paper, we complement the abstract framework of \cite{MS2020} by adding an entropy regularization and we analyze its effects on the dynamics from an abstract point of view.
We fix a reference probability measure $\eta\in\cP(U)$ and we consider only diffuse probability densities~$\ell$ with respect to~$\eta$.
We set 
\begin{subequations}\label{entropy_functional_intro}
\begin{equation}
\cH(\ell) \coloneqq \ell \big[I(\ell)-\log(\ell)\big],
\end{equation}
where $I(\ell)$ is the negative entropy of the probability density $\ell$, namely
\begin{equation}
I(\ell)\coloneqq \int_{U} \ell(u)\,\log(\ell (u)) \,\de\eta(u).
\end{equation}
\end{subequations}
Then we analyze the system
\begin{equation}\label{one1}
\begin{cases}
\dot{x}_t^i = v_{\Lambda_t^N}(x^i_t,\ell_t^{\,i})\\[2mm]
\dot{\ell}_t^{\,i} = \lambda\,[\cT_{\Lambda_t^N}(x_t^i,\ell_t^{\,i})+\varepsilon\, \mathcal{H}(\ell_t^{\,i})]
\end{cases}\qquad i=1,\dots,N,\,\,t\in(0,T],
\end{equation}
where $\ell^{\,i}_t$ denotes the label of the $i$-th agent, $\varepsilon>0$ is a small parameter which modulates the intensity of the entropy functional, and $\lambda\geq 1$ takes into account the possible time scale difference between the positions and labels dynamics.
In the particular case where~$\cT_\Lambda$ is the operator of the replicator dynamics, this is exactly the system considered in \cite{entropy}.
The motivation for this regularization has already been discussed in \cite{entropy}: it serves to avoid degeneracy of the labels (see \cite[Example~2.1]{entropy} for a precise discussion) and allows for faster reactions to changes in the environment. We also refer to \cite{FC2005} for an earlier contribution on entropic regularizations in a game-theoretical setting.

From the mathematical point of view, the state space for the labels becomes now $\cP(U)\cap L^p(U,\eta)$ for some $p>1$.
As non-degeneracy is a desirable feature also for the wider setting considered in \cite{MS2020}, our first goal is then to establish a well-posedness theory in a similar spirit for system \eqref{one1}.
As it happened in \cite{MS2020}, a crucial point is giving a suitable set of assumptions on the dynamics which allows one to rely on the stability estimates for ODE's in convex subsets of Banach spaces developed in \cite[Section~I.3, Theorem~1.4, Corollary~1.1]{Brezis} and recalled in Theorem~\ref{Brezis_cor} below.
In particular, a sufficient set of assumptions on the operator~$\cT$ which complies with this setting is given at the beginning of Section~\ref{sec_decoupled}, see (T1)--(T3).
It slightly adapts and, to some extent, simplifies the assumptions on \cite{MS2020}, since here we are only considering the case of diffuse measures, and comprises both the case of the replicator dynamics and some models of leader-follower interactions with label switching modeled by reversible Markov chains \cite{Mor1} (see Remark~\ref{r:examples}).

The well-posedness of the particle model is proved in Theorem~\ref{Entropic_Problem_theorem} as a consequence of the estimates in Proposition~\ref{regularity_properties_entropic_vector_field}.
The convergence to a mean-field limit is discussed in the subsequent Section~\ref{ELS}.
In Section~\ref{Fast_Reaction_Limit}, instead, we focus on the special case of replicator-type models and revisit the results of \cite{entropy} from an abstract and more general point of view, which may also account for further modeling possibilities.

More precisely, we assume that the operator~$\cT$ takes the form
\begin{equation}
\label{fast_reaction_intro}
\cT_{\Lambda} (x, \ell) \coloneqq \left( \int_{U} \partial_\xi F_{\mu} ( x , \ell (u) , u ) \ell (u) \, \de \eta (u) - \partial_\xi F_{\mu } ( x , \ell , \cdot ) \right ) \ell,
\end{equation}
for $x \in \R^{d}$ and $\ell \in\cP(U)\cap L^{p}(U, \eta)$, and where $\mu$ is the marginal of $\Lambda$ in $\R^{d}$. In \eqref{fast_reaction_intro}, $\partial_\xi$ denotes the derivative of $F$ with respect to its second variable.
%for a map $F\colon \mathcal{P}_1(\mathbb{R}^d)\times \mathbb{R}^d \times ( 0 , +\infty) \times U \to [-\infty,+\infty]$ satisfying the following properties:

As we discuss in Remark~\ref{rem5.1}, for a proper choice of $F_\mu$, the above setting encompasses the case of \emph{undisclosed} replicator dynamics. By undisclosed it is meant that the players are not aware of their opponents' strategies. This is exactly the case dealt with in \cite{entropy}; see \cite[Remark~2.9]{entropy} for the difficulties connected to the fast reaction limit in the general case.
We stress, however, that~\eqref{fast_reaction_intro} has a more flexible structure than the case-study of the replicator dynamics. 
For instance, as we discuss again in Remark~\ref{rem5.1}, it allows one to consider pay-offs depending also on how often a strategy is played, penalizing choices that become predictable by other players. % \red{\cite{MASSAI?}}.
From the mathematical point of view, examples of functions fulfilling our hypotheses (F1)--(F5) of Section~\ref{Fast_Reaction_Limit} are discussed in Proposition~\ref{integralcase}.

For a system of the form \eqref{one1} with~$\cT$ given by \eqref{fast_reaction_intro}, we perform the fast reaction limit $\lambda\to +\infty$. 
This corresponds to a reasonable modeling assumption, that the label dynamics takes place at a much faster rate that the spatial dynamics.
In Theorem~\ref{Newton} we prove the convergence of system \eqref{one1}--\eqref{fast_reaction_intro} to a Newton-like system of the form 
\begin{equation*}
    \dot{x}_t^i = v_{\Lambda_t^N}(x_t^i,\ell_t^{*\,i}(x_t^1,\dots,x_t^N)),\qquad\textrm{for }i=1,\dots,N,\,\,t\in (0,T],
\end{equation*}
where $\ell_t^{*\,i}$ optimizes the functional
\begin{equation}\label{Gfun_intro}
G_{\mu} (x,\ell) \coloneqq  \int_{U} \big( F_{\mu}(x,\ell(u),u) + \varepsilon \ell(u) ( \log (\ell(u))  - 1) \big)\,\de\eta(u), \qquad \text{for $\ell \in C_{\varepsilon}$} 
\end{equation}
for fixed~$x$ and~$\mu$.
We stress that, differently from \cite{entropy}, we do not need to explicitly compute the minimizer as it was done in the special case of the replicator dynamics.
We remark that a crucial assumption for our proofs in Section~\ref{Fast_Reaction_Limit} is \emph{convexity} of the function~$F$ with respect to~$\ell$ and actually our proofs are guided by the heuristic intuition that, for fixed~$x$ and~$\mu$, the label equation in \eqref{one1}--\eqref{fast_reaction_intro} is the formal gradient flow of \eqref{Gfun_intro} with respect to the spherical Hellinger distance of probability measures \cite{KV2019} (see also \cite{Mor1}).
However, we provide explicit computations which do not resort to this gradient flow structure.

\smallskip

\noindent\textbf{Outlook.}
The present paper provides the well-posedness theory and the mean-field approximation for multi-population agent-based systems with an entropic regularization on the labels. 
We remark that such a regularization in the trajectories prevents concentration in the space of labels.
An analogous role could be played by diffusive terms in the space of positions, whose effects we plan to address in future contributions.
We also provide an abstract structure on the evolution of the labels to perform fast reaction limits, which in particular contains the special case of \cite{entropy}.
On the one hand, the assumption that one agent is not fully aware of the label distribution of the other ones (the so-called undisclosed setting we consider here) is realistic in many applications.
On the other hand, it would be interesting to single out the right assumptions to overcome this restriction while performing the fast reaction limite, for instance allowing one to consider~$F$ depending on the whole~$\Lambda$, and not only on the marginal~$\mu$, in \eqref{fast_reaction_intro}.

\smallskip

\noindent\textbf{Overview of the paper.}
In Section~\ref{sec:prel}, we present our notation, recall some tools of functional analysis and measure theory, and outline the basic settings of the problem. 
In Section~\ref{sec_decoupled}, we present the general assumptions and we study the entropic dynamical system \eqref{one1}, proving its well-posedness.
In Section~\ref{ELS}, we prove the mean-field limit of \eqref{one1} to a continuity equation such as \eqref{eq:cont}. 
In Section~\ref{Fast_Reaction_Limit}, we obtain the fast reaction limit of system \eqref{one1}, together with the explicit rate of convergence in terms of the parameter~$\lambda$.

\section{Preliminaries}\label{sec:prel}
\subsection{Basic notation}\label{BN}
If $(\mathcal{X},\mathsf{d}_\mathcal{X})$ is a metric space we denote by $\cP(\mathcal{X})$ the space of probability measures on $\mathcal{X}$. The notation $\cP_c(\mathcal{X})$ will be used for probability measures on  $\mathcal{X}$ having compact support. We denote by $C_0(\mathcal{X})$ the space of continuous functions vanishing at the boundary of~$\mathcal{X}$, and by $C_b(\mathcal{X})$ the space of bounded continuous functions. Whenever $\mathcal{X}=\mathbb{R}^d$, $d\geq 1$, it remains understood that it is endowed with the Euclidean norm (and induced distance), which shall be simply denoted by $\vert\cdot\vert$. For a Lipschitz function $f\colon \mathcal{X}\rightarrow \mathbb{R}$ we denote by 
\begin{equation*}
\mathrm{Lip}(f)\coloneqq \sup_{\substack{x,\,y\, \in\mathcal{X}  \\ x  \neq y}}\dfrac{\vert f(x)-f(y)\vert}{\mathsf{d}_\mathcal{X}(x,y)}
\end{equation*}
its Lipschitz constant. The notations $\mathrm{Lip}(\mathcal{X})$ and $\mathrm{Lip}_b (\mathcal{X})$ will be used for the spaces of Lipschitz and bounded Lipschitz function on $\mathcal{X}$, respectively.  Both are normed spaces with the norm $\norm{f} \coloneqq \norm{f}_\infty + \mathrm{Lip}(f)$, where $\norm{\cdot}_\infty$ is the supremum norm.  In a complete and separable metric space $(\mathcal{X},\mathsf{d}_\mathcal{X})$, we shall use the Kantorovich-Rubinstein distance $\mathcal{W}_1$ in the class of $\cP(\mathcal{X})$, defined as 
\begin{equation}\label{W1}
\mathcal{W}_1(\mu,\nu)\coloneqq \sup\left\{\,\int_{\mathcal{X}}\varphi(x)\,\de\mu(x)-\int_{\mathcal{X}}\varphi(x)\,\de\nu(x)\,\colon\varphi\in \mathrm{Lip}_b (\mathcal{X}),\,\mathrm{Lip}(\varphi)\leq 1 \right\}
\end{equation}
or, equivalently (thanks to the Kantorovich duality), as 
\begin{equation*}
\mathcal{W}_1(\mu,\nu)\coloneqq \inf\left\{\,\int_{\mathcal{X}\times\mathcal{X}}\mathsf{d}_\mathcal{X}(x,y)\,\de\Pi(x,y)\,\colon\Pi(A\times\mathcal{X})=\mu(A),\,\,\Pi(\mathcal{X}\times B)=\nu(B)\right\},
\end{equation*}
involving couplings $\Pi$ of $\mu$ and $\nu$. It can be proved that the infimum is actually attained. Notice that $\mathcal{W}_1(\mu,\nu)$ is finite if $\mu$ and $\nu$ belong to the space 
\begin{equation}\label{P1}
\cP_1(\mathcal{X}) \,\coloneqq \left\{\mu\in\cP(\mathcal{X})\colon \int_\mathcal{X} \mathsf{d}_\mathcal{X}(x,\overline{x})\,\de\mu(x)<+\infty \textrm{ for some } \overline{x}\in\mathcal{X} \right\}
\end{equation}
and that $(\cP_1(\mathcal{X}),\mathcal{W}_1)$ is complete if $(\mathcal{X},\mathsf{d}_\mathcal{X})$ is complete. For a probability measure $\mu\in\cP(\mathcal{X})$, if $\mathcal{X}$ is also a Banach space, we define the first moment $m_1(\mathcal{\mu})$ as 
\begin{equation*}
m_1(\mu)\coloneqq \int_\mathcal{X} \norm{x}_{\mathcal{X}}\,\de\mu(x).
\end{equation*}
So that, the finiteness of the integral above is equivalent to $\mu\in\cP_1(\mathcal{X})$, whenever the distance $\mathsf{d}_\mathcal{X}$ is induced by the norm $\norm{\cdot}_{\mathcal{X}}$\,.

Let $\mu\in\cP(\mathcal{X})$ and $f\colon\mathcal{X}\rightarrow Z$ a $\mu$-measurable function be given. The push-forward measure $f_\# \mu\in \cP(Z)$ is defined by $f_\# \mu (B) = \mu(f^{-1}(B))$ for any Borel set $B\subset Z$.  It also holds the change of variables formula
\begin{equation*}
\int_Z g(z)\,\de f_\#\mu(z) = \int_\mathcal{X} g(f(x))\,\de\mu(x)
\end{equation*} 
whenever either one of the integrals is well defined.

For $E$ being a Banach space, the notation $C^1_b(E)$ will be used to denote the subspace $C_b(E)$ of functions having bounded continuous Fréchet differential at each point. The notation $D\phi(\cdot)$ will be used to denote the Fréchet differential. In the case of a function $\phi\colon [0,\,T]\times E \rightarrow \mathbb{R}$, the symbol $\partial_t$ will be used to denote partial differentiation with respect to $t$, while $D$ will only stand for the differentiation with respect to the variables in $E$.

\subsection{Functional setting}\label{FS}
The space of labels $(U,\mathsf{d})$ will be assumed to be a 
%\red{separable} 
compact metric space. Consider the Borel $\sigma$-algebra $\mathfrak{B}$ on $U$ induced by the metric $\mathsf{d}$ and let us fix a probability measure $\eta \in \cP(U)$ which we can assume, without loss of generality, to have full support, \emph{i.e.}, $\mathrm{spt}(\eta)=U$. Notice that the measure space $(U,\mathfrak{B},\eta)$ is $\sigma$-finite and separable.
%\footnote{\red{A measure space $(\mathcal{X},\mathcal{A},\mu)$ is said to be separable if there exists a countable family of elements in $\mathcal{A}$, for which the $\sigma$-algebra generated by this family coincides with $\mathcal{A}$, see \cite[Definition on page~98]{Brezis2}.}}. 
For $p\in[1,+\infty]$, we %denote by $E$ 
consider the space $L^p(U,\eta)$, which is a separable Banach space. 
%\red{which we suppose to be separable}\footnote{\red{For $p\in [1,+\infty)$, such a space is actually separable because of \cite[Theorem~4.13]{Brezis2}. For $p=+\infty$, a sufficient condition would be that $U$ contains finitely many elements.}\blu{Questo spazio è sempre separabile se $U$ è compatto. Claudio non l'aveva preso compatto, quindi ha inserito questo commento}}.
Given $r$ and $R$ such that $0\leq r <1< R \leq + \infty$, we introduce the set of probability densities with respect to~$\eta$, having lower bound $r$ and upper bound $R$:
\begin{equation}\label{Ceps}
C_{r,R} \coloneqq \left\{\ell\in L^p(U,\eta): \int_{U} \ell(u)\,\de\eta(u) = 1 \mathrm{\,\,and\,\,} r\leq \ell\leq R\,\, \eta\textrm{-}a.e. \right\};
\end{equation}
notice that $C_{0,\infty}$ is the set of $L^p$-regular probability densities with respect to $\eta$.
%\red{For our purposes, both constants will depend on a small parameter $\varepsilon>0$, \emph{i.e.}, $r=r_\varepsilon$ and $R=R_\varepsilon$\,, so that we shall use the simpler notation $C_\varepsilon$ instead of $C_{r_\varepsilon,R_\varepsilon}$. Such dependence will be discussed in detail in the Section~\ref{sec_decoupled}.}
%With an abuse of notation, we denote by $C_{0,\infty}$ the set of unbounded probability densities with respect to $\eta$, having $L^p$ regularity:
%\begin{equation}\label{C}
%C_{0,\infty} \coloneqq \left\{\ell\in E : \int_{U} \ell(u)\,\de\eta(u) = 1 \mathrm{\,\,and\,\,} \ell\geq 0 \,\, \eta\textrm{-}a.e. \right\}.
%\end{equation}
Since $\eta(U)=1$, the inclusion $L^p(U,\eta)\subset L^1(U,\eta)$ holds for all $p\in [1,+\infty]$ and therefore the sets $C_{r,R}$ are closed with respect to the $L^p$-norm. Thus, when equipped with the $L^p$-norm, the sets $C_{r,R}$ are separable\footnote{\,A subset of a separable metric space is also separable, \cite{Folland}.}. 
Finally, notice that $C_{r,R}$  are also convex and their interiors are empty. 

\smallskip

The state variable of our system is $y\coloneqq (x,\ell)\in \mathbb{R}^d\times C_{0,\infty} \eqqcolon Y$. The component $x\in \mathbb{R}^d $ describes the location of an agent in space, whereas the component $\ell\in C_{0,\infty}$ describes the distribution of labels %(or pure strategies in the case of a replicator dynamics) 
of the agent. A probability distribution $\Psi \in \cP(Y)$ denotes a distribution of agents with labels. 
To outline the functional setting for the dynamics, we define $\overline{Y} \coloneqq \mathbb{R}^d\times L^p(U,\eta)$ and the norm  $\norm{\cdot}_{\overline{Y}}$ by 
\begin{equation}\label{norm}
\norm{y}_{\overline{Y}} = \norm{(x,\ell)}_{\overline{Y}} \coloneqq |x|+\norm{\ell}_{L^p(U,\eta)}.
\end{equation}
Since %$Y_\varepsilon\subset \overline{Y}$,
$Y\subset \overline{Y}$, we equip %$Y_\varepsilon$
$Y$ with the $\norm{\cdot}_{\overline{Y}}$ norm. For a given $\varrho>0$, we denote by $B_\varrho$ the closed ball of radius $\varrho$ in $\mathbb{R}^d$ and by %$B_R^{Y_\varepsilon}$
$B_\varrho^{Y}$ the closed ball of radius $\varrho$ in %$Y_\varepsilon$,
$Y$, namely, %$B_R^{Y_\varepsilon} = \{y\in\ Y_\varepsilon \colon \norm{y}_{\overline{Y}}\leq R\}$. 
$B_\varrho^{Y} = \{y\in\ Y : \norm{y}_{\overline{Y}}\leq \varrho\}$.
The Banach space structure of $\overline{Y}$ allows us to define the first moment $m_1(\Psi)$ for a probability measure %$\Psi\in\cP(Y_\varepsilon)$
$\Psi\in\cP(Y)$ as
\begin{equation*}
m_1(\Psi)\coloneqq %\int_{Y_\varepsilon}\norm{y}_{\overline{Y}}\,\de\Psi(y),
\int_{Y}\norm{y}_{\overline{Y}}\,\de\Psi(y),
\end{equation*}
so that the space %$\cP_1(Y_\varepsilon)$, 
$\cP_1(Y)$ defined in \eqref{P1}%, see Section~\ref{sec_decoupled}, 
can be equivalently characterized as 
\begin{equation*}
%\cP_1(Y_\varepsilon) = \{\Psi\in\cP(Y_\varepsilon)\colon m_1(\Psi)<+\infty\}.
\cP_1(Y) = \{\Psi\in\cP(Y) : m_1(\Psi)<+\infty\}.
\end{equation*}
%Given the arbitrariness of $\varepsilon$ and since we will be also interested in the limit as $\varepsilon\to0$, we define the space $Y\coloneqq  \mathbb{R}^d\times C_{0,\infty}$. Similarly, we equip $Y$ with the norm $\norm{\cdot}_{\overline{Y}}$ and we denote by $B_R^Y$ the closed ball of radius $R$ in $Y$. Notice that $B_R^{Y_\varepsilon}\subset B_R^Y$ for every $\varepsilon>0$.

Whenever we fix $r$ and $R$ in \eqref{Ceps}, we set $Y_{r,R}\coloneqq \R^{d}\times C_{r,R}$ and we modify the notation above accordingly.

\medskip

We conclude this section by recalling the following existence result for ODEs of convex subsets of Banach spaces, which is stated in \cite[Corollary~2.3]{MS2020} and \cite[Theorem~1]{AAMS2022}, generalizing the well-known results of \cite[Section~I.3, Theorem~1.4, Corollary~1.1]{Brezis}.
%\begin{theorem}\label{Brezis_thm}
%Let $(E,\norm{\cdot}_E)$ be a Banach space, let $C$ be a closed convex subset of $E$, and, for $t\in [0,T]$, let $A(t,\cdot)\colon C \rightarrow E$ be a family of operators satisfying the following properties:
%\begin{itemize}
%\item[(i)] there exists a constant $L\geq 0$ such that for every $c_1$, $c_2 \in C$ and $t\in [0,T]$
%\begin{equation*}
%\norm{A(t,c_1)-A(t,c_2)}_E \leq L\,\norm{c_1-c_2}_E;
%\end{equation*}
%\item[(ii)] for every $c\in C$ the map $t\mapsto A(t,c)$ is continuous in $[0,T]$;
%\item[(iii)] for every $R>0$ there exists $\theta_R > 0$ such that for every $c \in C\cap\{e\in E : \norm{e}_E\leq R\}$
%\begin{equation*}
%c+ \theta_R \,A(t,c) \in C.
%\end{equation*}
%\end{itemize}
%Then for every $\overline{c}\in C$ there exists a unique curve $c\colon [0,T]\rightarrow C$ of class $C^1$ such that
%\begin{equation*}
%\frac{\de}{\de t} c_t= A(t,c_t)\quad \textrm{in }[0,T], \quad c_0 = \overline{c}.
%\end{equation*}
%Moreover, if $c^1$ and $c^2$ are the solutions starting from the initial data $\overline{c}^1$, $\overline{c}^2\in C$, respectively, we have
%\begin{equation*}
%\norm{c^1_t-c^2_t}_E \leq e^{Lt}\,\norm{\overline{c}^1-\overline{c}^2}_E\qquad \textrm{for every } t\in [0,T].
%\end{equation*}
%\end{theorem}
\begin{theorem}
\label{Brezis_cor}
Let $(E,\norm{\cdot}_E)$ be a Banach space, let $C$ be a closed convex subset of $E$, and, for $t\in [0,T]$, let $A(t,\cdot)\colon C \rightarrow E$ be a family of operators satisfying the following properties:
\begin{itemize}
\item[(i)]for every $\varrho>0$ there exists a constant $L_\varrho > 0$ such that for every $t\in [0,T]$ and $c_1$, $c_2\in C\cap\{e\in E : \norm{e}_E\leq \varrho\}$
\begin{equation*}
\norm{A(t,c_1)-A(t,c_2)}_E\leq L_\varrho \norm{c_1-c_2}_E;
\end{equation*}
\item[(ii)] for every $c\in C$ the map $t\mapsto A(t,c)$ belongs to $L^1([0,T];E)$;
\item[(iii)] for every $\varrho>0$ there exists $\theta_\varrho > 0$ such that for every $c \in C\cap\{e\in E : \norm{e}_E\leq \varrho\}$
\begin{equation*}
c+ \theta_\varrho A(t,c) \in C;
\end{equation*}
\item[(iv)] there exists $M>0$ such that for every $c\in C$, there holds
\begin{equation*}
\norm{A(t,c)}_E \leq M(1+\norm{c}_E).
\end{equation*}
\end{itemize}
Then for every $\overline{c}\in C$ there exists a unique curve $c\colon [0,T]\rightarrow C$ of class $C^1$ such that
\begin{equation}\label{Brezis_ODE}
\frac{\de}{\de t} c_t= A(t,c_t)\quad \textrm{in }[0,T], \quad c_0 = \overline{c}.
\end{equation}
Moreover, if $c^1,c^2$ are the solutions with initial data $\overline{c}^1,\overline{c}^2\in C\cap \{e\in E\colon \norm{e}_E\leq \varrho\}$, respectively, there exists a constant $L=L(M,\varrho,T)>0$ such that 
\begin{equation}\label{Brezis_Estimate}
\norm{c^1_t-c^2_t}_E \leq e^{Lt}\,\norm{\overline{c}^1-\overline{c}^2}_E\qquad \textrm{for every } t\in [0,T].
\end{equation}
\end{theorem}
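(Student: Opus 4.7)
My plan is to prove the theorem as a Cauchy–Lipschitz type statement adapted to the convex constraint. Assumptions (i)–(ii) give Carathéodory regularity in $(t,c)$, (iv) provides the global a priori bound, and (iii) is a sub-tangential (Nagumo–Brezis) condition ensuring that trajectories do not leave $C$. First, any $C^1$ solution of \eqref{Brezis_ODE} satisfies $\|c_t\|_E \leq \|\bar c\|_E + \int_0^t M(1+\|c_s\|_E)\,\de s$ by (iv), so Gronwall yields $\|c_t\|_E \leq \varrho^* := (1+\|\bar c\|_E)e^{MT}$ for every $t\in[0,T]$. I fix once and for all a radius $\varrho > \varrho^*$ and abbreviate $L := L_\varrho$, $\theta := \theta_\varrho$ for the constants supplied by (i) and (iii).

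Next I construct Euler approximants with step $h := T/N < \theta$, setting $t_k := kh$, $c_0^N := \bar c$ and, iteratively, $c_{k+1}^N := c_k^N + \int_{t_k}^{t_{k+1}} A(s,c_k^N)\,\de s$, extended piecewise-linearly between the nodes. The decisive point is the convex-combination identity
\[
c_{k+1}^N = \bigl(1 - \tfrac{h}{\theta}\bigr)\, c_k^N + \tfrac{h}{\theta}\bigl(c_k^N + \theta\,\bar A_k^N\bigr), \qquad \bar A_k^N := \tfrac{1}{h}\int_{t_k}^{t_{k+1}} A(s,c_k^N)\,\de s,
\]
which places $c_{k+1}^N$ in $C$ by convexity of $C$, provided the bracket belongs to $C$. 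I obtain the latter from (iii): the cleanest route is to first prove the theorem under the auxiliary assumption that $t\mapsto A(t,c)$ is continuous (so (iii) applies at $t=t_k$) and then remove this extra regularity by mollifying in time and passing to the limit in the mollification parameter. A discrete Gronwall via (iv) propagates $\|c_k^N\|_E \leq \varrho^*$ along the iteration, keeping the scheme inside the ball where (i) and (iii) are uniformly available.

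To pass to the limit, I compare two approximants $c^N, c^{N'}$: assumption (i) combined with the discrete dynamics yields an estimate
\[
\|c_t^N - c_t^{N'}\|_E \leq \omega(N,N') + L\int_0^t \|c_s^N - c_s^{N'}\|_E\,\de s,
\]
with consistency error $\omega(N,N') \to 0$ coming from the piecewise-constant freezing of the state; Gronwall shows that $\{c^N\}$ is Cauchy in $C([0,T];E)$. The limit $c$ lies pointwise in the closed set $C$, satisfies the integral form of \eqref{Brezis_ODE}, and since (i)–(ii) and continuity of $c$ make $t\mapsto A(t,c_t)$ continuous, $c$ is of class $C^1$ and a classical solution. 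Uniqueness and the stability bound \eqref{Brezis_Estimate} follow from (i): for two solutions $c^1, c^2$ with data in the $\varrho$-ball, Gronwall applied to $\|c_t^1-c_t^2\|_E \leq \|\bar c^1-\bar c^2\|_E + L\int_0^t \|c_s^1-c_s^2\|_E\,\de s$ yields both, with constant $L=L(M,\varrho,T)$. I expect the main obstacle to be the invariance step, namely ensuring that the Euler iterates never leave $C$ when $A$ is merely $L^1$ in time — condition (iii) is a pointwise statement, while the scheme involves time averages — for which the mollification reduction outlined above is the standard remedy.
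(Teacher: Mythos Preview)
The paper does not prove Theorem~\ref{Brezis_cor}; it is merely \emph{recalled} from the literature, with explicit pointers to \cite[Corollary~2.3]{MS2020}, \cite[Theorem~1]{AAMS2022}, and the classical results of \cite[Section~I.3, Theorem~1.4, Corollary~1.1]{Brezis}. There is therefore no proof in the paper to compare your attempt against.

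That said, your outline follows the standard Brezis--Nagumo Euler-scheme strategy and is essentially the argument found in those references. One technical point worth flagging: in your invariance step you form $c_k^N + \theta\,\bar A_k^N$ with $\bar A_k^N$ a \emph{time average} of $A(\cdot,c_k^N)$, but condition (iii) only guarantees $c + \theta A(t,c)\in C$ for a fixed $t$, and there is no Banach-space mean-value theorem yielding $\bar A_k^N = A(t^*,c_k^N)$ for some $t^*$. The clean fix, in the continuous-in-$t$ case you reduce to, is to use the pointwise forward Euler step $c_{k+1}^N = c_k^N + h\,A(t_k,c_k^N)$ instead of the integrated one; then the convex-combination identity applies directly via (iii), and the consistency error is handled in the Cauchy estimate. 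Also note that the conclusion ``of class $C^1$'' as stated really requires $t\mapsto A(t,c)$ to be continuous (as it is in every application in the paper, where the system is autonomous); under bare $L^1$-in-time regularity one would only obtain an absolutely continuous solution satisfying the ODE almost everywhere.
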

%\begin{proof}
%Let us fix the initial datum $\overline{c}\in C$ and let us choose $\overline{R}\coloneqq (\norm{\overline{c}}_E+MT)e^{MT}$. Consider a smooth function with compact support $\chi\colon \mathbb{R}^+ \rightarrow [0,1]$ such that $\chi(r) = 1$ for every $r\leq \overline{R}$ and set $B(t,c)\coloneqq \chi(\norm{c}_E)A(t,c)$. Then one can see that $B$ satisfies hypotheses (i) and (ii) of Theorem \ref{Brezis_thm}. To see that hypothesis (iii) of Theorem~\ref{Brezis_thm} is also satisfied, it is suffices to notice that, by convexity and since $0\leq \chi \leq 1$, the point $c+\theta\chi(\norm{c}_E)A(t,c)$ belongs to $C$ whenever $c+\theta A(t,c)\in C$. Therefore there exists a unique solution $t\mapsto c(t)$ of class $C^1$ to
%\begin{equation}\label{Brezis_help}
%\det c_t = B(t,c_t)\quad\textrm{in }[0,T], \qquad c_0 = \overline{c}.
%\end{equation}
%Using again that $0\leq \chi\leq 1$ and assumption (iv), one can see that 
%\begin{equation*}
%\norm{c_t}_E \leq \norm{\overline{c}}_E + MT+M\int_0^T \norm{c_s}_E\,\de s,
%\end{equation*}
%hence, Gr\"{o}nwall's Lemma implies that $\norm{c_t}_E \leq \overline{R}$ for every $t\in [0,T]$. With this, $c_t$ solves \eqref{Brezis_ODE}. A similar argument shows that any other solution $t\mapsto \hat{c}_t$ to \eqref{Brezis_ODE} must satisfy $\norm{\hat{c}_t}_E \leq \overline{R}$ for every $t\in [0,T]$. Thus, uniqueness of solutions for \eqref{Brezis_ODE} follows from the uniqueness of solutions to \eqref{Brezis_help}. A similar argument also yields \eqref{Brezis_Estimate}.  
%\end{proof}

\section{Well-posedness of the entropic system}
\label{sec_decoupled}

In this section, we study the well-posedness of the $\varepsilon$-regularized entropic system \eqref{one1}; for convenience, in this section, we fix $\lambda=1$. 
We start by listing the assumptions on the velocity field $y\mapsto v_\Psi(y)$ and on the transfer map $y\mapsto \cR_\Psi^\varepsilon(y)\coloneqq \cT_\Psi(y)+\varepsilon\cH(\ell)$.
We assume that the velocity field $v_\Psi \colon Y\rightarrow \mathbb{R}^d$ satisfies the following conditions:
\begin{itemize}
\item[(v1)] for every $\varrho>0$, for every $\Psi\in\cP(B_\varrho^{Y})$, $v_\Psi\in \textrm{Lip}(B_\varrho^Y;\mathbb{R}^d)$ uniformly with respect to $\Psi$, namely there exists $L_{v,\varrho}>0$ such that
\begin{equation*}
\vert v_\Psi(y^1)-v_\Psi(y^2)\vert \leq L_{v,\varrho} \vert\vert y^1-y^2\vert\vert_{\overline{Y}}\,;
\end{equation*}
\item[(v2)] for every $\varrho>0$, there exists $L_{v,\varrho}>0$ such that for every $y\in B_\varrho^Y$\,, and for every $\Psi^1$, $\Psi^2\in\cP(B_\varrho^Y)$
\begin{equation*}
\vert v_{\Psi^1}(y)-v_{\Psi^2}(y)\vert\leq L_{v,\varrho} \mathcal{W}_1(\Psi^1,\Psi^2);
\end{equation*}
%\red{condensare (v1) e (v2) in un'unica? come (T2) sotto}
\item[(v3)] there exists $M_v>0$ such that for every $y\in Y$, and for every $\Psi\in \cP_1(Y)$ there holds
\begin{equation*}
\vert v_\Psi(y)\vert \leq M_v (1+\norm{y}_{\overline{Y}}+m_1(\Psi)).
\end{equation*}
\end{itemize}
We now describe the assumptions on $\cT$. For every $\Psi\in\cP_1(Y)$, let $\cT_\Psi\colon Y \to L^p(U,\eta)$ be an operator such that
\begin{itemize}
\item[(T1)] $\cT_\Psi(y)$ has zero mean for every  $(y,\Psi)\in Y\times \cP_1(Y) $\,: 
\begin{equation*}
\int_U \cT_\Psi(y)(u)\,\de\eta(u) = 0\,;
\end{equation*}
%\item[(T1)]for every $(y,\Psi)\in Y\times \cP_1(Y)$, there exists $M_\cT>0$ such that 
%\begin{equation*}
%\norm{\cT_\Psi(y)}_{L^p(U,\eta)}\leq M_\cT (1+\norm{y}_{\overline{Y}}+m_1(\Psi));
%\end{equation*}
\item[(T2)] for every $\varrho>0$ there exists $L_{\cT,\varrho}>0$ such that for every $(y^1,\Psi^1)$, $(y^2,\Psi^2)\in  B_\varrho^Y\times \cP(B_\varrho^Y)$
\begin{equation*}
\vert\vert\cT_{\Psi^1}(y^1)-\cT_{\Psi^2}(y^2)\vert\vert_{L^p(U,\eta)} \leq L_{\cT,\varrho} \big(\vert\vert y^1-y^2\vert\vert_{\overline{Y}}+\mathcal{W}_1(\Psi^1,\Psi^2)\big);
\end{equation*}
\item[(T3)] there exist a monotone increasing function $\omega\colon [0,+\infty)\to [0,+\infty)$\,, for which 
\begin{equation*}
\limsup_{s\rightarrow 0^+} \frac{\omega(s)}{s} \eqqcolon \underline{\omega}\in [0,+\infty) \qquad\textrm{and}\qquad \limsup_{s\rightarrow \infty} \frac{\omega(s)}{s} \eqqcolon \overline{\omega}\in [0,+\infty),
\end{equation*} 
and a constant $C_\cT>0$ such that for every $(y,\Psi)\in Y_{r,R}\times \cP_1(Y)$ (for some $0<r<1<R<+\infty$), 
\begin{equation*}
\cT_\Psi(y)(u)  \leq C_\cT \omega(R) \qquad\text{and}\qquad
(\cT_\Psi(y)(u))_-  \leq C_\cT \omega(\ell(u)),
\end{equation*}
for $\eta$-almost every $u\in U$.
\end{itemize}
Finally, the entropy functional $\cH\colon C_{0,\infty}\to L^0(U,\eta)$ that we consider is defined by %\red{\eqref{entropy_functional_intro}. MM: poi toglierei le formule, perché sono le stesse
\begin{equation*}%\label{entropy_functional}
\cH(\ell) \coloneqq \ell \big[I(\ell)-\log(\ell)\big],
\end{equation*}
where $I(\ell)$ is the negative entropy of the probability density $\ell$, namely
\begin{equation*}
I(\ell)\coloneqq \int_{U} \ell(u)\,\log(\ell (u)) \,\de\eta(u).
\end{equation*}
We notice that, for every $r,R\in(0,+\infty)$ and every $\ell\in C_{r,R}$, we have that $\cH(\ell)\in L^p(U,\eta)$ for every $p\in[1,+\infty]$.

\begin{remark}
\label{r:examples}
We remark that assumptions~$\mathrm{(v1)}$--$\mathrm{(v3)}$ already appeared in~\cite{AAMS2022, Mor1, MS2020} and in~\cite{Ambrosio, entropy} in a stronger form and are rather typical in the study of ODE systems. Conditions $\mathrm{(T1)}$--$\mathrm{(T3)}$, instead, are slightly different from the usual hypotheses on the operator $\mathcal{T}_{\Psi}$ introduced in~\cite[Section~3]{MS2020}. In particular, $\mathrm{(T3)}$ involves a pointwise condition on~$\mathcal{T}_{\Psi} (y)$, which is crucial to show existence and uniqueness of solutions to the $N$-particles system~\eqref{coup_entropic_problem_compact} below. The role played by such assumption is that of guaranteeing a pointwise control on the strategy~$\ell(u)$, ensuring a bound from above and from below away from~$0$. For more details, we refer to the proof of Proposition~\ref{regularity_properties_entropic_vector_field}.

Here, we report two fundamental examples that fall into our theoretical framework. The first one is the {\em replicator dynamics} (see also~\cite{Ambrosio, entropy}). If $\Psi \in \mathcal{P}( Y)$ stands for the distribution of players with mixed strategies $\ell' \in C_{0, \infty}$, the pay-off that a player in position~$x$ gets playing the strategy~$u\in U$ against all the other players writes
\begin{equation}
\label{e:JLambda}
\mathcal{J}_{\Psi}(x,u) = \int_{Y}\int_U J(x,u,x',u')\,\ell'(u')\,\de\eta(u') \, \de\Psi (x',\ell')
\end{equation}
and the corresponding operator~$\mathcal{T}$ is
\begin{equation*}
\cT_\Psi (x,\ell) = \left(\mathcal{J}_{\Psi} (x,\cdot) - \int_U \mathcal{J}_{\Psi} (x,u) \ell(u) \,\de\eta(u) \right)\ell\,.
\end{equation*}
In \cite[Proposition~5.8]{MS2020} sufficient conditions on~$J$ are provided, that imply conditions~$\mathrm{(T1)}$ and~$\mathrm{(T2)}$. If $J$ is bounded in $\mathbb{R}^{d} \times U \times \mathbb{R}^{d} \times U$, then $\mathcal{T}$ also satisfies~$\mathrm{(T3)}$.

The second example stems from population dynamics and models a leader-follower interactions (see~\cite[Sections 4 and 5]{MS2020}). We assume that $U= \{ 1, \ldots, H\}$ for some~$H \in \mathbb{N}$ denotes the set of possible labels within a population. Given a distribution~$\Psi \in \mathcal{P}(Y)$ of agents with labels $\ell \in L^{p} (U, \eta)$, for $h \neq k \in U$ we denote by $\alpha_{hk} (x, \Psi) \geq 0$ the rate of change from label $h$ to label~$k$ and set
\begin{equation}
\label{e:alphahh}
\alpha_{hh}(x, \Psi) \coloneqq \sum_{k \neq h} \alpha_{kh} (x, \Psi) \,.
\end{equation}
Since $\eta$ is supported on the whole of~$U$, we may identify $\ell \in L^{p} (U, \eta)$ with the vector~$(\ell_{1}, \ldots, \ell_{H})$. Hence, the operator~$\mathcal{T}_{\Psi}$ is defined by
\begin{displaymath}
(\mathcal{T}_{\Psi} (y))_{h} \coloneqq (\mathcal{Q}^{*} (x, \Psi) \ell)_{h} = -\alpha_{hh} (x, \Psi) \ell_{h} + \sum_{k \neq h} \alpha_{kh} (x, \Psi) \ell_{k}\,, 
\end{displaymath}
where the matrix~$\mathcal{Q}(x, \Psi)$ writes as
\begin{displaymath}
\mathcal{Q}(x, \Psi) \coloneqq \left( 
\begin{array}{cccc}
-\alpha_{11} (x, \Psi) & \alpha_{12} (x, \Psi) & \cdots & \alpha_{1H} (x, \Psi) \\
\alpha_{21} (x, \Psi) & -\alpha_{22} (x, \Psi) & \cdots & \alpha_{2H} (x, \Psi) \\
\vdots & \vdots & \ddots &\vdots \\
\alpha_{H1} (x, \Psi) & \alpha_{H2} (x, \Psi) & \cdots & -\alpha_{HH} (x, \Psi)
\end{array}\right).
\end{displaymath}
Suitable assumptions on~$\alpha_{kh}$ that ensure~$\mathrm{(T1)}$ and~$\mathrm{(T2)}$ are given in~\cite[Proposition~5.1]{MS2020}. Once again, if $\alpha_{kh}$ are bounded, we have~$\mathrm{(T3)}$ as well thanks to the precise structure~\eqref{e:alphahh}: in particular, the positivity of~$\alpha_{kh}$ for every $k\neq h$ is crucial to estimate $\big(\cT_{\Psi}(y)(u)\big)_-$ in terms of the sole $\ell(u)$.
\end{remark}

\begin{proposition}
\label{regularity_properties_entropic_vector_field}
Assume that $v_\Psi\colon Y \to \mathbb{R}^d$ satisfies $\mathrm{(v1)}$--$\mathrm{(v3)}$ and $\cT_\Psi\colon Y \to L^p(U,\eta)$ satisfies $\mathrm{(T1)}$--$\mathrm{(T3)}$. Then, for every $\varepsilon>0$ there exist $r_\varepsilon\in (0,1)$ and $R_\varepsilon\in(1,+\infty)$ such that -- setting $Y_\varepsilon\coloneqq Y_{r_\varepsilon,R_\varepsilon}$ -- for every $\Psi\in \cP_1(Y_\varepsilon)$, %with $\mathrm{spt}(\Psi)\subset Y_\varepsilon$\,, 
the vector field $b^\varepsilon_\Psi\colon Y_\varepsilon \rightarrow \overline{Y}$ defined as
\begin{equation}\label{entropic_vector_field}
b^\varepsilon_\Psi(y)\coloneqq \begin{pmatrix}
v_\Psi(y)\\ \cR^\varepsilon_\Psi(y)
\end{pmatrix}, \qquad \text{for every $y \in Y_\varepsilon$,}
\end{equation} 
satisfies the following properties:

\begin{enumerate}

\item for every $\varrho>0$, there exists $L_{\varepsilon, \varrho}>0$ such that for every $\Psi\in\cP(B_\varrho^{Y_\varepsilon})$, and for every $y^1,y^2\in B_\varrho^{Y_\varepsilon}$ 
\begin{equation}\label{coup_help2}
\vert\vert b_\Psi^\varepsilon(y^1)-b_\Psi^\varepsilon(y^2)\vert\vert_{\overline{Y}}\leq L_{\varepsilon,\varrho} \vert\vert y^1 - y^2 \vert\vert_{\overline{Y}}\,;
\end{equation}

\item for every $\varrho>0$, there exists $L_\varrho>0$ such that for every $\Psi^1,\Psi^2\in\cP(B_\varrho^{Y_\varepsilon})$, and for every $y\in B_\varrho^{Y_\varepsilon}$
\begin{equation}\label{coup_help3}
\vert\vert b_{\Psi^1}^\varepsilon(y) - b_{\Psi^2}^\varepsilon(y)\vert\vert_{\overline{Y}}\leq L_\varrho \mathcal{W}_1(\Psi^1,\Psi^2)\,;
\end{equation}

\item there exists $M_\varepsilon>0$ such that for every $y\in {Y_\varepsilon}$ and for every $\Psi\in\cP_1({Y_\varepsilon})$ %with $\mathrm{spt}(\Psi)\subset Y_\varepsilon$ 
there holds
\begin{equation}\label{coup_help4}
\vert\vert b_\Psi^\varepsilon(y) \vert\vert_{\overline{Y}}\leq M_\varepsilon\,(1+\norm{y}_{\overline{Y}} + m_1(\Psi))\,.
\end{equation}

\item %for every $\varrho>0$, there exists $\theta_{\varepsilon, \varrho} >0$ such that for every $y\in B_\varrho^{Y_\varepsilon}$ and for every $\Psi\in \cP(B_\varrho^{Y_\varepsilon})$
%\begin{equation}\label{coup_help1}
%y+\theta_{\varepsilon, \varrho} b^\varepsilon_\Psi(y)\in Y_\varepsilon\,.
%\end{equation}
there exists $\theta_{\varepsilon} >0$ such that for every $\varrho>0$ and for every $y\in B_\varrho^{Y_\varepsilon}$ and for every $\Psi\in \cP(B_\varrho^{Y_\varepsilon})$
\begin{equation}\label{coup_help1}
y+\theta_{\varepsilon} b^\varepsilon_\Psi(y)\in Y_\varepsilon\,.
\end{equation}

\end{enumerate} 
\end{proposition}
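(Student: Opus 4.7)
The plan is to first calibrate the constants $r_\varepsilon \in (0,1)$ and $R_\varepsilon \in (1,+\infty)$ so that the entropy term $\varepsilon\mathcal{H}(\ell)$ dominates $\mathcal{T}_\Psi(y)$ in sign on suitable neighborhoods of the two pointwise bounds of $\ell$; the four properties will then follow. The main obstacle is the sub-tangentiality condition~(4), which both drives the choice of $r_\varepsilon,R_\varepsilon$ and is the core of the argument.

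For the calibration at the lower end, I use the identity $\mathcal{H}(\ell)(u) = \ell(u)[I(\ell) - \log \ell(u)]$ together with $I(\ell) \geq 0$ (Jensen applied to $x \mapsto x\log x$ with $\int\ell\,\de\eta = 1$) and~(T3) to obtain
\[
\mathcal{R}^\varepsilon_\Psi(y)(u) \;\geq\; -C_\mathcal{T}\,\omega(\ell(u)) \,-\, \varepsilon\,\ell(u)\log \ell(u).
\]
Since $\omega(s)/s \to \underline{\omega} < \infty$ as $s \to 0^+$ while $-\log s \to +\infty$, choosing $r_\varepsilon$ small enough makes the right-hand side non-negative for $\ell(u) \in [r_\varepsilon, 2r_\varepsilon]$. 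For the upper end I need a quantitative lower bound on $\log R_\varepsilon - I(\ell)$: using the pointwise convexity estimate $s \log s \leq \alpha(s)\, r_\varepsilon \log r_\varepsilon + (1-\alpha(s))\, R_\varepsilon \log R_\varepsilon$ with $\alpha(s) = (R_\varepsilon - s)/(R_\varepsilon - r_\varepsilon)$ applied with $s = \ell(u)$ and integrated yields
\[
\log R_\varepsilon - I(\ell) \;\geq\; \frac{(R_\varepsilon - 1)\, r_\varepsilon \log(R_\varepsilon/r_\varepsilon)}{R_\varepsilon - r_\varepsilon}.
\]
Plugging this into $\mathcal{H}(\ell)(u) = \ell(u)[I(\ell)-\log\ell(u)]$ at $\ell(u) = R_\varepsilon$ and requiring the resulting negative contribution to beat $C_\mathcal{T}\,\omega(R_\varepsilon)$ forces a condition of the form $R_\varepsilon \geq r_\varepsilon \exp(c/(\varepsilon r_\varepsilon))$, achievable since $r_\varepsilon$ is now fixed. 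A small perturbation argument (replacing $R_\varepsilon$ by $R_\varepsilon - \delta$) extends the conclusion $\mathcal{R}^\varepsilon_\Psi(y)(u) \leq 0$ to $\ell(u) \in [R_\varepsilon-\delta_0, R_\varepsilon]$ for some $\delta_0>0$ depending only on $\varepsilon, r_\varepsilon, R_\varepsilon$.

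With $Y_\varepsilon = Y_{r_\varepsilon,R_\varepsilon}$ fixed, properties~(1)--(3) are essentially routine. For~(1), the Lipschitz bound in $y$ for the first component is~(v1); for the second it combines~(T2) with the Lipschitz character of the Nemytskii operator $\ell \mapsto \ell\log\ell$ (since $s\mapsto s\log s$ has bounded derivative on $[r_\varepsilon,R_\varepsilon]$) and of $\ell \mapsto \ell\,I(\ell)$ (since $I \colon C_{r_\varepsilon,R_\varepsilon} \to \mathbb{R}$ is Lipschitz, again through the Lipschitz constant of $s\log s$). Property~(2) is immediate from~(v2) and~(T2), since $\mathcal{H}$ does not depend on $\Psi$. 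For~(3), (T3) gives $|\mathcal{T}_\Psi(y)(u)| \leq C_\mathcal{T}\,\omega(R_\varepsilon)$ pointwise on $Y_\varepsilon$, and the explicit form of $\mathcal{H}$ yields an analogous pointwise bound on $Y_\varepsilon$; hence only the $v_\Psi$-contribution through~(v3) produces the $\norm{y}_{\overline{Y}} + m_1(\Psi)$ terms.

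For~(4), the integral constraint is preserved since $\int_U \mathcal{R}^\varepsilon_\Psi(y)\,\de\eta = 0$ (by~(T1) and the identity $\int_U \mathcal{H}(\ell)\,\de\eta = I(\ell)\int\ell\,\de\eta - I(\ell) = 0$). For the pointwise constraints, I split $[r_\varepsilon,R_\varepsilon]$ into three regions: on $[r_\varepsilon,2r_\varepsilon]$ one has $\mathcal{R}^\varepsilon_\Psi(y)(u)\ge 0$ by the calibration, so the lower constraint $\ell + \theta\mathcal{R}^\varepsilon_\Psi(y) \geq r_\varepsilon$ holds automatically; symmetrically, on $[R_\varepsilon-\delta_0, R_\varepsilon]$ the upper constraint holds. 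On the intermediate region $(2r_\varepsilon, R_\varepsilon-\delta_0)$ there is a strict buffer of size at least $r_\varepsilon$ below and $\delta_0$ above, and $|\mathcal{R}^\varepsilon_\Psi(y)(u)|$ is pointwise bounded on $Y_\varepsilon$ by a constant $M_\varepsilon$; hence $\theta_\varepsilon \leq \min(r_\varepsilon,\delta_0)/M_\varepsilon$ closes the argument. The whole difficulty is concentrated in the calibration step: without the sharp entropy-gap estimate coming from the pointwise convexity of $s\log s$, the naive inequality $I(\ell)\le \log R_\varepsilon$ produces no quantitative gap at the upper boundary and the conditions on $r_\varepsilon$ and $R_\varepsilon$ become incompatible for small $\varepsilon$.
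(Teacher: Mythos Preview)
Your proposal is correct and follows essentially the same strategy as the paper's proof: the same Jensen/convexity bounds $0\le I(\ell)\le \alpha_{r,R}\log r+(1-\alpha_{r,R})\log R$ on the entropy, the same calibration of $r_\varepsilon$ via $\limsup_{s\to 0^+}\omega(s)/s<\infty$ and of $R_\varepsilon$ via the entropy gap $\log R_\varepsilon-I(\ell)\ge \alpha_{r_\varepsilon,R_\varepsilon}\log(R_\varepsilon/r_\varepsilon)$, and the same three-region split (sign control near the boundaries, buffer plus uniform bound in the interior) for property~(4). The only cosmetic differences are your choice of $[r_\varepsilon,2r_\varepsilon]$ versus the paper's $[r_\varepsilon,\tfrac{4}{3}r_\varepsilon]$, and that the paper organizes the argument by first isolating the boundedness and Lipschitz continuity of $\mathcal{H}$ on $C_{r,R}$ as separate steps before assembling (1)--(4).
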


\begin{proof}
The proof is divided into three steps.\\ 
\noindent\emph{Step 1 (boundedness of $\cH$).}
We start by proving that $\mathcal{H}(C_{r, R})\subset L^\infty(U,\eta)$ for every $r, R \in (0, +\infty)$ with $r < 1 < R$, which in turn implies that for every $\varrho \in (0, +\infty)$, every $\Psi \in \mathcal{P} (B_{\varrho} ^{Y_{r, R}})$, and every $y \in Y_{r, R}$,  $\cR^\varepsilon_{\Psi} (y)$ is well defined in $L^{p}(U, \eta)$. 

For every $u\in U$ we may write $ \ell(u) = r \zeta(u) + R (1-\zeta(u))$, with $0\leq \zeta(u) \leq 1$\,. Thus, using the convexity of the function $t\mapsto t\log(t)$ in $(0, +\infty)$ we get 
\begin{equation*}
I(\ell) \leq r  \log(r ) \int_{U} \zeta(u)\,\de\eta(u)+ R \log(R)\,\int_{U}(1-\zeta(u))\,\de\eta(u)\,.
\end{equation*}
Since $\ell$ is a probability density it is straightforward to check that 
\begin{equation*}
\int_U \zeta(u) \, \de\eta(u) = \frac{R - 1}{R - r}\,.
\end{equation*} 
Therefore, 
\begin{equation}
\label{e:Iell}
I(\ell) \leq \frac{R - 1}{R - r}\,r  \log(r) + \left (1 - \frac{R - 1}{R - r}\right) R \log(R).
\end{equation}
To simplify the notation, we define
\begin{equation}\label{alpha_eps}
\alpha_{r, R} \coloneqq \frac{(R - 1) r}{R - r}\in (0,1)\,,
\end{equation}
so that inequality~\eqref{e:Iell} reads
\begin{equation}\label{entropy_upper_bound}
I(\ell) \leq \alpha_{r, R} \log(r) + (1 - \alpha_{r, R}) \log(R) \eqqcolon k_{r, R}\,.
\end{equation}
Moreover, by Jensen's inequality we have that
\begin{equation}\label{entropy_lower_bound}
I(\ell) \geq \int_{U} \ell(u)\,\de\eta(u)\,\log \left(\int_{U} \ell(u) \, \de\eta(u)\right) = 0\,.
\end{equation}
Since $\ell \in C_{r, R}$ and~\eqref{entropy_upper_bound} and~\eqref{entropy_lower_bound} hold, we deduce that
\begin{equation}
\label{e:2-bounds-H}
-R  \log(R) \leq \mathcal{H}(\ell) \leq R \,k_{r, R} + \frac{1}{e}\,,
\end{equation}
so that $\mathcal{H}(\ell)\in L^\infty(U,\eta)$. 

Since~$\mathcal{H}(\ell)$ has zero mean and~$({\rm T}1)$ holds true, we have that
\begin{equation}
\label{e:R-mean}
\int_{U} \mathcal{R}_{\Psi}^{\varepsilon}(y)(u) \, \de \eta(u) = 0\,.
\end{equation}

\noindent\emph{Step 2 (Lipschitz continuity of $\cH$).} 
We now show that $\mathcal{H}$ is Lipschitz continuous on~$C_{r, R}$ with Lipschitz constant~$L_{r, R}$ depending on~$r$ and~$R$. Since $t\mapsto t\log(t)$ is Lipschitz continuous on $[r , R]$ whenever $r > 0$ (we let~$L_{r, R}'$ be its Lipschitz constant), we may estimate for every $\ell_{1}, \ell_{2} \in C_{r, R}$ and every $u \in U$
%\begin{equation*}
%\begin{split}
%\sup_{u\in U}\frac{|\ell_1(u)\log(\ell_1(u))-\ell_2(u)\log(\ell_2(u))|}{|\ell_1(u)-\ell_2(u)|} &\leq \sup_{x_1,\,x_2\in [r_\varepsilon,R_\varepsilon]} \frac{|x_1\log(x_1)-x_2\log(x_2)|}{|x_1-x_2|} \\ & \leq  \sup_{x\in[r_\varepsilon,R_\varepsilon]} \vert\log(x)+1\vert\eqqcolon 
%\end{split}
%\end{equation*}
%and therefore
\begin{equation*}
\begin{split}
\vphantom{\int} \vert\mathcal{H}(\ell_1)(u) & - \mathcal{H}(\ell_2)(u) \vert \leq | I ( \ell_1) \ell_1(u) - I(\ell_2)\ell_2(u) | + |\ell_1(u)\log (\ell_1(u)) - \ell_2(u) \log(\ell_2(u)) | 
\\ 
&
\vphantom{\int} \leq | I(\ell_1) - I(\ell_2) | \, |\ell_1(u)| + | I(\ell_2)| \, |\ell_1(u) - \ell_2(u)| + L_{r, R}' |\ell_1(u) - \ell_2(u)| 
\\ 
&
\vphantom{\int} \leq  R  | I(\ell_1) - I(\ell_2) | + k_{r, R} |\ell_1(u) - \ell_2(u) | + L_{r, R}'  |\ell_1(u) - \ell_2(u) | 
\\ 
&
\leq R \int_U | \ell_1(u) \log(\ell_1(u)) - \ell_2(u) \log(\ell_2(u)) | \, \de \eta(u) + (k_{r, R} + L_{r, R}' ) |\ell_1(u) - \ell_2(u)| 
\\ 
&
\leq R L_{r, R}' \int_U  | \ell_1(u) - \ell_2(u) |\, \de \eta(u) + ( k_{r,R} + L_{r, R}' ) |\ell_1(u) - \ell_2(u)|\,.
\end{split}
\end{equation*}
Thus, there holds
\begin{equation}\label{e:Lipschitz-H}
\begin{split}
\norm{\mathcal{H}(\ell_1)-\mathcal{H}(\ell_2)}_{L^{p}(U, \eta)} & \leq R L_{r, R}'  \norm{\ell_1 - \ell_2}_{L^1(U,\eta)} + (k_{r, R} + L_{r, R}' )\,\norm{\ell_1-\ell_2}_{L^{p}(u, \eta)} 
\\ 
&
\leq  R L_{r, R}'  \norm{\ell_1 - \ell_2}_{L^{p}(U, \eta)} + (k_{r, R} + L_{r, R}' ) \norm{\ell_1 - \ell_2}_{L^{p}(U, \eta) } 
\\ 
&
= ((R+1) L_{r, R}' + k_{r, R} ) \norm{\ell_1 - \ell_2}_{L^{p}(U, \eta)} =: L_{r, R}  \norm{\ell_1 - \ell_2}_{L^{p}(U, \eta)}\,,
\end{split}
\end{equation}
where we have used that $\eta \in \mathcal{P}(U)$. 
%Regardless of $p$, the Lipschitz constant of $\mathcal{R}^\varepsilon$ in not greater than $L_\varepsilon=L_{\cT,R_\varepsilon}+R_\varepsilon\,\wp_\varepsilon + k_\varepsilon + \wp_\varepsilon$\,. 

\noindent\emph{Step 3 (proof of properties (1)--(4)).}
For $\varepsilon>0$, %we make a choice of~$0 < r_{\varepsilon} < 1 < R_{\varepsilon} <+\infty$. Namely, 
we fix~$r_\varepsilon \in (0, 1)$ such that
\begin{equation}\label{r_eps}
\varepsilon\,\log\left(\dfrac{3}{4\,r_\varepsilon}\right)\geq C_\cT\,\dfrac{\omega(\frac{4}{3}r_\varepsilon)}{r_\varepsilon} \,.
\end{equation}
Notice that, thanks to~$({\rm T}3)$, such~$r_{\varepsilon}$ exists as
\begin{equation*}
\limsup_{r \rightarrow 0^+} \,\varepsilon \log \left(\dfrac{3}{4 r} \right ) = +\infty \qquad \textrm{and}\qquad\limsup\limits_{r \rightarrow 0^+} \, C_\cT\, \dfrac{\omega(\frac{4}{3}r)}{r} = \frac{4}{3}\,C_\cT\,\underline{\omega} \,.
\end{equation*}
%Given~$r_{\varepsilon}$, we 
We now fix~$R_\varepsilon \in (1, +\infty)$ such that
\begin{equation}\label{R_eps}
\alpha_{r_{\varepsilon}, R_{\varepsilon}} \log\left(\frac{R_\varepsilon}{r_\varepsilon}\right) \geq \dfrac{2\,C_\cT\,\omega(R_\varepsilon)}{\varepsilon\,R_\varepsilon}.
\end{equation}
Again, notice that there exists at least one $R_\varepsilon>1$ satisfying~\eqref{R_eps} since, by~$({\rm T}3)$ and by definition of~$\alpha_{r, R}$ in~\eqref{alpha_eps}, it holds
\begin{equation*}
\limsup\limits_{R \rightarrow +\infty}\,\alpha_{r_{\varepsilon}, R} \log\left(\frac{R}{r_\varepsilon}\right) = +\infty \qquad \textrm{and}\qquad\limsup\limits_{R \rightarrow +\infty} \, \dfrac{2\,C_\cT\,\omega(R)}{\varepsilon\,R} = \dfrac{2\,C_\cT\,\overline{\omega}}{\varepsilon}\,.
\end{equation*}

For $r_{\eps}$ and $R_{\eps}$ given above, we now prove properties $(1)$--$(4)$. For simplicity, we set from now on $ C_{\varepsilon} \coloneqq C_{r_{\varepsilon}, R_{\varepsilon}}$, $Y_{\varepsilon} \coloneqq Y_{r_{\varepsilon}, R_{\varepsilon}}$, $\alpha_{\varepsilon} \coloneqq \alpha_{r_{\varepsilon}, R_{\varepsilon}}$, and $k_{\varepsilon} \coloneqq k_{r_{\varepsilon}, R_{\varepsilon}}$.

{\em Property (1).} Let $\varrho>0$, $\Psi \in \mathcal{P}(B_\varrho^{Y_{\varepsilon}})$, and $y^{1}, y^{2} \in B^{Y_{\varepsilon}}_{\varrho}$. By~$({\rm T}2)$, the operator~$\mathcal{T}_{\Psi}$ is Lipschitz continuous on~$B_{\varrho}^{Y_{\varepsilon}}$ with Lipschitz constant~$L_{\mathcal{T}, \varrho}>0$, while by~$({\rm v}1)$, $v_{\Psi}$ is Lipschitz continuous on~$B_{\varrho}^{Y_{\varepsilon}}$ with Lipschitz constant~$L_{v, \varrho}>0$. In view of the Lipschitz continuity of~$\mathcal{H}$ (cf.~\eqref{e:Lipschitz-H}), setting, for instance, $L_{\varepsilon, \rho}:= L_{v, \varrho} +  \max\{\varepsilon L_{r_{\varepsilon}, R_{\varepsilon}}, L_{\cT, \rho}\}$, we deduce~\eqref{coup_help2}.

{\em Property (2).} It is straightforward from~$({\rm v}2)$ and~$({\rm T}2)$, since the entropy regularization~$\mathcal{H}$ does not depend on~$\Psi \in \mathcal{P}(B^{Y_{\varepsilon}}_{\varrho})$.

{\em Property (3).} In view of~$({\rm v}3)$, it is enough to prove that there exists~$M_{\varepsilon}>0$ such that for every $y \in Y_{\varepsilon}$ and every $\Psi \in \mathcal{P}_{1}(Y_{\varepsilon})$
\begin{equation}
\label{e:R-sublinear}
\| \mathcal{R}^{\varepsilon}_{\Psi} (y)\|_{L^{p}(U, \eta)} \leq M_{\varepsilon} \big( 1 + \| y \|_{\overline{Y}} + m_{1} (\Psi) \big)\,.
\end{equation}
By~$({\rm T}3)$, we have that $| \mathcal{T} (y) (u) | \leq C_{\mathcal{T}} \omega (R_{\varepsilon})$. Recalling~\eqref{e:2-bounds-H} and setting 
$$
M_{\varepsilon} := C_{\mathcal{T}} \omega (R_{\varepsilon}) + \varepsilon \, \max \Big\{ R_{\varepsilon} \log R_{\varepsilon} \, , \,  R_{\varepsilon} \,k_{\varepsilon} + \frac{1}{e}\Big\}\,,
$$
we infer~\eqref{e:R-sublinear} and therefore~\eqref{coup_help4}.

{\em Property (4).} Let $\varrho>0$. Since $Y_{\varepsilon} = \R^{d} \times C_{\varepsilon}$, we only have to find~$\theta_\varepsilon$ such that for every $\Psi \in \mathcal{P}(B^{Y_{\varepsilon}}_{\varrho})$ and every $y = (x, \ell) \in B^{Y_{\varepsilon}}_{\varrho}$, 
\begin{equation}
\label{e:T4-R}
\ell + \theta_{\varepsilon} \mathcal{R}_{\Psi}^{\varepsilon} (x, \ell) \in C_{\varepsilon}\,.
\end{equation}  
In view of~\eqref{e:R-mean}, we already know that for any $\theta_{\varepsilon} > 0$
\begin{equation}
\label{e:mean=1}
\int_{U} \ell(u) + \theta_{\varepsilon} \mathcal{R}_{\Psi}^{\varepsilon} (x, \ell) (u) \, \de \eta(u) = 1\,.
\end{equation}
Hence, we have to show that upper and lower bounds of~$C_{\varepsilon}$ are preserved for a suitable choice of~$\theta_{\varepsilon}$ independent of~$y \in B^{Y_{\varepsilon}}_{\varrho}$ and of $\Psi \in \mathcal{P}(B_{\varrho}^{Y_{\varepsilon}})$. The precise~$\theta_{\varepsilon}$ will be specified along the proof. 

Let $y \in B^{Y_{\varepsilon}}_{\varrho}$ and $\Psi \in \cP(B_\varrho^{Y_\varepsilon})$. We start by imposing that for $\eta$-a.e.~$u\in U$ 
\begin{equation}
\label{e:to-show-1}
\ell(u) + \theta_\varepsilon\cR^\varepsilon(\ell)(u) \leq R_\varepsilon\,.
\end{equation}
Using $({\rm T}3)$ and~\eqref{entropy_upper_bound} we get that
\begin{equation}
\label{decoup_help1}
\begin{split}
\ell(u) & + \theta_\varepsilon\,\left[\mathcal{T}_{\Psi}(y)(u) + \varepsilon\,\mathcal{H}(\ell)(u)\right] \leq \ell(u) + \theta_\varepsilon\,\left[C_\cT\,\omega(R_{\varepsilon} ) + \varepsilon \mathcal{H} (\ell)(u) \right] 
\\ 
&
= \ell(u) + \theta_\varepsilon \left[C_\cT\,\omega(R_\varepsilon) + \varepsilon \ell(u) \left(I(\ell) - \log(\ell(u)) \right) \right] 
\\ 
&
\leq \ell(u) + \theta_\varepsilon \left[C_\cT\,\omega(R_\varepsilon) + \varepsilon\,\ell(u)\,(\alpha_\varepsilon\,\log(r_\varepsilon)+(1-\alpha_\varepsilon)\log(R_\varepsilon)-\log(\ell(u)))\right].
\end{split}
\end{equation}
Because of \eqref{R_eps} we have that
\begin{equation}
\label{e:limit_R}
\begin{split}
& \lim_{t\nearrow R_\varepsilon} [C_\cT \omega(R_\varepsilon)+\varepsilon t (\alpha_\varepsilon \log(r_\varepsilon)+(1-\alpha_\varepsilon)\log(R_\varepsilon)-\log t)] \\ 
=&\, C_\cT \omega(R_\varepsilon) - \varepsilon \alpha_\varepsilon R_\varepsilon \log\bigg(\frac{R_\varepsilon}{r_\varepsilon}\bigg) \leq -C_\cT\omega(R_\varepsilon) <0.
\end{split}
\end{equation}
Inequalities~\eqref{decoup_help1} and~\eqref{e:limit_R} imply that there exists $R_\varepsilon ' < R_\varepsilon$ such that
\begin{equation}
\label{e:first-estimate-ell}
\ell(u)  + \theta_\varepsilon\,\left[\mathcal{T}_{\Psi}(y)(u) + \varepsilon\,\mathcal{H}(\ell)(u)\right] \leq R_{\varepsilon} \qquad \text{ whenever $\ell(u)\in [R_\varepsilon ',R_\varepsilon]$.}
\end{equation}
If $\ell(u)\leq R_\varepsilon '$, by~$({\rm T}3)$ and by~\eqref{e:2-bounds-H} we estimate
\begin{equation}
\label{e:above_R}
\begin{split}
\ell(u) + \theta_\varepsilon  \cR^\varepsilon_{\Psi}(y)(u) &\leq R_\varepsilon ' + \theta_\varepsilon\,\left[C_\cT\,\omega(R_\varepsilon) +\varepsilon R_\varepsilon  k_\varepsilon + \frac{\varepsilon}{e}\right]\,.
\end{split}
\end{equation}
It follows from~\eqref{e:above_R} that there exists $\theta_\varepsilon^1 \in (0, +\infty)$ such that for every $\theta_{\varepsilon} \in (0, \theta_{\varepsilon}^{1}]$
\begin{equation}
\label{e:second-estimate-ell}
\ell(u)  + \theta_\varepsilon\,\left[\mathcal{T}_{\Psi}(y)(u) + \varepsilon\,\mathcal{H}(\ell)(u)\right] \leq R_{\varepsilon} \qquad \text{whenever $\ell(u) \in (r_{\varepsilon}, R'_{\varepsilon}]$}.
\end{equation}
Combining~\eqref{e:first-estimate-ell} and~\eqref{e:second-estimate-ell} we deduce the upper bound~\eqref{e:to-show-1} for $\theta_{\varepsilon} \in (0, \theta_{\varepsilon}^{1}]$.

We now show that, for a suitable choice of $\theta_{\varepsilon} \in (0, \theta^{1}_\varepsilon]$, we can as well guarantee 
\begin{equation}
\label{e:to-show-2}
\ell(u) + \theta_\varepsilon \cR_{\Psi}^\varepsilon(y)(u)\geq r_\varepsilon \qquad \text{$\eta$-a.e.~$u\in U $}\,.
\end{equation}
In fact, using $({\rm T}3)$ and~\eqref{entropy_lower_bound}
\begin{equation*}
\begin{split}
\ell(u) + \theta_\varepsilon \left[\cT_{\Psi}(y)(u) + \varepsilon \mathcal{H}(\ell)(u)\right] &\geq \ell(u) + \theta_\varepsilon\,\left[-C_\cT\,\omega(\ell(u))-\varepsilon\,\ell(u)\log(\ell(u))\right]
\end{split}.
\end{equation*}
If $\ell(u) \in \big( \frac{4}{3}\,r_\varepsilon, R_{\varepsilon} \big]$, by monotonicity of~$\omega$ we continue in the previous inequality with
\begin{equation}
\label{e:l-lower-r}
\begin{split}
\ell(u) + \theta_\varepsilon \left[\cT_{\Psi}(y)(u) + \varepsilon \mathcal{H}(\ell)(u)\right] &\geq \frac{4}{3}\,r_\varepsilon + \theta_\varepsilon\,\left[-C_\cT\,\omega(R_\varepsilon) - \varepsilon R_\varepsilon \log(R_\varepsilon)\right]\,.
\end{split}
\end{equation}
From inequality~\eqref{e:l-lower-r} we infer the existence of~$\theta_\varepsilon^2 \in (0, \theta_{\varepsilon}^{1} ]$ (depending only on~$r_{\varepsilon}$ and~$R_{\varepsilon}$) such that for every~$\theta_{\varepsilon} \in (0, \theta_{\varepsilon}^{2} ]$ it holds
\begin{equation}
\label{e:l-lower-r-2}
\ell(u) + \theta_\varepsilon \left[\cT_{\Psi}(y)(u) + \varepsilon \mathcal{H}(\ell)(u)\right] \geq r_{\varepsilon} \qquad \text{whenever $\ell(u) \in \Big(\frac{4}{3}r_{\varepsilon}, R_{\varepsilon} \Big]$.}
\end{equation}
If $\ell(u) \in \big[r_{\varepsilon},  \frac{4}{3}\,r_\varepsilon \big]$, instead, by~$({\rm T}3)$ and by the choice of~$r_{\varepsilon}$ in~\eqref{r_eps}, we estimate
\begin{equation}
\label{e:l-lower-r-3}
\begin{split}
\ell(u) + \theta_\varepsilon \left[\cT_{\Psi}(y)(u) + \varepsilon \mathcal{H}(\ell)(u)\right] &\geq \ell(u) + \theta_\varepsilon \ell(u)\left[-C_\cT\,\frac{\omega(\ell(u))}{\ell(u)} - \varepsilon \log(\ell(u))\right]
\\ 
&
\geq \ell(u) + \theta_\varepsilon \ell(u)\left[- C_\cT \frac{\omega\left(\frac{4}{3}\,r_\varepsilon\right)}{r_\varepsilon} + \varepsilon \log\left(\frac{3}{4\,r_\varepsilon}\right) \right] 
\\ 
&
\geq \ell(u) \geq r_{\varepsilon}\,,
\end{split}
\end{equation}
which concludes the proof of~\eqref{e:to-show-2} for $\theta_{\varepsilon} \in (0, \theta_{\varepsilon}^{2}]$. 

Combining~\eqref{e:mean=1}, \eqref{e:to-show-1}, and~\eqref{e:to-show-2}, we conclude that for every $\theta_{\varepsilon} \in (0, \theta_{\varepsilon}^{2}]$, for every $\Psi \in \mathcal{P}(B^{Y_{\varepsilon}}_{\varrho})$, and every $y = (x, \ell) \in B^{Y_{\varepsilon}}_{\varrho}$,~\eqref{e:T4-R} holds.
Notice, in particular, that~$\theta_\varepsilon$ is independent of~$\varrho$.
\end{proof}

From now on, whenever a choice of $r_{\varepsilon}$ and $R_{\varepsilon}$ is made according to Proposition~\ref{regularity_properties_entropic_vector_field}, the corresponding space $Y_{r_{\varepsilon},R_{\varepsilon}}$ will be denoted by $Y_{\varepsilon}$. Moreover, for any $N\in\mathbb{N}$, we will denote by $Y_{\varepsilon}^N\coloneqq (Y_{\varepsilon})^N$ the cartesian product of $N$ copies of $Y_{\varepsilon}$. Finally, we will consistently use the notation $b_{\Psi}^{\varepsilon}$ for the velocity field introduced in \eqref{entropic_vector_field}.

As a consequence of Theorem~\ref{Brezis_cor} and Proposition~\ref{regularity_properties_entropic_vector_field}, we obtain the following theorem.

%For every $\varepsilon > 0$ and $N \in \mathbb{N}$, we now consider a particle system of~$N$ agents evolving according to the ODE system 
%\begin{equation}\label{coup_entropic_problem_explicit}
%\begin{cases}
%\dot{x}^{i} (t)  = v_{\Lambda_t^N}(x^{i}(t),\ell^{i} (t) )\,, \\
%\dot{\ell}^{i} (t)  = \mathcal{R}^{\varepsilon}_{\Lambda_t^{N}} (x^{i} (t) ,\ell^{i} ( t) ) \,, \\
%(x^{i} (0) , \ell^{i} (0)) = (\bar{x}^{i}, \bar{\ell}^{i})\,,
%\end{cases} \qquad \text{for $i=1,\dots,N$, $t\in[0,T]$},
%\end{equation}
%where 

%Recalling that definition of $b^\varepsilon$ in \eqref{entropic_vector_field}, the evolution \eqref{coup_entropic_problem_explicit} can be written in compact form as 
%\begin{equation}\label{coup_entropic_problem_compact}
%\dot{y}^{\varepsilon,i}_t = b_{\Lambda_t^{\varepsilon,N}}^\varepsilon(y^{\varepsilon,i}_t)\qquad \textrm{for }i=1,\dots,N,\,\,t\in [0,T].
%\end{equation}
%We discuss the well-posedness of system \eqref{coup_entropic_problem_compact} for every choice of an initial datum $\overline{y}^{\,\varepsilon,i} = (\overline{x}^{\,\varepsilon,i}, \overline{\ell}^{\,\varepsilon,i})$, for $i=1,\dots,N$.

\begin{theorem}
\label{Entropic_Problem_theorem}
Let $v_\Psi\colon Y \to \mathbb{R}^d$ satisfy $\mathrm{(v1)}$--$\mathrm{(v3)}$ and let $\cT_\Psi\colon Y \to L^p(U,\eta)$ satisfy $\mathrm{(T1)}$--$\mathrm{(T3)}$; let $\varepsilon>0$ and let $r_{\varepsilon},R_{\varepsilon}$ be as in Proposition~\ref{regularity_properties_entropic_vector_field}. 
Then for any choice of initial conditions $\bar\by = (\bar{y}^{1}, \ldots, \bar{y}^{N}) \in Y_\varepsilon^{N}$, the system
\begin{equation}\label{coup_entropic_problem_compact}
\begin{cases}
\dot{y}^{i}_t = b_{\Lambda_t^{N}}^\varepsilon(y^{i}_t), \\
y^{i}_0=\bar y^{i},
\end{cases}
\qquad \text{for $i=1,\dots,N$, $t\in [0,T]$,}
\end{equation}
where $\Lambda_t^{N}\coloneqq \frac{1}{N}\sum_{i=1}^N \delta_{y^{i}_{t}}$ is the empirical measure associated with the system, has a unique solution $\by\colon[0,T]\to Y_{\varepsilon}^{N}$.  
Moreover, %for every $i=1,\dots,N$ and every $t \in [0, T]$ 
we have that
\begin{equation}\label{coup_solution_bound}
%\vert\vert y^{i} (t) \vert\vert_{\overline{Y}} \leq \left(\sum\limits_{i=1}^N\vert\vert \overline{y}^{i}\vert\vert_{\overline{Y}} + NMT\right)\,e^{(2M+\varepsilon\,h_\varepsilon)T}.
\sup_{\substack{i=1,\ldots,N\\t\in[0,T]}}\lVert y_t^i \rVert_{\overline{Y}} \leq \Big(\sup_{i=1,\ldots,N} \lVert \bar{y}^i \rVert_{\overline{Y}} + M_\varepsilon T\Big) e^{2M_\varepsilon T}.
\end{equation}
\end{theorem}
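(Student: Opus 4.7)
The plan is to recast \eqref{coup_entropic_problem_compact} as a single ODE on a convex closed subset of a Banach space and invoke Theorem~\ref{Brezis_cor}. Set $E := \overline{Y}^{N}$, endowed with the norm $\|\by\|_E := \sum_{i=1}^N \|y^i\|_{\overline{Y}}$, and take $C := Y_\varepsilon^N$, which is closed and convex since $C_\varepsilon$ is closed and convex in $L^p(U,\eta)$. Since the right-hand side of \eqref{coup_entropic_problem_compact} is autonomous, define $A \colon C \to E$ by
\[
A(\by) := \bigl( b^\varepsilon_{\Lambda^N}(y^1),\,\ldots,\,b^\varepsilon_{\Lambda^N}(y^N) \bigr),\qquad \Lambda^N := \tfrac{1}{N}\sum_{i=1}^N \delta_{y^i},
\]
so that \eqref{coup_entropic_problem_compact} reads $\dot{\by}_t = A(\by_t)$, $\by_0 = \bar{\by}$.

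\textbf{Verifying hypotheses (i)--(iv) of Theorem~\ref{Brezis_cor}.} The key input is Proposition~\ref{regularity_properties_entropic_vector_field} together with the elementary bound
\[
\mathcal{W}_1(\Lambda^{N,1},\Lambda^{N,2}) \;\leq\; \tfrac{1}{N}\sum_{i=1}^N \|y^{1,i} - y^{2,i}\|_{\overline{Y}}
\]
obtained from the coupling $\Pi := \frac{1}{N}\sum_i \delta_{(y^{1,i},y^{2,i})}$. For $\by^1,\by^2 \in C \cap \{\|\cdot\|_E \leq \varrho\}$, each $y^{k,i}$ belongs to $B^{Y_\varepsilon}_\varrho$ and the associated $\Lambda^{N,k}$ are supported in $B^{Y_\varepsilon}_\varrho$. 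Combining estimates (1) and (2) of Proposition~\ref{regularity_properties_entropic_vector_field} gives
\[
\|b^\varepsilon_{\Lambda^{N,1}}(y^{1,i}) - b^\varepsilon_{\Lambda^{N,2}}(y^{2,i})\|_{\overline{Y}} \leq L_{\varepsilon,\varrho} \|y^{1,i} - y^{2,i}\|_{\overline{Y}} + L_\varrho\, \mathcal{W}_1(\Lambda^{N,1},\Lambda^{N,2}),
\]
and summing over $i$ yields (i). Hypothesis (ii) is automatic as $A$ is time-independent and continuous. For (iii), property~(4) of Proposition~\ref{regularity_properties_entropic_vector_field} furnishes the same $\theta_\varepsilon > 0$ (independent of $\varrho$) such that $y^i + \theta_\varepsilon b^\varepsilon_{\Lambda^N}(y^i) \in Y_\varepsilon$ for every $i$, hence $\by + \theta_\varepsilon A(\by) \in C$. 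Finally, estimate~(3) combined with $m_1(\Lambda^N) = \frac{1}{N}\sum_j \|y^j\|_{\overline{Y}} \leq \|\by\|_E$ gives
\[
\|A(\by)\|_E \;\leq\; M_\varepsilon\bigl(N + 2\|\by\|_E\bigr),
\]
which is sublinear, yielding (iv). Theorem~\ref{Brezis_cor} then produces the unique $C^1$ solution $\by \colon [0,T] \to C$.

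\textbf{A priori bound \eqref{coup_solution_bound}.} Integrating the $i$-th equation and using property~(3) of Proposition~\ref{regularity_properties_entropic_vector_field},
\[
\|y^i_t\|_{\overline{Y}} \;\leq\; \|\bar{y}^i\|_{\overline{Y}} + M_\varepsilon \int_0^t \bigl(1 + \|y^i_s\|_{\overline{Y}} + m_1(\Lambda^N_s)\bigr)\,\de s.
\]
Set $M(t) := \max_{i} \|y^i_t\|_{\overline{Y}}$ and note $m_1(\Lambda^N_s) \leq M(s)$. Taking the maximum over $i$,
\[
M(t) \;\leq\; M(0) + M_\varepsilon t + 2M_\varepsilon \int_0^t M(s)\,\de s,
\]
and Grönwall's inequality gives $M(t) \leq (M(0) + M_\varepsilon T)\,e^{2 M_\varepsilon T}$, which is exactly \eqref{coup_solution_bound}.

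\textbf{Main difficulty.} There is no genuine obstacle once Proposition~\ref{regularity_properties_entropic_vector_field} is available: the proof is a packaging argument. The two mildly delicate points are the $\varrho$-independence of $\theta_\varepsilon$ in property~(4) (which is what permits the tangency condition on the entire $Y_\varepsilon^N$ rather than on successive balls) and the fact that the coupling bound on $\mathcal{W}_1$ between empirical measures turns the interaction through $\Lambda^N$ into a Lipschitz perturbation on $E$.
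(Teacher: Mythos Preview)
Your proof is correct and follows essentially the same route as the paper: recast the system as an ODE on the closed convex set $Y_\varepsilon^N \subset \overline{Y}^N$, verify hypotheses (i)--(iv) of Theorem~\ref{Brezis_cor} via Proposition~\ref{regularity_properties_entropic_vector_field} together with the coupling bound on $\mathcal{W}_1$ between empirical measures, and then derive the a~priori estimate by Gr\"onwall. The only cosmetic difference is your choice of the unnormalized norm $\sum_i \|y^i\|_{\overline{Y}}$ in place of the paper's averaged norm $\tfrac{1}{N}\sum_i \|y^i\|_{\overline{Y}}$, which merely shifts some factors of~$N$ into the constants.
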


\begin{proof}
We let $\by \coloneqq (y^{1},\dots, y^{N})\in Y_\varepsilon^N\subset \overline{Y}^N$, whose norm we define as
\begin{equation*}
\norm{\by}_{\overline{Y}^N}\coloneqq \frac{1}{N}\sum_{i=1}^N \vert\vert y^{i}\vert\vert_{\overline{Y}},
\end{equation*}
and we consider the associated empirical measure $\Lambda^{N} \coloneqq \frac{1}{N}\sum_{i=1}^N \delta_{y^{i}}$\,, which belongs to $\cP(B_R^{Y_\varepsilon})$ whenever $\boldsymbol{y}\in (B_R^{Y_\varepsilon})^N$. Consider the map $\bb^{\varepsilon,N} \colon Y_\varepsilon^N \to \overline{Y}^N$ whose components are defined through $b_i^{\varepsilon,N}(\by) \coloneqq b^\varepsilon_{\Lambda^N}(y^{i})$.
Then the Cauchy problem \eqref{coup_entropic_problem_compact} can be written as 
\begin{equation*}
\begin{cases}
\dot{\by}_t = \bb^{\varepsilon,N}(\by_t),\\ 
\by_0 =\bar{\by}.
\end{cases}
\end{equation*}

In order to apply Theorem~\ref{Brezis_cor} to the system above, we first notice that assumption (ii) is automatically satisfied since the system is autonomous. 
To see that the other assumptions are satisfied too, we fix a ball $B_R^{Y_\varepsilon^N}$ and notice that $B_R^{Y_\varepsilon^N}\subset \big(B_{NR}^{Y_\varepsilon}\big)^N$. 
Applying \eqref{coup_help1} with $\Psi = \Lambda^{N}$ to each component $y^{i}$ of $\by$, we get that assumption (iii) of Theorem~\ref{Brezis_cor} is satisfied with $\varrho = RN$. 
We now show that assumption (i) holds. 
Fix $\by_1,\by_2\in B_R^{Y_\varepsilon^N}$ and let $\Lambda^{N}_1$ and $\Lambda^{N}_2$ be the associated empirical measures. 
Recalling \eqref{W1}, we notice that 
\begin{equation*}
\mathcal{W}_1(\Lambda_1^{N},\Lambda_2^{N}) \leq \frac{1}{N} \sum\limits_{i=1}^N \lVert y^{i}_1-y^{i}_2\rVert_{\overline{Y}} = \lVert \by_1-\by_2 \rVert_{\overline{Y}^N}.
\end{equation*}
Therefore, by triangle inequality, \eqref{coup_help2}, and \eqref{coup_help3}, we obtain the estimate
\begin{equation*}
\begin{split}
\lVert \bb^{\varepsilon,N}(\by_1)- \bb^{\varepsilon, N}(\bb_2)\rVert_{\overline{Y}^N} &= \frac{1}{N}\sum_{i=1}^N \lVert b^\varepsilon_{\Lambda^N_1}(y^{i}_1)-b^\varepsilon_{\Lambda^N_2}(y^{i}_2) \rVert_{\overline{Y}} \\
& \leq  L_{NR}\,\mathcal{W}_1(\Lambda_1^{N},\Lambda_2^{N})+\frac{L_{\varepsilon,NR}}{N}\sum_{i=1}^N\lVert y_1^{i}-y^{i}_2 \rVert_{\overline{Y}} \\
& \leq (L_{NR}+L_{\varepsilon,NR}) \lVert \by_1-\by_2 \rVert_{\overline{Y}^N}\,.
%&\leq 2\,L_{\varepsilon,NR}\,\vert\vert \boldsymbol{y}^{\varepsilon}_1-\boldsymbol{y}^{\varepsilon}_2 \vert\vert_{\overline{Y}^N}
\end{split}
\end{equation*}
%for every $\boldsymbol{y}^\varepsilon_1$, $\boldsymbol{y}_2^\varepsilon\in B_R^{Y_\varepsilon^N}$. 
To see that also assumption (iv) of Theorem~\ref{Brezis_cor} holds, we apply \eqref{coup_help4}, upon noticing that $m_1(\Lambda^{N})=\norm{\by}_{\overline{Y}^N}$,
\begin{equation*}
\lVert \boldsymbol{b}^{\varepsilon,N}(\by) \rVert_{\overline{Y}^N} = \frac{1}{N}\sum_{i=1}^N \lVert b^\varepsilon_{\Lambda^{N}}(y^{i}) \rVert_{\overline{Y}} \leq \frac{M_\varepsilon}{N}\sum_{i=1}^N(1+\lVert y^{i} \rVert_{\overline{Y}}+m_1(\Lambda^{N})) = M_\varepsilon\,(1+2\lVert \by \rVert_{\overline{Y}^N}).
\end{equation*}
Existence and uniqueness of the solution to system \eqref{coup_entropic_problem_compact} follow now from Theorem~\ref{Brezis_cor}.

Finally, because of \eqref{coup_help4}, 
%and defining $h_\varepsilon\coloneqq \max\{R_\varepsilon \log(R_\varepsilon) ,R_\varepsilon k_{\varepsilon} + 1/e\}$ (see \eqref{e:2-bounds-H}), 
we have that 
%\begin{equation*}
%\begin{split}
%\lVert \by_t \rVert_{\overline{Y}^N} &\leq  \lVert \bar{\by} \rVert_{\overline{Y}^N}+\frac{1}{N}\sum_{i=1}^N\int_0^T \lVert b^\varepsilon_{\Lambda_s^{N}}(y^{i}_s) \rVert_{\overline{Y}}\,\de s 
%\\ 
%&\leq \lVert \bar{\by} \rVert_{\overline{Y}^N} +\frac{1}{N}\sum_{i=1}^N\int_0^T \big(\lVert b_{\Lambda_s^{N}}(y^{i}_s) \rVert_{\overline{Y}}+\varepsilon \lVert \cH(\ell_s^{i}) \rVert_{L^p(U,\eta)} \big)\,\de s 
%\\ 
%&\leq \lVert \bar{\by} \rVert_{\overline{Y}^N} +\frac{1}{N}\sum_{i=1}^N\int_0^T \big[M(1+\lVert y^{i}_s \rVert_{\overline{Y}}+m_1(\Lambda_s^{N}))+\varepsilon h_\varepsilon \lVert \ell_s^{i} \rVert_{L^p(U,\eta)}\big]\,\de s 
%\\ 
%&\leq \lVert \bar{\by} \rVert_{\overline{Y}^N} +\frac{1}{N}\sum_{i=1}^N \int_0^T \big[M(1+\lVert y^{i}_s \lVert_{\overline{Y}} + \lVert \by_s \rVert_{\overline{Y}^N})+\varepsilon h_\varepsilon \lVert y_s^{i} \rVert_{\overline{Y}}\big]\,\de s 
%\\ 
%& =  \lVert \bar{\by} \rVert_{\overline{Y}^N} +\int_0^T \big[M(1+2\lVert \by_s \rVert_{\overline{Y}^N})+\varepsilon h_\varepsilon \lVert \by_s \lVert_{\overline{Y}^N} \big]\,\de s 
%\\ 
%&= \lVert \bar{\by} \rVert_{\overline{Y}^N} + MT+(2M+\varepsilon h_\varepsilon)\int_0^T  \lVert \by_s \rVert_{\overline{Y}^N}\,\de s\,,
%\end{split}
%\end{equation*}
\begin{equation*}
\begin{split}
\lVert y_t^i \rVert_{\overline{Y}} &\leq  \lVert \bar{y}^i \rVert_{\overline{Y}}+\int_0^T \lVert b^\varepsilon_{\Lambda_s^{N}}(y^{i}_s) \rVert_{\overline{Y}}\,\de s \leq \lVert \bar{y}^i \rVert_{\overline{Y}} + \int_0^T \big[M_\varepsilon(1+\lVert y^{i}_s \rVert_{\overline{Y}}+m_1(\Lambda_s^{N}))\big]\,\de s 
\\ 
&\leq \lVert \bar{y}^i \rVert_{\overline{Y}} + \int_0^T \big[M_\varepsilon(1+\lVert y^{i}_s \lVert_{\overline{Y}} + \lVert \by_s \rVert_{\overline{Y}^N})\big]\,\de s 
\\
& \leq \sup_{j=1,\ldots,N} \lVert \bar{y}^j \rVert_{\overline{Y}} +\int_0^T \Big[M_\varepsilon\Big(1+2\sup_{j=1,\ldots,N}\lVert y_s^j \rVert_{\overline{Y}}\Big)  \Big]\,\de s .
\end{split}
\end{equation*}
Taking the supremum over $i=1,\ldots,N$ in the left-hand side and applying Gr\"{o}nwall's Lemma, we conclude that 
\begin{equation*}
\sup_{\substack{i=1,\ldots,N\\t\in[0,T]}}\lVert y_t^i \rVert_{\overline{Y}} \leq \Big(\sup_{i=1,\ldots,N} \lVert \bar{y}^i \rVert_{\overline{Y}} + M_\varepsilon T\Big) e^{2M_\varepsilon T},
\end{equation*}
which is \eqref{coup_solution_bound}.
\end{proof}

We state here a second existence and uniqueness result, which will be useful in the next section.

\begin{proposition}
\label{lagrangian_ODE_prop}
Let $v_\Psi\colon Y \to \mathbb{R}^d$ satisfy $\mathrm{(v1)}$--$\mathrm{(v3)}$ and let $\cT_\Psi\colon Y \to L^p(U,\eta)$ satisfy $\mathrm{(T1)}$--$\mathrm{(T3)}$; let $\varepsilon>0$ and let $r_{\varepsilon},R_{\varepsilon}$ be as in Proposition~\ref{regularity_properties_entropic_vector_field}. 
Let $\Lambda \in C^0([0,T]; (\cP_1(Y_\varepsilon), \mathcal{W}_1))$ and assume that there exists $\varrho>0$ such that $\Lambda_{t}\in\cP(B_{\varrho}^{Y_\varepsilon})$ for all $t\in [0,T]$. Then, for every $\bar{y}\in Y_\varepsilon$ the Cauchy problem
\begin{equation}
\label{lagrangian_ODE} 
\left\{
\begin{array}{ll}
\dot{y}_{t} = b^{\varepsilon}_{\Lambda_{t}} (y_{t})\,,\\
y_{0} = \bar{y}
\end{array}
\right.
\end{equation}
has a unique solution.
\end{proposition}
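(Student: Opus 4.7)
The plan is to apply Theorem~\ref{Brezis_cor} to the Banach space $E = \overline{Y}$, the closed convex subset $C = Y_\varepsilon = \R^d \times C_\varepsilon$ (product of $\R^d$ with the closed convex set $C_\varepsilon \subset L^p(U,\eta)$), and the time-dependent operator $A(t, y) \coloneqq b^\varepsilon_{\Lambda_t}(y)$. Each of the four hypotheses of Theorem~\ref{Brezis_cor} will be obtained almost immediately from Proposition~\ref{regularity_properties_entropic_vector_field}, using crucially the fact that, by assumption, every $\Lambda_t$ is supported on the single ball $B_\varrho^{Y_\varepsilon}$.

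For hypothesis~(i), given any $\varrho'>0$ I would set $\tilde\varrho \coloneqq \max(\varrho, \varrho')$, so that simultaneously $\Lambda_t \in \cP(B_{\tilde\varrho}^{Y_\varepsilon})$ and $y^1, y^2 \in B_{\tilde\varrho}^{Y_\varepsilon}$ for every $t \in [0,T]$. Property~(1) of Proposition~\ref{regularity_properties_entropic_vector_field} then delivers the Lipschitz constant $L_{\varepsilon, \tilde\varrho}$ uniformly in~$t$. For hypothesis~(iv), the sublinear bound follows from property~(3) and the elementary estimate $m_1(\Lambda_t) \leq \varrho$, giving
\begin{equation*}
\|b^\varepsilon_{\Lambda_t}(y)\|_{\overline{Y}} \leq M_\varepsilon \big(1 + \|y\|_{\overline{Y}} + m_1(\Lambda_t)\big) \leq M_\varepsilon (1 + \varrho) \big(1 + \|y\|_{\overline{Y}}\big).
\end{equation*}
Hypothesis~(iii) is essentially a restatement of property~(4), using the important observation (highlighted at the end of the proof of Proposition~\ref{regularity_properties_entropic_vector_field}) that $\theta_\varepsilon$ is independent of the radius of the support, so the same $\theta_\varepsilon$ serves for all $\varrho'>0$.

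The only point requiring a separate argument is hypothesis~(ii), the $L^1$-integrability of $t \mapsto b^\varepsilon_{\Lambda_t}(y)$. Here I would apply property~(2) of Proposition~\ref{regularity_properties_entropic_vector_field}: for every $s, t \in [0,T]$ and every fixed $y \in B_\varrho^{Y_\varepsilon}$,
\begin{equation*}
\|b^\varepsilon_{\Lambda_s}(y) - b^\varepsilon_{\Lambda_t}(y)\|_{\overline{Y}} \leq L_\varrho\, \mathcal{W}_1(\Lambda_s, \Lambda_t),
\end{equation*}
and the assumed continuity $\Lambda \in C^0([0,T]; (\cP_1(Y_\varepsilon), \mathcal{W}_1))$ then yields continuity of $t \mapsto b^\varepsilon_{\Lambda_t}(y)$ into $\overline{Y}$. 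In particular the map is Bochner measurable and bounded on the compact interval $[0,T]$, so it belongs to $L^1([0,T]; \overline{Y})$.

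Once all four hypotheses are verified, Theorem~\ref{Brezis_cor} directly gives existence and uniqueness of a $C^1$ curve $y\colon [0,T] \to Y_\varepsilon$ solving~\eqref{lagrangian_ODE}. I do not expect any genuine obstacle: the argument is essentially the single-agent analogue of the verification already carried out for the coupled system in the proof of Theorem~\ref{Entropic_Problem_theorem}, with the empirical measure $\Lambda^N_t$ replaced by the given external curve~$\Lambda_t$, which is why the uniform-support hypothesis $\Lambda_t \in \cP(B_\varrho^{Y_\varepsilon})$ plays exactly the role that $\|\by_t\|_{\overline{Y}^N} \leq R$ played there.
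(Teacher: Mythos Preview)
Your proposal is correct and follows exactly the approach of the paper, whose proof is the single sentence that the result is a direct application of Theorem~\ref{Brezis_cor} combined with Proposition~\ref{regularity_properties_entropic_vector_field}. You have simply spelled out the verification that the paper leaves implicit; the only minor slip is that in hypothesis~(ii) the point~$y$ should range over all of~$Y_\varepsilon$, so you should use $\tilde\varrho = \max(\varrho,\|y\|_{\overline{Y}})$ there just as you did for~(i).
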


\begin{proof}
The result follows by a direct application of Theorems~\ref{Brezis_cor} and~\ref{regularity_properties_entropic_vector_field}, as this time the field~$b_{\Lambda_{t}}^{\varepsilon}$ is fixed. 
%We set $A(t,y)\coloneqq b^\varepsilon_{\Psi^\varepsilon_t}(y)$. Since $t\mapsto \Psi^\varepsilon_t$ is continuous, using \eqref{coup_help3} we get that, for any fixed $y\in Y_\varepsilon$, $A(\cdot, y)$ is continuous. Moreover, $A(t,\cdot)$ is locally Lipschitz because of \eqref{coup_help2} with the Lipschitz constant independent of $t$ because $\Psi_t\in \cP(B_R^{Y_\varepsilon})$ for all $t\in [0,T]$. For the same reason, by \eqref{coup_help4}, we have the sub-linear growth of $A$\,: 
%\begin{equation*}
%\vert\vert A(t,y) \vert\vert_{\overline{Y}} \leq  M_\varepsilon\,(1+\vert\vert y\vert\vert_{\overline{Y}} + m_1(\Psi_t)) \leq M_\varepsilon\,(1+\vert\vert y\vert\vert_{\overline{Y}} + R).
%\end{equation*}
%Similarly, from \eqref{coup_help1}, we have that condition (iii) of Theorem~\ref{Brezis_cor} is satisfied, so that existence and uniqueness of the solution follow directly from Theorem~\ref{Brezis_cor}.
\end{proof}

In view of the previous result, the following definition is justified. 

\begin{definition}
\label{transition_map}
    Let $\varepsilon>0$, let $r_{\varepsilon},R_{\varepsilon}$ be as in Proposition~\ref{regularity_properties_entropic_vector_field}, let $\varrho>0$, and let $\Lambda \in C([0, T]; (\cP_{1} (Y_{\varepsilon}); \mathcal{W}_{1}))$ be such that $\Lambda_{t} \in \cP( B^{Y_{\varepsilon}}_{\varrho})$ for every $t \in [0, T]$. We define the transition map $\boldsymbol{\mathrm{Y}}_{\Lambda}(t,s,\bar{y})$ associated with the ODE~\eqref{lagrangian_ODE} as
\begin{equation}
\boldsymbol{\mathrm{Y}}_{\Lambda}(t, s, \bar{y}) := y_t \,,
\end{equation}
where $t\mapsto y_t $ is the unique solution to~\eqref{lagrangian_ODE}  where we have replaced the initial condition by $y_s = \bar{y}$.
\end{definition}

\section{Mean-field limit}\label{ELS}

In this section we aim at passing to the mean-field limit as $N\rightarrow \infty$ in system~\eqref{coup_entropic_problem_compact}. Along the whole section, we fix~$\varepsilon>0$, $r_{\varepsilon} \in (0, 1)$, and $R_{\varepsilon} \in (1, +\infty)$ as in Theorem~\ref{Entropic_Problem_theorem}. As it is customary in the study mean-field limits of particles systems, we look at the limit of the empirical measure $\Lambda_t^{N} = \frac{1}{N} \sum_{i=1}^{N} \delta_{y^{i}_{t}}$ associated to a solution $\by\colon [0, T] \to Y_{\varepsilon}^{N}$ of system~\eqref{coup_entropic_problem_compact}. In Theorem~\ref{Entropic_Main} we will show that, under suitable assumptions on the initial conditions, the sequence of curves $t \mapsto \Lambda^{N}_{t}$ converges to a curve $\Lambda \in C( [0, T] ; (\mathcal{P}_{1}(Y_{\varepsilon}) ; \mathcal{W}_{1}))$ solution to the continuity equation
\begin{equation}\label{continuity_equation}
\partial_t \Lambda_t + \mathrm{div}(b^\varepsilon_{\Lambda_t}\,\Lambda_t) = 0\,.
\end{equation}
We start by recalling the definition of Eulerian solution to~\eqref{continuity_equation}.

\begin{definition}
\label{d:eulerian}
Let $\bar{\Lambda} \in \cP_1 (Y_\varepsilon)$. We say that $\Lambda \in C^0([0,T];(\cP_1(Y_\varepsilon),\mathcal{W}_1))$ is an Eulerian solution to equation~\eqref{continuity_equation} with initial datum~$\bar{\Lambda}$ if~$\Lambda_{0} = \bar{\Lambda}$ and for every $\phi \in C^1_b([0,T]\times \overline{Y})$ it holds
\begin{equation}\label{continuity_equation_dual}
\int_{Y_\varepsilon}\phi(t,y)\,\de\Lambda_t (y) - \int_{Y_\varepsilon}\phi(0,y)\,\de\Lambda_0 (y) = \int_0^t \int_{Y_\varepsilon}(\partial_t\phi(s,y) + D\phi(s,y)\cdot b^\varepsilon_{\Lambda}(y))\,\de\Lambda_s (y)\,\de s,
\end{equation}
where $D\phi(s,y)$ is the Fr\'echet differential of~$\phi$ in the $y$-variable.
\end{definition}

The main result of this section is an existence and uniqueness result of Eulerian solutions to~\eqref{continuity_equation} and its characterization as the mean-field limit of the particles system~\eqref{coup_entropic_problem_compact}.

\begin{theorem}
\label{Entropic_Main}
Let $\varrho>0$ and $\bar{\Lambda}\in \cP(B_{\varrho}^{Y_\varepsilon})$ be a given initial datum. Then, the following facts hold:
\begin{enumerate}
\item there exists a unique Eulerian solution $\Lambda \in C([0, T]; (\cP_{1} (Y_{\varepsilon}); \mathcal{W}_{1}))$ to \eqref{continuity_equation} with initial datum $\bar{\Lambda}$;
\item if $\bar{\by}_{N} := (\bar{y}^{1}_{N}, \ldots, \bar{y}^{N}_{N}) \in Y^{N}_{\varepsilon}$ satisfies $\| \bar{y}^{i}_{N}\|_{\overline{Y}} \leq \varrho$ for every $i = 1, \ldots, N$ and every $N \in \mathbb{N}$ and $\bar{\Lambda}^{N} := \frac{1}{N}\sum_{i=1}^N\delta_{\bar{y}_{i,N}} \in \cP(B_r^{Y_\varepsilon})$ is such that
\begin{equation*}
\lim_{N\rightarrow \infty} \mathcal{W}_1(\bar{\Lambda} , \bar{\Lambda}^{N}) = 0\,,
\end{equation*}
then the corresponding sequence of empirical measures~$\Lambda_{t}^{N}$ associated to the system~\eqref{coup_entropic_problem_compact} with initial data $\bar{y}^{i}_{N}$ fulfill
\begin{equation*}
\lim_{N\rightarrow\infty} \mathcal{W}_1 (\Lambda_t,\Lambda_t^{N}) = 0 \qquad\textrm{uniformily with respect to}\,\,t\in[0,T].
\end{equation*}
\end{enumerate}
\end{theorem}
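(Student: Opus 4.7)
The plan is to reduce the Eulerian formulation to a fixed-point problem for the flow provided by Proposition~\ref{lagrangian_ODE_prop}. Given a candidate curve $\Lambda \in C([0,T];(\cP_1(Y_\varepsilon), \mathcal{W}_1))$ with $\Lambda_0=\bar\Lambda$, I would set
\begin{equation*}
\Phi(\Lambda)_t \coloneqq \big(\boldsymbol{\mathrm{Y}}_\Lambda(t,0,\cdot)\big)_{\#} \bar\Lambda, \qquad t\in[0,T],
\end{equation*}
and check, by a push-forward computation against any $\phi \in C^1_b([0,T]\times\overline{Y})$ and differentiation in time along the flow, that $\Phi(\Lambda)$ solves~\eqref{continuity_equation_dual} with the frozen drift $b^\varepsilon_{\Lambda_t}$; consequently the fixed points of $\Phi$ are exactly the Eulerian solutions of~\eqref{continuity_equation} with initial datum $\bar\Lambda$.

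The first step is to obtain a uniform support bound so that the local Lipschitz constants of Proposition~\ref{regularity_properties_entropic_vector_field} become uniform. Since $\mathrm{supp}(\bar\Lambda)\subset B^{Y_\varepsilon}_\varrho$, the Gr\"onwall argument behind~\eqref{coup_solution_bound}, now applied to~\eqref{lagrangian_ODE} via~\eqref{coup_help4}, produces a radius $R_0 = R_0(\varrho,T,M_\varepsilon)$ with $\mathrm{supp}(\Phi(\Lambda)_t) \subset B^{Y_\varepsilon}_{R_0}$ for every $t\in[0,T]$, provided $\mathrm{supp}(\Lambda_t)\subset B^{Y_\varepsilon}_{R_0}$. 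Thus $\Phi$ leaves invariant the closed convex set
\begin{equation*}
K \coloneqq \big\{\Lambda \in C([0,T];\cP_1(Y_\varepsilon)) : \Lambda_0=\bar\Lambda,\ \mathrm{supp}(\Lambda_t)\subset B^{Y_\varepsilon}_{R_0}\ \forall\,t \in [0,T]\big\},
\end{equation*}
on which the constants in~\eqref{coup_help2}--\eqref{coup_help3} may be chosen uniform in $\Lambda$.

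The core is a Gr\"onwall comparison of two flows on $K$. For $\Lambda^1,\Lambda^2 \in K$, starting from the Duhamel identities
\begin{equation*}
\boldsymbol{\mathrm{Y}}_{\Lambda^i}(t,0,\bar y) = \bar y + \int_0^t b^\varepsilon_{\Lambda^i_s}\big(\boldsymbol{\mathrm{Y}}_{\Lambda^i}(s,0,\bar y)\big)\,\de s,
\end{equation*}
adding and subtracting $b^\varepsilon_{\Lambda^1_s}(\boldsymbol{\mathrm{Y}}_{\Lambda^2}(s,0,\bar y))$, and applying~\eqref{coup_help2}--\eqref{coup_help3} together with Gr\"onwall, one obtains
\begin{equation*}
\sup_{\bar y\in B^{Y_\varepsilon}_\varrho}\bignorm{\boldsymbol{\mathrm{Y}}_{\Lambda^1}(t,0,\bar y) - \boldsymbol{\mathrm{Y}}_{\Lambda^2}(t,0,\bar y)}_{\overline{Y}} \leq C_T \int_0^t \mathcal{W}_1(\Lambda^1_s,\Lambda^2_s)\,\de s,
\end{equation*}
whence $\sup_{t\in[0,T]}\mathcal{W}_1(\Phi(\Lambda^1)_t,\Phi(\Lambda^2)_t) \leq C_T\, T\, \sup_{t\in[0,T]}\mathcal{W}_1(\Lambda^1_t,\Lambda^2_t)$. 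Choosing $T_0$ with $C_{T_0}T_0<1$ makes $\Phi$ a contraction on the restriction of $K$ to $[0,T_0]$, and iterating on $[T_0,2T_0],\ldots$ yields a unique fixed point on $[0,T]$, which is the sought-after Eulerian solution. The same comparison, but coupling the flows through an optimal plan between two different initial data $\bar\Lambda,\bar\Lambda'$, gives the stability bound
\begin{equation*}
\sup_{t\in[0,T]}\mathcal{W}_1(\Lambda_t,\Lambda'_t) \leq C'_T\,\mathcal{W}_1(\bar\Lambda,\bar\Lambda').
\end{equation*}

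For part~(2), the solution $\by$ of~\eqref{coup_entropic_problem_compact} generates the empirical curve $\Lambda^N_t = \frac{1}{N}\sum_i \delta_{y^i_t}$, which by direct inspection satisfies $\Lambda^N_t = (\boldsymbol{\mathrm{Y}}_{\Lambda^N}(t,0,\cdot))_{\#}\bar\Lambda^N$, so that $\Lambda^N$ is itself an Eulerian solution with datum $\bar\Lambda^N$. By~\eqref{coup_solution_bound} the supports of all $\Lambda^N$ lie in a common ball (possibly enlarging $R_0$), so the stability estimate applies with a common constant and gives $\sup_{t\in[0,T]}\mathcal{W}_1(\Lambda_t,\Lambda^N_t) \leq C'_T\,\mathcal{W}_1(\bar\Lambda,\bar\Lambda^N) \to 0$. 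The main technical obstacle I expect is the careful verification that \emph{every} Eulerian solution in the sense of Definition~\ref{d:eulerian} coincides with the push-forward along its own flow (i.e.\ the identification of fixed points of $\Phi$ with Eulerian solutions, not just one inclusion); in the separable Banach-space setting of $\overline{Y}$ this is a version of the superposition principle and would be carried out following the pattern already developed in~\cite{MS2020} for the entropy-free case.
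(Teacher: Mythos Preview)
Your proposal is correct and the ingredients (uniform support bound, Gr\"onwall comparison of flows, stability via optimal couplings, superposition for Eulerian uniqueness) are exactly those of the paper. The route to \emph{existence}, however, is genuinely different. You set up the map $\Phi(\Lambda)_t = (\boldsymbol{\mathrm{Y}}_\Lambda(t,0,\cdot))_{\#}\bar\Lambda$ and run a Banach fixed-point argument on short time intervals; the paper instead never invokes a contraction but builds the solution as a limit. It first proves the stability estimate~\eqref{coup_help14} for any pair of \emph{Lagrangian} solutions (your ``coupling through an optimal plan'' computation), then observes that the empirical measures $\Lambda^N$ produced by Theorem~\ref{Entropic_Problem_theorem} are themselves Lagrangian solutions; stability makes $(\Lambda^N)_N$ a Cauchy sequence in $C([0,T];(\cP_1(Y_\varepsilon),\mathcal{W}_1))$, and its limit is shown to be Lagrangian. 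This has the advantage that existence and the mean-field approximation (part~(2)) are obtained in a single stroke, with no need to verify invariance of a set $K$ or to iterate on subintervals. Your approach, on the other hand, is more self-contained and does not require constructing an approximating particle system to prove existence. In both cases the passage from ``unique Lagrangian solution'' to ``unique Eulerian solution'' rests on the superposition principle (Theorem~\ref{Superposition} in the paper, your acknowledged ``main technical obstacle''), so neither argument avoids that step.
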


Before proving existence of an Eulerian solution, we briefly discuss its uniqueness. 
This result is a consequence of the following superposition principle (see~\cite[Theorem~3.11]{MS2020} and~\cite[Theorem~5.2]{Ambrosio}).

\begin{theorem}[Superposition principle]
\label{Superposition} 
Let $(E,\norm{\cdot}_{E})$ be a separable Banach space, let $b\colon (0,T)\times E \to E$ be a Borel vector field, and let $\mu \in C ([0, T];  \cP(E)) $ be such that 
\begin{equation}
\label{e:hp-supprin}
\int_0^T \int_E \norm{b_t}_E\,\de\mu_t\,\de t < +\infty \,.
\end{equation}
If $\mu$ is a solution to the continuity equation
\begin{equation*}
\partial_{t} \mu_t + \mathrm{div}(b_t\,\mu_t) = 0
\end{equation*}
in duality with cylindrical functions $\phi\in C^1_b(E)$, then there exists $\boldsymbol{\eta}\in \cP(C([0,T];E))$ concentrated on absolutely continuous solutions to the Cauchy problems
$$
\left\{\begin{array}{ll}
\dot{\gamma} = b_t(\gamma)\,,\\
\gamma_{0} \in {\rm spt} \mu_{0}
\end{array} \right.
$$
and with $(\mathrm{ev}_t)_\#\boldsymbol{\eta}=\mu_t$ for all $t\in [0,T]$, where $\mathrm{ev}_{t} \colon C([0, T]; E) \to E$ is the evaluation map at time~$t$, defined as $\mathrm{ev}_{t} (\gamma):= \gamma(t)$ for every $\gamma \in C([0, T]; E)$.
\end{theorem}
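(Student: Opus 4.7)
The plan is to reduce to the classical Euclidean superposition principle (stated for $\R^{d}$) by projecting $\mu$ and $b$ onto an exhausting sequence of finite-dimensional subspaces of $E$, and then to extract a limit measure on $C([0,T]; E)$ via a tightness argument on path space.

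\textbf{Finite-dimensional reduction.} Exploiting separability of~$E$, choose continuous linear projections $\pi_{n} \colon E \to E_{n}$ onto finite-dimensional subspaces~$E_{n}$, with $\pi_{n} \to \mathrm{id}_{E}$ pointwise and uniformly bounded operator norms. Set $\mu_{t}^{n} := (\pi_{n})_{\#} \mu_{t}$ and define the projected vector field $b^{n}_{t} \colon E_{n} \to E_{n}$ via conditional expectation along the fibers of $\pi_n$:
\begin{equation*}
b_{t}^{n}(x) := \int_{\pi_{n}^{-1}(x)} \pi_{n}(b_{t}(y)) \, \de \mu_{t,x}^{n}(y),
\end{equation*}
where $\mu_{t,x}^{n}$ is the disintegration of $\mu_{t}$ with respect to~$\pi_{n}$. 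Testing the continuity equation for~$\mu$ against cylindrical functions $\phi \circ \pi_{n}$ with $\phi \in C^{1}_{b}(E_{n})$ yields that $(\mu^{n}, b^{n})$ solves the continuity equation in~$E_{n}$, and Jensen's inequality together with~\eqref{e:hp-supprin} preserves the integrability assumption in the finite-dimensional setting.

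\textbf{Euclidean superposition and tightness.} Apply the classical Ambrosio superposition principle in~$E_{n} \cong \R^{d_{n}}$ to obtain $\boldsymbol{\eta}^{n} \in \cP(C([0,T]; E_{n}))$, concentrated on absolutely continuous curves solving $\dot\gamma = b^{n}_{t}(\gamma)$, with $(\mathrm{ev}_{t})_{\#} \boldsymbol{\eta}^{n} = \mu_{t}^{n}$. Viewing each $\boldsymbol{\eta}^{n}$ as an element of $\cP(C([0,T]; E))$ via the inclusion $E_{n} \hookrightarrow E$, tightness of $\{\boldsymbol{\eta}^{n}\}$ on $C([0,T]; E)$ follows from two ingredients: (a)~tightness of the time marginals, since $\mu_{t}^{n}$ converges weakly to $\mu_{t}$ for each $t$; and (b)~a uniform modulus of absolute continuity for paths, obtained by integrating $\| \gamma_{t} - \gamma_{s} \|_{E} \leq \int_{s}^{t} \| b_{r}^{n}(\gamma_{r}) \|_{E} \, \de r$ against $\boldsymbol{\eta}^{n}$ and applying Fubini to obtain the bound $\int_{s}^{t} \int_{E} \|b_{r}\|_{E} \, \de \mu_{r}\,\de r$, which is globally controlled by~\eqref{e:hp-supprin}. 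An Aldous/Arzel\`a--Ascoli criterion on path space then provides compactness.

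\textbf{Identification of the limit.} Extract a subsequential weak limit $\boldsymbol{\eta}^{n} \rightharpoonup \boldsymbol{\eta}$ in $\cP(C([0,T]; E))$; the identity $(\mathrm{ev}_{t})_{\#} \boldsymbol{\eta} = \mu_{t}$ follows by continuity of $\mathrm{ev}_{t}$ under weak convergence. The main obstacle is to show that $\boldsymbol{\eta}$ is concentrated on integral curves of~$b$: one must pass to the limit in the formulation $\gamma_{t} = \gamma_{0} + \int_{0}^{t} b_{s}(\gamma_{s}) \, \de s$, which holds $\boldsymbol{\eta}^{n}$-a.s.\ with $b$ replaced by $b^{n}$. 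The delicate point is that $b$ is only Borel, so the natural route is a Lusin-type approximation of~$b$ in $L^{1}(\de t \otimes \de \mu_{t}; E)$ by continuous cylindrical vector fields compatible with the projections, combined with a martingale-convergence argument showing that $b_{t}^{n} \circ \pi_{n} \to b_{t}$ in $L^{1}(\mu_{t})$. The compatibility between the Lusin approximation, the conditional expectations, and a Chebyshev-type estimate controlling the exceptional sets where the $L^{1}$-approximation is poor is the technically heaviest part of the argument; it ultimately yields a set of full $\boldsymbol{\eta}$-measure on which the integral ODE is satisfied.
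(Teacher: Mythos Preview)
The paper does not prove this theorem: it is quoted as a known result, with references to \cite[Theorem~3.11]{MS2020} and \cite[Theorem~5.2]{Ambrosio}, and is used only as a black box in the uniqueness argument of Theorem~\ref{thm:uniqueness}. So there is no in-paper proof to compare against; what you have written is an attempt to reprove a theorem the authors simply cite.

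Your outline follows the standard strategy in the literature (finite-dimensional reduction, Euclidean superposition, tightness on path space, identification of the limit), and that strategy is indeed how the cited results are proved. However, two steps as stated contain genuine gaps. First, your opening move assumes the existence of uniformly bounded finite-rank linear \emph{projections} $\pi_n$ converging pointwise to the identity on~$E$. This is the bounded approximation property, which fails for some separable Banach spaces (Enflo). The actual proofs in the cited references bypass this by embedding~$E$ into~$\mathbb{R}^{\infty}$ via a countable separating family in~$E^{*}$ and using the coordinate projections there; the curves are then recovered in~$E$ a~posteriori. Second, your tightness argument in step~(b) produces only an $L^{1}(\boldsymbol{\eta}^{n})$-bound on the increment $\|\gamma_{t}-\gamma_{s}\|_{E}$, not a uniform modulus of continuity, and in an infinite-dimensional~$E$ boundedness plus equicontinuity does not give compactness in $C([0,T];E)$ anyway. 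The cited proofs handle this either by working in the product topology of~$\mathbb{R}^{\infty}$ (where pointwise relative compactness is automatic) or by a more careful Prokhorov-type argument with a quantitative control of path measures on sublevel sets of the action. With these two fixes your sketch matches the literature.
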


The following uniqueness result holds.

\begin{theorem}
\label{thm:uniqueness}
Let $\bar{\Lambda} \in \cP_{1} (Y_{\varepsilon})$ and assume that $\Lambda \in C([0, T]; ( \cP(Y_{\varepsilon}); \mathcal{W}_{1}))$ is a solution to~\eqref{continuity_equation} with initial condition~$\Lambda_{0} = \bar{\Lambda}$. Then, $\Lambda$ is the unique solution to~\eqref{continuity_equation} with the same initial value.
\end{theorem}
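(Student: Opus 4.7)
The strategy is to reduce the uniqueness statement for the nonlinear continuity equation to the uniqueness of the associated characteristic ODE, by means of the Superposition Principle (Theorem~\ref{Superposition}). Let $\Lambda^1,\Lambda^2$ be two solutions of \eqref{continuity_equation} with the common initial datum $\bar{\Lambda}$. The first task is to verify hypothesis \eqref{e:hp-supprin} of the superposition principle for each $\Lambda^j$. Using the sublinear bound \eqref{coup_help4} we have
\begin{equation*}
\int_{Y_\varepsilon}\|b^\varepsilon_{\Lambda^j_t}(y)\|_{\overline{Y}}\,\de\Lambda^j_t(y)\leq M_\varepsilon\big(1+2\,m_1(\Lambda^j_t)\big),
\end{equation*}
so we only need to control $m_1(\Lambda^j_t)$ uniformly on $[0,T]$. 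This follows by testing \eqref{continuity_equation_dual} against smooth increasing truncations of $\|y\|_{\overline{Y}}$, applying again \eqref{coup_help4}, and closing via Gr\"onwall, yielding $m_1(\Lambda^j_t)\leq (m_1(\bar{\Lambda})+M_\varepsilon T)e^{2M_\varepsilon T}$.

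Once \eqref{e:hp-supprin} is established, Theorem~\ref{Superposition} yields $\boldsymbol{\eta}^j\in\cP(C([0,T];\overline{Y}))$ concentrated on absolutely continuous solutions of the non-autonomous ODE $\dot\gamma_t = b^\varepsilon_{\Lambda^j_t}(\gamma_t)$ with $(\mathrm{ev}_t)_\#\boldsymbol{\eta}^j=\Lambda^j_t$. By the Lipschitz estimate \eqref{coup_help2} (which is uniform on bounded subsets of $Y_\varepsilon$), the non-autonomous field $b^\varepsilon_{\Lambda^j_\cdot}(\cdot)$ enjoys ODE uniqueness for each initial datum; more precisely, one may invoke Proposition~\ref{lagrangian_ODE_prop} after first reducing, by an approximation argument, to the case where $\bar{\Lambda}\in\cP(B^{Y_\varepsilon}_\varrho)$ for some $\varrho$ and the uniform bound on the characteristics forces $\Lambda^j_t$ to stay in a common ball of $Y_\varepsilon$. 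This ODE uniqueness forces $\boldsymbol{\eta}^j$ to be the push-forward of $\bar{\Lambda}$ through the solution map, so $\Lambda^j_t = (\boldsymbol{\mathrm{Y}}_{\Lambda^j}(t,0,\cdot))_\#\bar{\Lambda}$ in the sense of Definition~\ref{transition_map}.

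The final step is a Gr\"onwall comparison of the two transition maps $T^j_t(\bar y):=\boldsymbol{\mathrm{Y}}_{\Lambda^j}(t,0,\bar y)$. For $\bar y \in \mathrm{spt}(\bar{\Lambda})$, the a priori bound guarantees $T^j_t(\bar y)\in B^{Y_\varepsilon}_{\varrho'}$ for some $\varrho'$ independent of $j$ and $t$. Integrating the ODE and using \eqref{coup_help2}--\eqref{coup_help3},
\begin{equation*}
\|T^1_t(\bar y)-T^2_t(\bar y)\|_{\overline{Y}}\leq \int_0^t\Big(L_{\varepsilon,\varrho'}\|T^1_s(\bar y)-T^2_s(\bar y)\|_{\overline{Y}} + L_{\varrho'}\,\mathcal{W}_1(\Lambda^1_s,\Lambda^2_s)\Big)\de s.
\end{equation*}
The coupling $(T^1_t,T^2_t)_\#\bar{\Lambda}$ between $\Lambda^1_t$ and $\Lambda^2_t$ gives $\mathcal{W}_1(\Lambda^1_t,\Lambda^2_t)\leq \psi(t)$ with $\psi(t):=\int \|T^1_t(\bar y)-T^2_t(\bar y)\|_{\overline{Y}}\,\de\bar{\Lambda}(\bar y)$. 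Integrating the previous display against $\bar{\Lambda}$, we obtain $\psi(t)\leq (L_{\varepsilon,\varrho'}+L_{\varrho'})\int_0^t\psi(s)\,\de s$, and Gr\"onwall's lemma gives $\psi\equiv 0$, hence $T^1_t=T^2_t$ $\bar{\Lambda}$-a.e.\ and $\Lambda^1=\Lambda^2$.

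The main obstacle is the transition from Superposition to ODE uniqueness when $\bar{\Lambda}$ is merely in $\cP_1(Y_\varepsilon)$ rather than compactly supported: the Lipschitz constants in Proposition~\ref{regularity_properties_entropic_vector_field} depend on a bounding radius~$\varrho'$, so one either truncates $\bar{\Lambda}$ to $\bar{\Lambda}\res B^{Y_\varepsilon}_R$, performs the above argument on each truncated datum (whose support is uniformly bounded on $[0,T]$), and passes to the limit $R\to\infty$ by stability of the Wasserstein distance, or argues characteristic-by-characteristic, tracking the $\bar y$-dependent radius $\varrho'=\varrho'(\|\bar y\|_{\overline{Y}},m_1(\bar{\Lambda}),T)$ and using a dominated-convergence style argument when integrating the Gr\"onwall estimate against $\bar{\Lambda}$.
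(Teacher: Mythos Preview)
Your approach is essentially the same as the paper's: both reduce uniqueness of the nonlinear continuity equation to uniqueness of the characteristic ODE via the Superposition Principle (Theorem~\ref{Superposition}), using the sublinear bound~\eqref{coup_help4} to verify~\eqref{e:hp-supprin}.

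A few comparative remarks. First, your Gr\"onwall argument to bound $m_1(\Lambda^j_t)$ is unnecessary: since by hypothesis $\Lambda\in C([0,T];(\cP_1(Y_\varepsilon),\mathcal{W}_1))$ and $m_1$ is $1$-Lipschitz with respect to~$\mathcal{W}_1$, the map $t\mapsto m_1(\Lambda_t)$ is continuous on the compact interval $[0,T]$, hence bounded. This is exactly what the paper does, and it is cleaner. Second, the paper's proof stops immediately after deducing ODE uniqueness and simply declares ``this yields the uniqueness of~$\Lambda$''; your explicit Gr\"onwall comparison of the two transition maps $T^j_t$ (leading to $\psi(t)\le (L_{\varepsilon,\varrho'}+L_{\varrho'})\int_0^t\psi(s)\,\de s$) spells out what the paper leaves implicit --- this is precisely the stability estimate~\eqref{coup_help14} that the paper establishes later, in Step~1 of the proof of Theorem~\ref{Entropic_Main}. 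Third, you correctly identify the issue that the Lipschitz constants in Proposition~\ref{regularity_properties_entropic_vector_field} require $\Lambda_t\in\cP(B^{Y_\varepsilon}_{\varrho'})$, which is not automatic for general $\bar\Lambda\in\cP_1(Y_\varepsilon)$. The paper does not address this point in the proof of Theorem~\ref{thm:uniqueness}; however, the only place the theorem is invoked (Step~3 of Theorem~\ref{Entropic_Main}) assumes $\bar\Lambda\in\cP(B^{Y_\varepsilon}_\varrho)$, where the issue disappears. Your proposed workarounds (truncation, or trajectory-wise radii) are reasonable if one wants the statement in the generality asserted.
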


\begin{proof}
Uniqueness of~$\Lambda$ follows from Theorems~\ref{Superposition} and~\ref{Entropic_Problem_theorem}. Indeed, we notice that by continuity of~$t \mapsto \Lambda_{t}$ there exists finite
\begin{displaymath}
M:= \max_{t \in [0, T]} \, m_{1}(\Lambda_{t}) <+\infty\,.
\end{displaymath}
Hence, setting~$b_{t}: = b_{\Lambda_{t}}$ we have by~\eqref{coup_help4} that
\begin{align*}
\int_{0}^{T} \int_{Y} \| b_{t} (y) \|_{L^{p}(U, \eta)} \, \de \Lambda_{t} (y) \, \de t & \leq \int_{0}^{T} \int_{Y} M_{\varepsilon} (1 + \| y \|_{\overline{Y}} + M) \, \de \Lambda_{t} (y) \, \de t 
\\
&
\leq M_{\varepsilon} + 2 M M_{\varepsilon} < +\infty\,,
\end{align*}
which is precisely~\eqref{e:hp-supprin}. Since $L^{p} (U, \eta)$ is a separable Banach space, we may apply Theorem~\ref{Superposition} and deduce that there exists $\boldsymbol{\eta} \in \mathcal{P} (C([0, T]; \overline{Y})$ concentrated on solutions to the Cauchy problem
\begin{equation}
\label{e:Cauchy-super}
\left\{
\begin{array}{ll}
\dot{y}_{t} = b^{\varepsilon}_{\Lambda_{t}} (y_{t})\,,\\[2mm]
y_{0} \in {\rm spt} (\bar{\Lambda})\,,
\end{array}
\right.
\end{equation}
and such that $\Lambda_{t} = ({\rm ev}_{t})_{\#} \boldsymbol{\eta}$ for $t \in [0, T]$. As $\bar{\Lambda} \in \cP_{1} (Y_{\varepsilon})$, Theorem~\ref{Entropic_Problem_theorem} implies that for any initial condition~$y_{0} \in {\rm spt} (\bar{\Lambda})$ system~\eqref{e:Cauchy-super} admits a unique solution. This yields the uniqueness of~$\Lambda$.
\end{proof}

In order to prove existence of a Eulerian solution $\Lambda$ to~\eqref{continuity_equation}, we need to pass through the notion of Lagrangian solution, which we recall below (see also~\cite[Definition~3.3]{CCR2011}).

\begin{definition}
\label{d:lagrangian}
Let $\bar{\Lambda} \in\cP_{1}(Y_\varepsilon)$ be a given initial datum. We say that $\Lambda \in C^0( [0,T] ; (\cP_{1} (Y_\varepsilon) ; \mathcal{W}_1))$ is a {\em Lagrangian solution} to \eqref{continuity_equation}  with initial datum $\bar{\Lambda}$ if it satisfies 
\begin{equation}
\Lambda_t = \boldsymbol{\mathrm{Y}}_{\Lambda}(t,0,\cdot)_\#\bar{\Lambda} \qquad\textrm{for every $t \in [0, T]$},
\end{equation}
where $\boldsymbol{\mathrm{Y}}_{\Lambda}(t,s,\bar{y})$ are the transition maps associated with the ODE~\eqref{lagrangian_ODE}.
\end{definition}

\begin{remark}
\label{coup_help18}
Recalling the definition of push-forward measure, it can be directly proven that Lagrangian solutions are also Eulerian solutions. 
%Indeed, for every $\phi\in C^1_b([0,T]\times \overline{Y})$
%\begin{equation}\label{Lagrangian_measure}
%\begin{split}
%\det\int_{Y_\varepsilon}\phi&(t,y)\,\de\Lambda_t^\varepsilon(y) = \det\int_{Y_\varepsilon}\phi(t,\boldsymbol{\mathrm{Y}}^\varepsilon_{\Lambda^\varepsilon}(t,0,\overline{y}))\,\de\overline{\Lambda}^{\,\varepsilon}(\overline{y}) \\ &= \int_{Y_\varepsilon} \left[\partial_t \phi(t,\boldsymbol{\mathrm{Y}}^\varepsilon_{\Lambda^\varepsilon}(t,0,\overline{y}))+ D\phi(t,\boldsymbol{\mathrm{Y}}^\varepsilon_{\Lambda^\varepsilon}(t,0,\overline{y}))\cdot b_{\Lambda_t^\varepsilon}^\varepsilon(\boldsymbol{\mathrm{Y}}^\varepsilon_{\Lambda^\varepsilon}(t,0,\overline{y}))\right]\,\de\overline{\Lambda}^{\,\varepsilon}(\overline{y})\\ &= \int_{Y_\varepsilon} \left(\partial_t\phi(t,y)+D\phi(t,y)\cdot b^\varepsilon_{\Lambda^\varepsilon_t}(y)\right)\,\de\Lambda_t^\varepsilon(y)
%\end{split} 
%\end{equation}
%which, upon integrating, coincides with \eqref{continuity_equation_dual}.
\end{remark}

We first need the following lemma.
\begin{lemma}
\label{l:4.7}
Let $v_\Psi\colon Y \rightarrow \mathbb{R}^d$ satisfy $\mathrm{(v1)}$--$\mathrm{(v3)}$ and let $\cT_\Psi\colon Y \rightarrow L^{p}(U, \eta)$ satisfy $\mathrm{(T1)}$--$\mathrm{(T3)}$. 
Let~$\delta>0$, let $\bar{\Lambda} \in \cP(B_\delta^{Y_\varepsilon})$, % be such that $\mathrm{spt}(\bar{\Lambda}) \subseteq B_{\delta}^{Y_{\varepsilon}}$. 
and assume that $\Lambda \in C^0([0,T];(\cP_1(Y_\varepsilon),\mathcal{W}_1))$ is a Lagrangian solution to~ \eqref{continuity_equation} with initial datum $\bar{\Lambda}$. Then, there exists~$\varrho \in (0, +\infty)$ only depending on~$\varepsilon$,~$\delta$, and~$T$ such that
\begin{equation*}
\Lambda_{t}\in \cP(B_{\varrho}^{Y_\varepsilon})\qquad\textrm{for  every $t\in [0,T]$.}
\end{equation*}
\end{lemma}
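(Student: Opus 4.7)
The strategy is a two-step Gr\"onwall argument based on the sublinear bound~\eqref{coup_help4} for $b_\Psi^\varepsilon$, exploiting the Lagrangian identity $\Lambda_t=\boldsymbol{\mathrm{Y}}_\Lambda(t,0,\cdot)_\#\bar\Lambda$ to first control the first moment $m_1(\Lambda_t)$ and then to upgrade this into a uniform bound on the trajectories starting in $\spt\bar\Lambda$.

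\emph{Step 1 (control of the first moment).} I would integrate the ODE~\eqref{lagrangian_ODE} against $\bar\Lambda$. By the change of variables formula and the Lagrangian characterization,
\begin{equation*}
m_1(\Lambda_t)=\int_{Y_\varepsilon}\|\boldsymbol{\mathrm{Y}}_\Lambda(t,0,\bar y)\|_{\overline Y}\,\de\bar\Lambda(\bar y)\leq \delta+\int_0^t\!\int_{Y_\varepsilon}\|b^\varepsilon_{\Lambda_s}(\boldsymbol{\mathrm{Y}}_\Lambda(s,0,\bar y))\|_{\overline Y}\,\de\bar\Lambda(\bar y)\,\de s.
\end{equation*}
Applying~\eqref{coup_help4} under the integral sign and using once more $\Lambda_s=\boldsymbol{\mathrm{Y}}_\Lambda(s,0,\cdot)_\#\bar\Lambda$ to identify the inner integral with $m_1(\Lambda_s)$, I obtain
\begin{equation*}
m_1(\Lambda_t)\leq \delta+M_\varepsilon t+2M_\varepsilon\int_0^t m_1(\Lambda_s)\,\de s,
\end{equation*}
whence Gr\"onwall's lemma gives $M:=\sup_{t\in[0,T]}m_1(\Lambda_t)\leq(\delta+M_\varepsilon T)\,e^{2M_\varepsilon T}$, a quantity depending only on $\varepsilon$, $\delta$, $T$.

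\emph{Step 2 (uniform control of trajectories and conclusion).} Fix $\bar y\in\spt\bar\Lambda$, so that $\|\bar y\|_{\overline Y}\leq\delta$. Using~\eqref{coup_help4} pointwise and the bound from Step 1,
\begin{equation*}
\|\boldsymbol{\mathrm{Y}}_\Lambda(t,0,\bar y)\|_{\overline Y}\leq \delta+M_\varepsilon T(1+M)+M_\varepsilon\int_0^t\|\boldsymbol{\mathrm{Y}}_\Lambda(s,0,\bar y)\|_{\overline Y}\,\de s,
\end{equation*}
and a second Gr\"onwall application yields $\|\boldsymbol{\mathrm{Y}}_\Lambda(t,0,\bar y)\|_{\overline Y}\leq \varrho:=(\delta+M_\varepsilon T(1+M))\,e^{M_\varepsilon T}$ for every $t\in[0,T]$ and every $\bar y\in\spt\bar\Lambda$. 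Since $\Lambda_t$ is the push-forward of $\bar\Lambda$ under $\boldsymbol{\mathrm{Y}}_\Lambda(t,0,\cdot)$, its support is contained in the closure of $\boldsymbol{\mathrm{Y}}_\Lambda(t,0,\spt\bar\Lambda)$, and $B_\varrho^{Y_\varepsilon}$ is closed in $\overline Y$ (being the intersection of a closed norm-ball with the closed set $\mathbb{R}^d\times C_\varepsilon$, cf.\ Section~\ref{FS}). Therefore $\Lambda_t\in\cP(B_\varrho^{Y_\varepsilon})$ for all $t\in[0,T]$, with $\varrho$ depending only on $\varepsilon$, $\delta$, and $T$, as claimed.

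\emph{Main obstacle.} There is no serious technical difficulty: the proof is essentially a tandem Gr\"onwall argument, and the key structural point is that the sublinear estimate~\eqref{coup_help4} decouples the dependence on the evaluation point $y$ from the dependence on the measure $\Psi$, so that the first-moment bound can be closed independently of any support information and then fed back into the pointwise bound on individual trajectories.
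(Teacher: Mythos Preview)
Your argument is correct and rests on the same ingredient as the paper's proof, namely the sublinear bound~\eqref{coup_help4} combined with Gr\"onwall. The only organisational difference is that the paper observes directly that $m_1(\Lambda_t)\leq\sup_{\bar y\in B_\delta^{Y_\varepsilon}}\|\boldsymbol{\mathrm{Y}}_\Lambda(t,0,\bar y)\|_{\overline Y}$ and then runs a \emph{single} Gr\"onwall argument on this supremum, arriving at $\varrho=(\delta+M_\varepsilon T)e^{2M_\varepsilon T}$; you instead close the moment bound first and then bound individual trajectories, which costs a second Gr\"onwall and yields a slightly larger explicit $\varrho$. Either route is perfectly fine; yours has the mild advantage that the first-moment estimate is isolated as a standalone step, while the paper's is marginally shorter.
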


\begin{proof}
It suffices to show that there exists~$\varrho \in (0, +\infty)$ such that
\begin{equation}
\label{e:Gronwall-Y}
\max\limits_{y\in B_{\delta}^{Y_\varepsilon}}\norm{\boldsymbol{\mathrm{Y}}_{\Lambda}(t,0,y)}_{\overline{Y}} \leq \varrho \qquad \text{for every $t\in [0,T]$.}
\end{equation}
We first observe that by definition of Lagrangian solutions and the fact that $\bar{\Lambda} \in \cP (B_{\delta}^{Y_\varepsilon})$, we immediately have 
\begin{equation}
\label{coup_help11}
m_1(\Lambda_t)\leq \max_{y\in B_{\delta}^{Y_\varepsilon}}\norm{\boldsymbol{\mathrm{Y}}_{\Lambda} (t,0,y)}_{\overline{Y}} \qquad \text{for every $t\in [0,T]$.}
\end{equation}
Arguing as in Theorem~\ref{Entropic_Problem_theorem}, by definition of the transition map, by~\eqref{coup_help4}, and by~\eqref{coup_help11}, for every $y \in B^{Y_{\varepsilon}}_{\delta}$ we have that
\begin{equation*}
\begin{split}
\norm{\boldsymbol{\mathrm{Y}}_{\Lambda}(t,0,y)}_{\overline{Y}}&\leq \delta + M_\varepsilon \int_0^T(1+\norm{\boldsymbol{\mathrm{Y}}_{\Lambda^\varepsilon}(s,0,y)}_{\overline{Y}}+m_1(\Lambda_s^\varepsilon))\,\de s
\\ 
&
\leq  \delta + M_\varepsilon \int_0^T \Big( 1 + 2 \max_{y \in B^{Y_{\varepsilon}}_{\delta}} \norm{\boldsymbol{\mathrm{Y}}_{\Lambda}(s,0,y)}_{\overline{Y}} \Big)\,\de s\,.
\end{split}
\end{equation*}
By Gr\"{o}nwall inequality we deduce that~\eqref{e:Gronwall-Y} holds true with $\varrho =(\delta + M_{\varepsilon} T ) e^{2M_{\varepsilon}T}$. 
\end{proof}

We are now in a position to prove Theorem~\ref{Entropic_Main}.

\begin{proof}[Proof of Theorem \ref{Entropic_Main}]
The structure of the proof follows step by step that of~\cite[Theorem~3.5]{MS2020} (see also~\cite[Theorem~4.1]{Ambrosio}). We report it here briefly for the reader convenience, underlying the use of different function spaces. In particular, we notice that closed and bounded subsets of $L^{p}(U, \eta)$ are not compact, which does not allow us to apply Ascoli-Arzel\`a Theorem in combination to Theorem~\ref{Entropic_Problem_theorem} to obtain a mean-field limit result.

The proof goes through a finite-dimensional approximation and involves three steps.

\textit{Step 1: Stability of Lagrangian solutions.} 
Let us fix $\delta > 0$ and $\bar{\Lambda}^{1},  \bar{\Lambda}^{2} \in \cP (B_{\delta}^{Y_\varepsilon})$. Let us assume that $\Lambda^{1}, \Lambda^{2} \in C([0, T]; (\cP_{1}(Y_{\varepsilon}, \mathcal{W}_{1} ) )$ are two Lagrangian solutions to~\eqref{continuity_equation} with initial data $\bar{\Lambda}^{1}$ and~$\bar{\Lambda}^{2}$, respectively. In particular, by Lemma~\ref{l:4.7} we have that there exists~$\varrho$ (only depending on~$\delta$ and~$\varepsilon$) such that $\Lambda^{1}_{t}, \Lambda^{2}_{t} \in \cP(B^{Y_{\varepsilon}}_{\varrho})$ for every $t \in [0, T]$. 
%We further denote $L_{\varepsilon, \varrho}$ and by~$L_{\varrho}$ the Lipschitz constants provided by~\eqref{coup_help2} 
%Notice that such Lipschitz constant is greater than the one of \eqref{coup_help3}. 
We claim that 
\begin{equation}
\label{coup_help14}
\mathcal{W}_1(\Lambda_t^{1},\Lambda_t^{2}) \leq e^{L_{\varepsilon,\varrho} t +  L_{\varrho} t e^{L_{\varepsilon,\varrho}T} }  \,\mathcal{W}_1(\bar{\Lambda}^{1},\bar{\Lambda}^{2})\qquad\textrm{for every $t\in[0,T]$.}
\end{equation}
To prove~\eqref{coup_help14}, we fix $\bar{y}^{1}, \bar{y}^{2}\in B_{\delta}^{Y_\varepsilon}$ and first observe that by Lemma~\ref{l:4.7}
\begin{equation}\label{coup_help12}
\max_{t \in [0, T]} \, \vert\vert\boldsymbol{\mathrm{Y}}_{\Lambda^{i}}(t,0,\bar{y}^{\,i})\vert\vert_{\overline{Y}} \leq \varrho \qquad \text{for  $i=1,2$.}
\end{equation}
For simplicity, let us set $y^{i}_{t} := \boldsymbol{Y}_{\Lambda^{i}} (t, 0 , \bar{y}^{i})$. By \eqref{coup_help2} and \eqref{coup_help3} of Proposition~\ref{regularity_properties_entropic_vector_field} and by~\eqref{coup_help12}, we get that for every $t \in [0, T]$
\begin{equation}
\label{e:y1y2}
\begin{split}
\| y^1_t - y^2_t\|_{\overline{Y}} &\leq \| \bar{y}^{1} - \bar{y}^{2} \|_{\overline{Y}} + \int_0^t\left( \| b^\varepsilon_{\Lambda_s^{1}}(y^1_s) -  b^\varepsilon_{\Lambda_s^{1}}(y^2_s) \|_{\overline{Y}} + \| b^\varepsilon_{\Lambda_s^{1}}(y^2_s) - b^\varepsilon_{\Lambda_s^{2}}(y^2_s)\|_{\overline{Y}}\right)\de s 
\\ 
&
\leq \| \bar{y}^{1} - \bar{y}^{2}\|_{\overline{Y}} + L_{\varrho} \int_0^t \mathcal{W}_1(\Lambda_s^{1} , \Lambda_s^{2})\, \de s + \int_0^t L_{\varepsilon,\varrho}\,\| y^1_s - y^2_s \|_{\overline{Y}}\,\de s\,.
\end{split}
\end{equation}
Applying Gr\"{o}nwall's lemma, we infer from~\eqref{e:y1y2} that for every $t \in [0, T]$
\begin{equation}
\label{coup_help13}
\| y^1_t - y^2_t \|_{\overline{Y}} \leq \left(  \| \bar{y}^{1} - \bar{y}^{2} \|_{\overline{Y}} + L_{\varrho} \int_0^t  \mathcal{W}_1(\Lambda_s^{1}  ,\Lambda_s^{2}) \, \de s \right)\,e^{L_{\varepsilon, \varrho} t} \,.
\end{equation}
%equivalently,
%\begin{equation}
%\vert\vert\boldsymbol{\mathrm{Y}}^\varepsilon_{\Lambda^{\varepsilon,1}}(t,0,\overline{y}^{1})-\boldsymbol{\mathrm{Y}}^\varepsilon_{\Lambda^{\varepsilon,2}}(t,0,\overline{y}^{2})\vert\vert_{\overline{Y}}\leq \left( \vert\vert\overline{y}^{\,1}-\overline{y}^{\,2}\vert\vert_{\overline{Y}} + L_R \int_0^t  \mathcal{W}_1(\Lambda_s^{\varepsilon,1},\Lambda_s^{\varepsilon,2})\de s \right)\,e^{L_{\varepsilon,R}t}
%\end{equation}
%for all $t\in[0,T]$ and $\overline{y}^{\,1}$, $\overline{y}^{\,2}\in B_r^{Y_\varepsilon}$.\\

Let~$\Pi \in \mathcal{P} (\overline{Y} \times \overline{Y})$ be an optimal plan between $\bar{\Lambda}^{1}$ and $\bar{\Lambda}^{2}$. By the definition of Lagrangian solutions, $(\boldsymbol{\mathrm{Y}}_{\Lambda^{1}}(t,0,\cdot),\boldsymbol{\mathrm{Y}}_{\Lambda^{2}}(t,0,\cdot))_\#\Pi$ is a transport plan between $\Lambda_t^{1}$ and $\Lambda_t^{2}$. Therefore, using~\eqref{coup_help13} we may estimate
\begin{equation*}
\begin{split}
\mathcal{W}_1(\Lambda^{1}_t,\Lambda^{2}_t) & \leq \int_{Y_\varepsilon\times Y_\varepsilon} \| \boldsymbol{\mathrm{Y}}_{\Lambda^{1}}(t,0,y^1) -\boldsymbol{\mathrm{Y}}_{\Lambda^{2}}(t,0,y^2) \|_{\overline{Y}}\,\de\Pi(y^1,y^2) 
%\\ 
%&
%= \int_{B_{\delta}^{Y_\varepsilon}\times B_{\delta}^{Y_\varepsilon}} \| \boldsymbol{\mathrm{Y}}_{\Lambda^{1}}(t,0,y^1) - \boldsymbol{\mathrm{Y}}_{\Lambda^{2}}(t,0,y^2) \|_{\overline{Y}}\,\de\Pi(y^1,y^2) 
\\
&
\leq  e^{L_{\varepsilon, \varrho} t } \int_{\overline{Y} \times \overline{Y} } \|  y^1 - y^2 \|_{\overline{Y}} \, \de \Pi (y^1,y^2) + L_{\varrho} e^{L_{\varepsilon,\varrho} t } \int_0^t \mathcal{W}_1(\Lambda_s^{1} , \Lambda_s^{2} ) \, \de s 
\\ 
&
= e^{L_{\varepsilon,\varrho} t }  \mathcal{W}_1(\bar{\Lambda}^{1},\bar{\Lambda}^{2}) + L_{\varrho} e^{L_{\varepsilon,\varrho}t}\,\int_0^t \mathcal{W}_1(\Lambda_s^{1},\Lambda_s^{2}) \,\de s\,.
\end{split}
\end{equation*}
Applying again the Gr\"{o}nwall lemma we deduce~\eqref{coup_help14}.

\textit{Step 2: Existence and approximation of Lagrangian solutions.} We fix a sequence of atomic measures $\bar{\Lambda}^{N } \in \cP(B_{\delta}^{Y_{\varepsilon}} )$ such that
\begin{equation}
\label{coup_help16}
\lim_{N\to  \infty}\mathcal{W}_1(\bar{\Lambda}^{N},\bar{\Lambda} ) = 0 \,.
\end{equation} 
Such a sequence can be constructed as follows: let $\bar{y}^{i}(z)\in Y_\varepsilon$ be independent and identically distributed with law $\bar{\Lambda}$, so that the random measures $\bar{\Lambda}^{N} \coloneqq \frac{1}{N}\sum_{i=1}^N \delta_{\bar{y}^i(z)}$ almost surely converge in~$\cP_1(Y_\varepsilon)$ to $\bar{\Lambda}$. Then, choose a realization~$z$ such that this convergence takes place. By Theorem~\ref{Entropic_Problem_theorem}, there exists unique the solution to system~\eqref{coup_entropic_problem_compact} with initial condition $\bar{\by} = (\bar{y}^{1}, \ldots, \bar{y}^{N})$ and let $\Lambda^{N}_t$ be the associated empirical measures. As $\Lambda_t^{N}$ are also Lagrangian solutions to~\eqref{continuity_equation} with initial condition $\bar{\Lambda}^{N}$,~\eqref{coup_help14} provides a constant $C\coloneqq C(\varepsilon, \delta, T)$ such that for every $t\in [0,T]$ and every $N, M \in \mathbb{N}$
\begin{equation*}
\mathcal{W}_1(\Lambda_t^{N},\Lambda_t^{M}) \leq C \mathcal{W}_1(\bar{\Lambda}^{N}  , \bar{\Lambda}^{M} )\,.
\end{equation*}
Thus, $\Lambda^{N}\in C([0,T];(\cP_1(B_{\varrho}^{Y_\varepsilon}),\mathcal{W}_1))$ is a Cauchy sequence, and there exists $\Lambda \in C([0,T];(\cP_1(B_{\varrho}^{Y_\varepsilon}),\mathcal{W}_1))$ such that $\Lambda^{N}_{t}$ converges to~$\Lambda_{t}$ with respect to the Wasserstein distance~$\mathcal{W}_{1}$, uniformly in $t \in [0, T]$. Moreover, arguing as in the proof of~\eqref{e:Gronwall-Y}, we may find~$\bar{\varrho} \geq \varrho$ such that $\boldsymbol{Y}_{\Lambda} (t, 0, \bar{y}) \in B^{Y_{\varepsilon}}_{\bar{\varrho}}$ for every $t \in [0, T]$ and every $\bar{y} \in B^{Y_{\varepsilon}}_{\delta}$. 
%For a given $\overline{y}^{\,\varepsilon}\in B_r^{Y_\varepsilon}$, consider now the solution $y^{\varepsilon,N}_t$ and $y^\varepsilon_t$ to the ODEs $\dot{y}^{\varepsilon,N} = b^\varepsilon_{\Lambda^{\varepsilon,N}_t}(y^{\varepsilon,N}$ and $\dot{y}^\varepsilon = b^\varepsilon_{\Lambda^{\varepsilon}_t}(y^{\varepsilon})$, respectively, with initial datum $\overline{y}^{\,\varepsilon}$. Let $R'\geq R$ be an upper bound\footnote{\,Observe that at this point of the proof we cannot a priori exclude that $R'>R$, since we still do not know that $\Lambda^\varepsilon$ is a Lagrangian solution, hence we cannot apply \eqref{coup_help12} (which holds instead for $\Lambda^{\varepsilon,N}$).} for $\max_{t\in[0,T]}\vert\vert y^\varepsilon_t\vert\vert_{\overline{Y}}$, which can be taken independent form $\overline{y}^{\,\varepsilon}\in B_r^{Y_\varepsilon}$. 
In view of~\eqref{coup_help2} and~\eqref{coup_help3} we obtain that 
\begin{equation*}
\| \boldsymbol{\mathrm{Y}}_{\Lambda } ( t , 0 , \bar{y} ) - \boldsymbol{\mathrm{Y}}_{\Lambda^{N}}(t,0,\bar{y} )\|_{\overline{Y}} \leq  L_R\,e^{L_{\varepsilon,\bar{\varrho} }t} \int_0^t  \mathcal{W}_1(\Lambda_s ,\Lambda_{s}^{N}) \,\de s\,,
\end{equation*}
%which entails the uniform convergence of $\boldsymbol{\mathrm{Y}}_ {\Lambda^{N}}(\cdot,t,\cdot)$ to $\boldsymbol{\mathrm{Y}}_{\Lambda}(\cdot,t,\cdot)$ in $[0,T]\times B_{\delta}^{Y_\varepsilon}$. This, together with the convergence of~$\Lambda^{N}$ to $\Lambda$, implies that $\Lambda_{t} = \boldsymbol{T}_{\Lambda} (t, 0, \cdot)_{\#} \bar{\Lambda}$ and $\Lambda$ is a Lagrangian solution to~\eqref{continuity_equation}.For each $t\in [0,T]$ this implies together with \eqref{coup_help16} and the fact that $\boldsymbol{\mathrm{Y}}^\varepsilon_{\Lambda^\varepsilon}(t,0,\cdot)$ is a Lipschitz map on $B_r^{Y_\varepsilon}$\,, that 
%\begin{equation*}
%\Lambda_t^{\varepsilon,N}= \boldsymbol{\mathrm{Y}}^\varepsilon_{\Lambda^{\varepsilon,N}}(t,0,\cdot)_\#\overline{\Lambda}^{\,\varepsilon,N}\rightarrow \boldsymbol{\mathrm{Y}}^\varepsilon_{\Lambda^\varepsilon}(t,0,\cdot)_\#\overline{\Lambda}^{\,\varepsilon}
%\end{equation*}
%in $\cP_1(Y_\varepsilon)$, which gives $\Lambda_t^\varepsilon = \boldsymbol{\mathrm{Y}}^\varepsilon_{\Lambda^\varepsilon}(t,0,\cdot)_\#\overline{\Lambda}^{\,\varepsilon}$.\\

\textit{Step 3: Uniqueness and conclusion.} Uniqueness of Lagrangian solutions, given the initial datum, follows now from \eqref{coup_help14}. Uniqueness of Eulerian solutions is stated in Theorem~\ref{thm:uniqueness}.
\end{proof}

\section{Fast Reaction Limit for undisclosed replicator-type dynamics}
\label{Fast_Reaction_Limit}

The aim of this section is to address the case in which the dynamics for the labels runs at a much faster time scale than the dynamics for the agents' positions. In this case, introducing the fast time scale $\tau = \lambda\,t$, with $\lambda \gg 1$, system  \eqref{coup_entropic_problem_compact} takes the form
\begin{equation}
\label{two_time_scale_system}
\begin{cases}
\dot{x}_t^{i} = v_{\Lambda_t^N}(x_t^{i},\ell^{i}_t), \\
\dot{\ell}_t^{i} = \lambda[\cT_{\Lambda_t^{N}}(x^{i}_t,\ell^{i}_t) + \varepsilon\,\mathcal{H}(\ell_t^{i})]
\end{cases} \qquad \textrm{for }i=1,\dots,N,\,\,t\in[0,T].
\end{equation}
Note that, for $\varepsilon>0$ and $0 < r_{\varepsilon} < 1 < R_{\varepsilon} < +\infty$ as in Proposition~\ref{regularity_properties_entropic_vector_field}, the well-posedness of~\eqref{two_time_scale_system} is still guaranteed by Theorem~\ref{Entropic_Problem_theorem} (see Proposition~\ref{p:well-posedness-lambda}). We focus on the behavior of system \eqref{two_time_scale_system} as $\lambda\rightarrow +\infty$, thus we are interested in the case of instantaneous adjustment of the strategies.

From now on, for $\Psi \in \cP_{1} (Y_{\varepsilon})$ we denote $\nu \coloneqq \pi_{\#} \Psi$, where $\pi \colon Y_{\varepsilon} \to \R^{d}$ is the canonical projection over~$\R^{d}$. If $\Lambda^{N}, \Lambda$ are curves with values in~$\cP_{1}(Y_{\varepsilon})$, the symbols~$\mu^{N}$ and~$\mu$ will instead indicate the curves of measures $\mu^{N}_{t}, \mu_{t}$, obtained as push-forward of~$\Lambda^{N}_{t}$ and~$\Lambda_{t}$ for $t \in [0, T]$ through~$\pi$. 

We assume that the strategies dynamics is of replicator type, i.e., we suppose that in the second equation in~\eqref{two_time_scale_system} the operator~$\mathcal{T}_{\Psi}$ takes the form
\begin{equation}
\label{fast_reaction}
\mathcal{T}_{\Psi} (x, \ell) \coloneqq \left( \int_{U} \partial_\xi F_{\nu} ( x , \ell (u) , u ) \ell (u) \, \de \eta (u) - \partial_\xi F_{\nu } ( x , \ell , \cdot ) \right ) \ell \quad \text{for $x \in \mathbb{R}^{d}$ and $\ell \in L^{p}(U, \eta)$,}
\end{equation}
for a map $F\colon \mathcal{P}_1(\mathbb{R}^d)\times \mathbb{R}^d \times ( 0 , +\infty) \times U \to [-\infty,+\infty]$
satisfying the following properties:
\begin{itemize}
\item[$(\mathrm{F1})$] for every $\varrho>0$, every $\nu \in \mathcal{P}(B_\varrho)$, every $x\in B_\varrho$, and every $\ell\in C_\varepsilon$, the map $u \mapsto F_{\nu} ( x , \ell(u) , u )$ is $\eta$--integrable;

\item[$(\mathrm{F2})$]  for every $\varrho>0$, every $\nu \in \mathcal{P}(B_\varrho)$, every $x\in B_\varrho$, and every~$u\in U$, the map $g_{(\nu, x, u)} \colon (0, +\infty) \to \mathbb{R}$ defined as $g_{(\nu, x, u)}(\xi) \coloneqq  F_{\nu} ( x , \xi , u )$ is convex, is differentiable, and its derivative~$g'_{(\nu, x, u)}$ is Lipschitz continuous in~$(0, +\infty)$, uniformly with respect of~$(\nu, x, u) \in  \mathcal{P}(B_\varrho) \times B_{\varrho}\times U$; 

\item[$(\mathrm{F3})$]  there exists $C_{F}>0$ such that for every $\varrho>0$, every $\nu \in \mathcal{P}(B_{\varrho})$, every $x \in B_{\varrho}$, every $\xi \in (0, +\infty)$, and every $u \in U$
\begin{displaymath}
|\partial_{\xi} F_{\nu} (x, \xi, u) | \leq C_{F}\,;
\end{displaymath}

%is twice differentiable with second derivative strictly positive and bounded uniformly with respect to $u$, that is $2\alpha<g''(\xi)<M$ for every $\xi\in[r_\varepsilon,R_\varepsilon]$ and for some $M,\alpha > 0$ independent of $u$;

\item[$(\mathrm{F4})$] for every $\varrho>0$, the maps $(\nu, x) \mapsto F_{\nu} (x, \xi, u)$ and $(\nu, x) \mapsto \partial_{\xi} F_{\nu} (x, \xi, u)$ are Lipschitz continuous in~$\mathcal{P}_{1}(B_{\varrho}) \times B_{\varrho}$ uniformly with respect to~$u \in U$ and~$\xi \in (0, +\infty)$. Namely, there exists $\Gamma_{\varrho}>0$ such that for every $\xi \in (0, +\infty)$, every $x_{1}, x_{2} \in B_{\varrho}$, every $\nu_{1}, \nu_{2} \in \cP(B_{\varrho})$, and every $u \in U$
\begin{align*}
| F_{\nu_{1}}(x_1,\xi,u) - F_{\nu_{2}}(x_2,\xi,u) | & \leq \Gamma_{\varrho} \big( |  x_1 - x_2 | + \mathcal{W}_1(\nu_{1} ,\nu_{2} ) \big)\,,\\
 | \partial_\xi F_{\nu_{1}} (x_1, \xi , u ) - \partial_\xi F_{\nu_2} ( x_2 , \xi , u ) | & \leq \Gamma_{\varrho}  \big ( | x_1 - x_2 | + \mathcal{W}_1(\nu_1,\nu_2) \big)\,;
\end{align*}

\item[$(\mathrm{F5})$] for every $\varrho>0$, every $\nu \in \mathcal{P}(B_{\varrho})$, every $\xi \in (0, +\infty)$, and every $u \in U$, the map $F_{\nu} (\cdot, \xi, u)$ is differentiable in~$\mathbb{R}^{d}$.
%Note that by Rademacher's Theorem, the map $x\mapsto F_\Sigma(x,\xi,u)$ is differentiable for almost every $x\in \mathrm{int}(B_R)$ and 
%$\left\vert \partial_x F_\Sigma(x,\xi,u)\right\vert \leq L_R\,.$
\end{itemize}

\begin{remark}\label{rem5.1}
The analysis of the fast reaction limit in the undisclosed setting has been recently performed in~\cite{entropy} for the replicator dynamics (see also Remark~\ref{r:examples}), where the authors considered a pay-off function~$J$ independent of the strategy~$u'$ played by other players. Hence, the functional~$\mathcal{J}_{\Psi}$ in~\eqref{e:JLambda} takes the form
\begin{equation*}
\begin{split}
\mathcal{J}_{\Psi} (x,u) = \int_{Y} J(x,u,x')\,\de\Psi (x',\ell') = \int_{\mathbb{R}^d} J(x,u,x')\,\de \nu(x') \eqqcolon \mathcal{J}_{\nu}(x,u)\,,
\end{split}
\end{equation*}
which would correspond (see~\eqref{fast_reaction}) to the operator
\begin{equation*}
\cT_\Psi (x,\ell) = \left(\mathcal{J}_{\nu} (x,\cdot) - \int_U \mathcal{J}_{\nu} (x,u) \ell(u) \,\de\eta(u) \right)\ell\,.
\end{equation*}
and to $F_{\nu} (x, \xi, u) \coloneqq -\mathcal{J}_{\nu}(x,u) \xi$ for every $(\nu, x, \xi, u) \in \mathcal{P}_{1}(\mathbb{R}^{d}) \times \mathbb{R}^{d} \times (0, +\infty) \times U$. Furthermore, in~\cite{entropy} a precise choice for the velocity field~$v_{\Psi}$ is made, which is independent of the state variable~$\Psi$. % and makes explicit computations of the fast-reaction limit possible.

The theoretical framework described in~$\mathrm{(F1)}$--$\mathrm{(F5)}$ is more flexible than~\cite{entropy}. Besides the freedom in the choice of~$v_{\Psi}$, we may for instance model more involved situations, where the pay-off of a certain strategy depends as well on how often such strategy has been played. Such behavior may be captured by a pay-off function $\widetilde{J} \colon \mathbb{R}^{d} \times U \times \mathbb{R}^{d} \times (0, +\infty) \to \mathbb{R}$ of the form
\begin{displaymath}
\widetilde{J} (x, u, x', \xi) \coloneqq J(x, u, x') - J_{1} (\xi)\,,
\end{displaymath}
where $J_{1} \colon [0, +\infty) \to \mathbb{R}$ is monotone increasing, concave, and differentiable with bounded and Lipschitz derivative. In particular, the monotonicity assumption of~$J_{1}$ is meant to penalize strategies that are played too often, and may be therefore expected by other players. Monotonicity of~$J_{1}$ and the regularity of its derivatives comply with conditions~$\mathrm{(F1)}$--$\mathrm{(F5)}$.
%\begin{equation*}
%\cT_\Psi(x,\ell) +\varepsilon\mathcal{H}(\ell)=\left(\,\int_U [\varepsilon\log(\ell(u))-\mathcal{J}_{\pi_\#\Psi}(x,u)]\,\ell(u)\,\de\eta(u)-[\varepsilon\log(\ell)-\mathcal{J}_{\pi_\#\Psi}(x,\cdot)]\right) \ell,
%\end{equation*}
%and finally
%\begin{equation*}
% F_{\pi_\#\Psi}(x,\xi,u) = -\mathcal{J}_{\pi_\#\Psi}(x,u)\,\xi+\varepsilon\,[\xi\log(\xi)-\xi].
%\end{equation*}
\end{remark}

The following proposition provides a set of conditions under which assumptions $(\mathrm{F1})$--$(\mathrm{F5})$ are satisfied for integral functionals.

\begin{proposition}
\label{integralcase}
Let $f\colon \mathbb{R}^d\times  ( 0 , +\infty)  \times U \times \mathbb{R}^d \to (-\infty,+\infty]$ satisfy the following properties:
\begin{itemize}
\item[$\mathrm{(f1)}$] for every $\varrho>0$, every $\nu\in \cP(B_{\varrho})$, every $x\in B_{\varrho}$, and every $\ell\in L^{p}(U, \eta)$ the map
\begin{equation*}
u\mapsto \int_{\mathbb{R}^d} f(x,\ell(u),u,x')\,\de \nu (x')
\end{equation*}
is $\eta$--integrable;

\item[$\mathrm{(f2)}$]  for every $\varrho>0$,  every $x, x' \in B_\varrho$, and every~$u\in U$, the map $\xi \mapsto f(x, \xi, u, x')$ is convex in~$(0, +\infty)$, is differentiable with derivative $\partial_{\xi} f(x, \xi, u, x')$ Lipschitz continuous in~$(0, +\infty)$, uniformly with respect to~$(x, u, x') \in B_{\varrho} \times U \times B_{\varrho}$;

% for every $R>0$, $x\in B_R$\,, $x'\in B_R$\,, and for $\eta$-a.e.~$u\in U$ the map $\xi \mapsto f(x,\xi,u,x')$ is twice differentiable with second derivative bounded from above and below by two positive constants independent of $u$ and $x'$;

\item [$(\mathrm{f3})$]  there exists $C_{f}>0$ such that for every $\varrho>0$, $x, x' \in B_{\varrho}$, every $\xi \in (0, +\infty)$, and every $u \in U$
\begin{displaymath}
|\partial_{\xi} f (x, \xi, u, x') | \leq C_{f}\,.
\end{displaymath}

\item[$\mathrm{(f4)}$] for every $\varrho>0$, every $x, x' \in B_{\varrho}$, every $ \xi \in (0, +\infty)$, and every~$u\in U$ the function $x' \mapsto f(x,\xi,u,x')$ belongs to $\mathrm{Lip}_b(\mathbb{R}^d)$ and the map $x \mapsto f (x , \xi , u , x' )\in \mathrm{Lip}(\mathbb{R}^d)$, with Lipschitz constants dependent only on~$\varrho$;

\item[$\mathrm{(f5)}$] for every $\varrho>0$, every $x, x' \in B_{\varrho}$, every $ \xi \in (0, +\infty)$, and every~$u\in U$, the function $x' \mapsto \partial_\xi f(x,\xi,u, x')$ belongs to~$\mathrm{Lip}_b(\mathbb{R}^d)$, and the map $x \mapsto \partial_\xi f ( x , \xi , u ,x' )$ belongs to $\mathrm{Lip}(\mathbb{R}^d)$, with Lipschitz constants depending only on~$\varrho$;

\item[$\mathrm{(f6)}$] for every $\varrho>0$, every $\xi \in (0, +\infty)$, every $u \in U$, and every $x' \in B_{\varrho}$, the map $f(\cdot, \xi, u, x')$ is differentiable in~$\mathbb{R}^{d}$.
\end{itemize}
Then, the functional $F \colon \mathcal{P}_1(\mathbb{R}^d)\times \mathbb{R}^d \times ( 0 , +\infty) \times U \to (-\infty,+\infty]$ defined as
\begin{equation*}
F_\nu (x,\xi,u) \coloneqq \int_{\mathbb{R}^d} f ( x , \xi , u , x' )\,\de \nu(x')
\end{equation*}
fulfills conditions~$\mathrm{(F1)}$--$\mathrm{(F5)}$.
\end{proposition}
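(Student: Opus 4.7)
The plan is to verify each of the conditions $\mathrm{(F1)}$--$\mathrm{(F5)}$ in turn, using the corresponding assumption on $f$ together with standard tools: dominated convergence for exchanging differentiation and integration, preservation of convexity under integration against positive measures, and the Kantorovich-Rubinstein duality formula~\eqref{W1} to control integrals against $\nu_1 - \nu_2$ via the Wasserstein distance.

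First, $\mathrm{(F1)}$ is immediate from $\mathrm{(f1)}$ together with Fubini's theorem; indeed, the $\eta$-integrability of $u \mapsto F_\nu(x,\ell(u),u)$ is exactly the content of $\mathrm{(f1)}$. Next, for $\mathrm{(F2)}$ fix $\varrho>0$, $\nu \in \mathcal{P}(B_\varrho)$, $x \in B_\varrho$, and $u \in U$. Convexity of $g_{(\nu,x,u)}(\xi) = \int_{\mathbb{R}^d} f(x,\xi,u,x')\,\de\nu(x')$ follows by integrating the pointwise convexity in $\mathrm{(f2)}$ against the positive measure $\nu$. For differentiability, the uniform bound $\mathrm{(f3)}$ provides an integrable majorant (since $\nu$ is a probability measure) so dominated convergence yields
\begin{equation*}
g'_{(\nu,x,u)}(\xi) = \int_{\mathbb{R}^d} \partial_\xi f(x,\xi,u,x')\,\de\nu(x')\,.
\end{equation*}
The Lipschitz continuity of $g'_{(\nu,x,u)}$ with a constant uniform in $(\nu,x,u)$ then follows by integrating the uniform Lipschitz estimate from $\mathrm{(f2)}$. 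Condition $\mathrm{(F3)}$ is an immediate consequence of the above formula for $\partial_\xi F_\nu$ and the pointwise bound $\mathrm{(f3)}$.

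For $\mathrm{(F4)}$, fix $\varrho>0$, $\xi \in (0,+\infty)$, $u \in U$, $x_1,x_2 \in B_\varrho$, and $\nu_1, \nu_2 \in \mathcal{P}(B_\varrho)$. We decompose
\begin{equation*}
F_{\nu_1}(x_1,\xi,u) - F_{\nu_2}(x_2,\xi,u) = \int_{\mathbb{R}^d} \big[ f(x_1,\xi,u,x') - f(x_2,\xi,u,x') \big]\,\de\nu_2(x') + \int_{\mathbb{R}^d} f(x_1,\xi,u,x')\,\de(\nu_1-\nu_2)(x')\,.
\end{equation*}
The first integral is bounded by $\mathrm{Lip}_x(f)\,|x_1-x_2|$, where $\mathrm{Lip}_x(f)$ denotes the Lipschitz constant in $x$ provided by $\mathrm{(f4)}$. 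The second integral is estimated via the Kantorovich-Rubinstein duality~\eqref{W1}: since $x' \mapsto f(x_1,\xi,u,x')$ is in $\mathrm{Lip}_b(\mathbb{R}^d)$ by $\mathrm{(f4)}$ with a Lipschitz constant independent of $(x_1,\xi,u)$ in the relevant range, we get the bound by $\mathrm{Lip}_{x'}(f)\,\mathcal{W}_1(\nu_1,\nu_2)$. Choosing $\Gamma_\varrho$ as the maximum of these constants yields the first inequality in $\mathrm{(F4)}$. The second inequality (for $\partial_\xi F$) is obtained in exactly the same way, replacing $\mathrm{(f4)}$ with $\mathrm{(f5)}$ and using the formula for $\partial_\xi F_\nu$ established above.

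Finally, $\mathrm{(F5)}$ follows from $\mathrm{(f6)}$ via another application of dominated convergence: the Lipschitz property $\mathrm{(f4)}$ provides a uniform (in $x'$) bound on the difference quotients of $f(\cdot,\xi,u,x')$ on balls, which serves as the integrable majorant needed to differentiate under the integral sign. The main care needed throughout is keeping track of \emph{uniform} dependence of constants on the parameters not being varied, which is precisely what the ``uniformly in $\ldots$'' clauses in $\mathrm{(f2)}$--$\mathrm{(f5)}$ are designed to provide; no step presents a genuine obstacle beyond bookkeeping.
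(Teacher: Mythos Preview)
Your proof is correct and follows essentially the same route as the paper's own (very terse) argument: both verify $\mathrm{(F1)}$--$\mathrm{(F5)}$ directly from the corresponding hypotheses on~$f$, passing derivatives under the integral sign to obtain the formula $\partial_\xi F_\nu(x,\xi,u) = \int_{\mathbb{R}^d} \partial_\xi f(x,\xi,u,x')\,\de\nu(x')$. Your treatment is in fact more detailed than the paper's, particularly for~$\mathrm{(F4)}$, where you make the use of the Kantorovich--Rubinstein duality explicit, whereas the paper simply cites $\mathrm{(f3)}$--$\mathrm{(f5)}$ without further comment.
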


\begin{proof}
Condition $\mathrm{(F1)}$ coincides with $\mathrm{(f1)}$. Property~$\mathrm{(F2)}$ follows from $\mathrm{(f2)}$, which in particular implies that
\begin{displaymath}
\partial_{\xi} F_{\nu} (x, \xi, u) = \int_{\R^{d} } \partial_{\xi} f (x, \xi, u, x') \, \de \nu(x')\,.
\end{displaymath}
Thus, we deduce $(\mathrm{F3})$ and~$(\mathrm{F4})$ from~$\mathrm{(f3)}$--$\mathrm{(f5)}$. Finally, from $\mathrm{(f5)}$ and~$\mathrm{(f6)}$ we deduce that for every~$\varrho>0$, every $\xi \in (0, +\infty)$, every $u \in U$, and every $\nu \in \mathcal{P}(B_{\varrho})$ we have
\begin{displaymath}
\partial_{x} F_{\nu} (x, \xi, u) = \int_{U} \partial_{x} f(x, \xi, u, x') \, \de \nu(x')\,. \qedhere
\end{displaymath}
% $\mathrm{(fIII)}$, and $\mathrm{(fIV)}$ by applying Leibniz integral rule. Assumption $(\mathrm{FIII})$ is a direct consequence of $\mathrm{(fIII)}$ and $\mathrm{(fIV)}$. 
\end{proof}

For $\lambda\in (0, +\infty)$, we now briefly discuss the well-posedness of~\eqref{two_time_scale_system} for the operator~$\mathcal{T}_{\Psi}$ as in~\eqref{fast_reaction}

\begin{proposition}
\label{p:well-posedness-lambda}
Let~$F \colon \mathcal{P}_1(\mathbb{R}^d)\times \mathbb{R}^d \times ( 0 , +\infty) \times U \to (-\infty,+\infty]$ satisfy $(\mathrm{F1})$--$(\mathrm{F5})$. Then, the operator~$\mathcal{T}_{\Psi}$ defined in~\eqref{fast_reaction} for every $\Psi \in \mathcal{P}_{1}(Y)$ satisfies conditions~$\mathrm{(T1)}$--$\mathrm{(T3)}$.
\end{proposition}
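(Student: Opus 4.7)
I will verify the three properties in order, writing throughout
\begin{equation*}
\cT_\Psi (x, \ell)(u) = \bigl[ A(x, \ell) - \partial_{\xi} F_\nu (x, \ell(u), u) \bigr] \ell(u), \qquad A(x, \ell) \coloneqq \int_U \partial_{\xi} F_\nu(x, \ell(u'), u') \ell(u') \, \de \eta(u'),
\end{equation*}
and working on bounded subsets of $Y_\varepsilon$, where $\ell$ is bounded above by $R_\varepsilon$ and below by $r_\varepsilon>0$ (so that \eqref{fast_reaction} is pointwise well defined and all $\xi$--arguments stay in a compact sub--interval of $(0,+\infty)$).

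\emph{(T1) and (T3) are immediate from (F3).} Integrating the displayed formula against $\eta$ and using $\int_U \ell \, \de \eta = 1$ (as $\ell$ is a probability density), the two summands cancel, giving (T1). For (T3), the pointwise bound $|\partial_{\xi} F_\nu(x, \xi, u)| \leq C_F$ from (F3) yields $|A(x, \ell)| \leq C_F$, hence $|\cT_\Psi(x, \ell)(u)| \leq 2 C_F \, \ell(u)$ for $\eta$-a.e.~$u \in U$; both inequalities of (T3) then follow with $C_{\cT} \coloneqq 2 C_F$ and $\omega(s) \coloneqq s$, which trivially satisfies $\underline\omega = \overline\omega = 1 \in [0,+\infty)$.

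\emph{(T2) is the main calculation.} Fix $\varrho > 0$, $(y^i, \Psi^i) \in B_\varrho^{Y_\varepsilon} \times \cP(B_\varrho^{Y_\varepsilon})$ with $y^i = (x^i, \ell^i)$, and set $A_i \coloneqq A(x^i, \ell^i)$ and $B_i(u) \coloneqq \partial_{\xi} F_{\nu^i}(x^i, \ell^i(u), u)$, so that $\cT_{\Psi^i}(y^i) = (A_i - B_i) \ell^i$. The telescoping identity
\begin{equation*}
\cT_{\Psi^1}(y^1) - \cT_{\Psi^2}(y^2) = (A_1 - A_2) \ell^1 + A_2 (\ell^1 - \ell^2) - (B_1 - B_2) \ell^1 - B_2 (\ell^1 - \ell^2)
\end{equation*}
reduces the $L^p$-estimate to four pieces. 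The terms with $A_2$ and $B_2$ are controlled by $2 C_F \|\ell^1 - \ell^2\|_{L^p}$ via (F3). For the pointwise difference $B_1(u) - B_2(u)$ I insert the intermediate value $\partial_\xi F_{\nu^1}(x^1, \ell^2(u), u)$ and split it into a pure--$\xi$ part, bounded by the Lipschitz constant $L$ of $g'_{(\nu, x, u)}$ from (F2) (uniform in $(\nu, x, u)$) so as to give $L |\ell^1(u) - \ell^2(u)|$, and a $(\nu, x)$--only part, bounded by (F4) by $\Gamma_\varrho \bigl( |x^1 - x^2| + \cW_1(\nu^1, \nu^2) \bigr)$. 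The same trick applied inside the integrand of $A_1 - A_2$, together with $\ell^i \leq R_\varepsilon$, yields $|A_1 - A_2| \leq C\bigl( |x^1 - x^2| + \cW_1(\nu^1, \nu^2) + \|\ell^1 - \ell^2\|_{L^1(U,\eta)} \bigr)$; since $\eta$ is a probability measure, $\|\cdot\|_{L^1} \leq \|\cdot\|_{L^p}$. Using finally $\|\ell^1\|_{L^\infty} \leq R_\varepsilon$ and $\|\ell^1\|_{L^p} \leq \varrho$ on $Y_\varepsilon$, and the fact that the canonical projection $\pi \colon \overline Y \to \R^d$ is $1$--Lipschitz, whence $\cW_1(\nu^1, \nu^2) \leq \cW_1(\Psi^1, \Psi^2)$, collecting all contributions gives (T2) with some $L_{\cT, \varrho}$ depending on $\varrho$, $C_F$, $\Gamma_\varrho$, $L$, and $R_\varepsilon$.

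\emph{Main obstacle.} No step is conceptually deep; the only subtle point is that the term $(B_1 - B_2)\ell^1$ carries a product of two $\ell$--dependent factors, so one of them must be absorbed via an $L^\infty$ bound. This is precisely what the passage to $Y_\varepsilon$ buys us through the upper bound $R_\varepsilon$, and is also what makes the Lipschitz constant in (F2) applicable on the range of the~$\ell^i$. All other estimates are a bookkeeping exercise once the intermediate quantity $\partial_\xi F_{\nu^1}(x^1, \ell^2(u), u)$ is inserted to decouple the dependence on $\xi$ from the dependence on $(\nu,x)$.
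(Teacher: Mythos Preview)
Your proof is correct and takes essentially the same approach as the paper's: (T1) and (T3) follow directly from (F3), yielding $|\cT_\Psi(y)(u)| \leq 2C_F\,\ell(u)$ with $\omega(s)=s$, while (T2) is obtained from (F2) and (F4) (with (F3) used for the absorbing terms). You supply considerably more detail for (T2)---the paper merely cites the relevant hypotheses---and your explicit passage to $Y_\varepsilon$ (more precisely, to some $Y_{r,R}$) so as to secure an $L^\infty$ bound on the labels for the product term $(B_1-B_2)\ell^1$ is a careful point the paper's one-line argument glosses over; since (T2) is only ever applied in Proposition~\ref{regularity_properties_entropic_vector_field} to elements of $Y_\varepsilon$, this restriction is harmless.
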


\begin{proof}
By definition~\eqref{fast_reaction},~$\mathcal{T}_{\Psi}$ clearly satisfies~$(\mathrm{T1})$. Property~$(\mathrm{T2})$ is a consequence of~$(\mathrm{F2})$ and of~$(\mathrm{F4})$, while~$(\mathrm{T3})$ follows from~$(\mathrm{F3})$, as for $y = (x, \ell) \in Y$ and $u \in U$ we can simply estimate
\begin{displaymath}
| \mathcal{T}_{\Psi} (y) (u)| \leq 2 C_{F} | \ell(u)|\,.
\end{displaymath}
Thus, $(\mathrm{T3})$ is satisfied with~$\omega( \xi ) \coloneqq | \xi|$ for $\xi \in [0, +\infty)$.
\end{proof}

\begin{corollary}
\label{c:lambda}
Let $v_{\Psi}$ satisfy $\mathrm{(v1)}$--$\mathrm{(v3)}$, let~$F \colon \mathcal{P}_1(\mathbb{R}^d)\times \mathbb{R}^d \times ( 0 , +\infty) \times U \to (-\infty,+\infty]$ satisfy $(\mathrm{F1})$--$(\mathrm{F5})$. and let~$\mathcal{T}_{\Psi}$ be as in~\eqref{fast_reaction}. Moreover, for~$\varepsilon>0$ let $0< r_{\varepsilon} < 1 < R_{\varepsilon} < +\infty$ be given by Proposition~\ref{regularity_properties_entropic_vector_field}. Then, the following facts hold:
\begin{itemize}
\item[$(i)$] for every $\lambda \in (0, +\infty)$ and every $N \in \mathbb{N}$, system~\eqref{two_time_scale_system} admits a unique solution for every initial condition~$\bar{y} \coloneqq (\bar{y}^{1}, \ldots, \bar{y}^{N}) \in Y_{\varepsilon}^{N}$;

\item[$(ii)$] for every $\lambda, \delta \in (0, +\infty)$ and every $\bar{\Lambda} \in \mathcal{P}(B^{Y_{\varepsilon}}_{\delta})$, there exists a unique (Lagrangian / Eulerian) solution to the continuity equation
\begin{equation}
\label{e:continuity_lambda}
\partial_{t} \Lambda_{t} + \dive (b^{\varepsilon, \lambda}_{\Lambda_{t}} \Lambda_{t}) = 0 \qquad \text{with $\Lambda_{0}= \bar{\Lambda}$,}
\end{equation}
where we have set
\begin{displaymath}
b^{\varepsilon, \lambda}_{\Lambda_{t}} (y) \coloneqq \left( 
\begin{array}{cc}
v_{\Lambda_{t}} (y) \\
\lambda ( \mathcal{T}_{\Lambda_{t}} (y) + \varepsilon \mathcal{H} (\ell)) 
\end{array}\right);
\end{displaymath}

\item[$(iii)$] for every $\lambda, \delta \in (0, +\infty)$ and every $\bar{\Lambda}, \bar{\Lambda}_{n} \in \mathcal{P}(B^{Y_{\varepsilon}}_{\delta})$ such that $\mathcal{W}_{1}( \bar{\Lambda}_{n} , \bar{\Lambda}) \to 0$ as $n \to \infty$, the corresponding solutions~$\Lambda, \Lambda_{n} \in C([0, T] ; (\mathcal{P}_{1}(Y^{\varepsilon}), \mathcal{W}_{1}))$ to~\eqref{e:continuity_lambda} with initial conditions~$\bar{\Lambda}$ and~$\bar{\Lambda}_{n}$, respectively, satisfy
\begin{displaymath}
\lim_{n \to \infty} \, \mathcal{W}_{1} (\Lambda_{n, t} , \Lambda_{t}) = 0 \qquad \text{uniformly in $t \in [0, T]$.}
\end{displaymath}
\end{itemize}
\end{corollary}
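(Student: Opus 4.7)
The plan is to apply Theorems~\ref{Entropic_Problem_theorem} and~\ref{Entropic_Main} to the rescaled system, introducing the shorthand $\widetilde{\mathcal{T}}_{\Psi} \coloneqq \lambda \mathcal{T}_{\Psi}$ and $\widetilde{\varepsilon} \coloneqq \lambda \varepsilon$, so that the label equation in~\eqref{two_time_scale_system} takes the already studied form
\begin{equation*}
\dot{\ell}^{\,i}_{t} = \widetilde{\mathcal{T}}_{\Lambda^N_t}(y^i_t) + \widetilde{\varepsilon}\, \mathcal{H}(\ell^i_t).
\end{equation*}
Since Proposition~\ref{p:well-posedness-lambda} ensures that $\mathcal{T}_\Psi$ defined via~\eqref{fast_reaction} satisfies~$(\mathrm{T1})$--$(\mathrm{T3})$, it is immediate that $\widetilde{\mathcal{T}}_\Psi$ does as well: $(\mathrm{T1})$ is preserved under scaling, $(\mathrm{T2})$ holds with Lipschitz constant $\lambda L_{\mathcal{T},\varrho}$, and $(\mathrm{T3})$ holds with modulus $\omega$ unchanged and constant $\lambda C_{\mathcal{T}}$.

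The key observation is that the thresholds $r_\varepsilon, R_\varepsilon$ produced by Proposition~\ref{regularity_properties_entropic_vector_field} are unchanged when passing from $(\mathcal{T}, \varepsilon)$ to $(\widetilde{\mathcal{T}}, \widetilde{\varepsilon})$: inspecting~\eqref{r_eps} and~\eqref{R_eps}, the factor $\lambda$ appears multiplicatively on both sides of each inequality and cancels. Hence the invariant set $Y_\varepsilon$ from Proposition~\ref{regularity_properties_entropic_vector_field} is preserved by the rescaled dynamics for every $\lambda\in(0,+\infty)$, and the sublinear growth bound~\eqref{coup_help4} continues to hold with a constant $M_{\varepsilon,\lambda}$ that may of course depend on $\lambda$ (which is harmless since $\lambda$ is fixed throughout the corollary).

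Having established this, the three items follow directly: part~$(i)$ is a direct application of Theorem~\ref{Entropic_Problem_theorem} to the pair $(\widetilde{\mathcal{T}}, \widetilde{\varepsilon})$; part~$(ii)$ combines Theorem~\ref{Entropic_Main}, Remark~\ref{coup_help18}, and Theorem~\ref{thm:uniqueness} to yield existence and uniqueness of both Lagrangian and Eulerian solutions to~\eqref{e:continuity_lambda}; and part~$(iii)$ is the stability estimate~\eqref{coup_help14} obtained in Step~1 of the proof of Theorem~\ref{Entropic_Main}, which reads
\begin{equation*}
\mathcal{W}_1(\Lambda_{n,t}, \Lambda_t) \leq C(\varepsilon,\lambda,\delta,T) \, \mathcal{W}_1(\bar{\Lambda}_n, \bar{\Lambda}) \qquad \text{uniformly in } t \in [0, T],
\end{equation*}
and implies the desired convergence upon letting $n \to \infty$. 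The only genuinely subtle step in the whole argument is verifying the $\lambda$-independence of the thresholds $r_\varepsilon, R_\varepsilon$; once this is in place, the three items reduce to bookkeeping upon the results already developed in Sections~\ref{sec_decoupled} and~\ref{ELS}.
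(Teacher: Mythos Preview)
Your proof is correct and follows essentially the same route as the paper, which simply remarks that the results of Sections~\ref{sec_decoupled} and~\ref{ELS} apply verbatim with $\lambda$-dependent constants $L_{\varrho}$, $L_{\varepsilon,\varrho}$, $M_{\varepsilon}$, $\theta_{\varepsilon}$. Your explicit verification that the thresholds $r_\varepsilon, R_\varepsilon$ are $\lambda$-independent (because $\lambda$ cancels in~\eqref{r_eps} and~\eqref{R_eps}) is a useful clarification that the paper leaves implicit in its statement but does not spell out in the proof.
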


\begin{proof}
All the items are a consequence of Proposition~\ref{p:well-posedness-lambda} and of Theorem~\ref{Brezis_cor}, and can be obtained arguing as in  Proposition~\ref{regularity_properties_entropic_vector_field} and Theorems~\ref{Entropic_Problem_theorem} and~\ref{Entropic_Main}, taking care of the fact that all the involved constants ($L_{\varrho}$, $L_{\varepsilon, \varrho}$, $M_{\varepsilon}$, and~$\theta_{\varepsilon}$) may depend on~$\lambda$.
\end{proof}

As we did in Section~\ref{sec_decoupled}, from now on we fix~$\varepsilon>0$ and $0 < r_{\varepsilon} < 1 < R_{\varepsilon} < +\infty $ as in Proposition~\ref{regularity_properties_entropic_vector_field} (or, equivalently, as in Proposition~\ref{p:well-posedness-lambda}). We recall that we set $C_{\varepsilon} \coloneqq C_{r_{\varepsilon}, R_{\varepsilon}}$ and $Y_{\varepsilon} \coloneqq Y_{r_{\varepsilon}, R_{\varepsilon}}$.

Our goal is to prove the convergence, as $\lambda \rightarrow +\infty$, of system \eqref{two_time_scale_system} to a suitable system of agents with labels, where such labels are defined as minima of some particular functionals. In Proposition \ref{G} we introduce the prototype for these functionals and present some of its properties. Before stating Proposition \ref{G}, we recall the definition of Fr\'{e}chet differentiability on~$C_{\varepsilon}$ (see, e.g.,~\cite[Appendix~A.1]{Ambrosio}).

\begin{definition}[Fr\'{e}chet differentiability]
\label{Frechet-differentiability}
Let us set $E_{C_{\varepsilon}} \coloneqq \mathbb{R} (C_{\varepsilon} - C_{\varepsilon})$. A functional $\mathcal{F}\colon C_{\varepsilon} \to \mathbb{R}$ is said to be Fr\'echet differentiable at $\ell\in C_{\varepsilon}$ if there exists $L \in \mathcal{L}(E_{C_{\varepsilon}}; \mathbb{R})$ such that 
\begin{equation*}
    \lim_{\substack{\tilde{\ell} \,\xrightarrow{L^p}\,\ell\\ \tilde{\ell} \in C_{\varepsilon}}} 
    \frac{\vert \mathcal{F}(\tilde{\ell})-\mathcal{F}(\ell)- L [\tilde{\ell}-\ell] \vert}{ \| \tilde{\ell}-\ell \|_{L^p(U,\eta)}}=0\,.
\end{equation*}
%and the map $L^p(U,\eta)\ni\ell\to D\mathcal{F}(\ell)\in \mathcal{L}(L^p(U,\eta);\mathbb{R})$ is continuous. A functional $\mathcal{F}\colon C_\varepsilon \to \mathbb{R}$ is continuous Fréchet-differentiable if it is the restriction of a continuous Frechèt-differentiable functional over $L^p(U,\eta)$.
\end{definition}

\begin{remark}
Notice that the linear operator~$L$ in Definition~\ref{Frechet-differentiability} is not uniquely determined on~$E_{C_{\varepsilon}}$, while it is unique on the cone~$E_{\ell} \coloneqq \mathbb{R}_{+} ( C_{\varepsilon} - \ell)$. For this reason, we will always use the notation $D \mathcal{F} (\ell)$ to denote the operator~$L$.
\end{remark}

\begin{proposition}
\label{G}
Let $F \colon \mathcal{P}_1(\mathbb{R}^d)\times \mathbb{R}^d \times ( 0 , +\infty) \times U \to (-\infty,+\infty]$ satisfy~$(\mathrm{F1})$--$(\mathrm{F5})$. For every $\varrho > 0$, every $\nu \in \mathcal{P}(B_{\varrho} )$, and every $x\in B_{\varrho}$,  let $G_{\nu} (x , \cdot ) \colon C_\varepsilon \to \mathbb{R}$ be defined by
\begin{equation}\label{Gfun}
G_{\nu} (x,\ell) \coloneqq  \int_{U} \big( F_{\nu}(x,\ell(u),u) + \varepsilon \ell(u) ( \log (\ell(u))  - 1) \big)\,\de \eta(u) \qquad \text{for $\ell \in C_{\varepsilon}$.} 
\end{equation}
Then, $G_{\nu} (x, \cdot)$ is Fr\'echet differentiable if $p \geq 1$, strongly convex if $1\leq p \leq 2$ and uniformly convex if $2<p<+\infty$. Moreover, there exists $D_{\varrho}>0$ such that for every $\ell_{1}, \ell_{2}\in C_\varepsilon$ and every $(x_{1}, \nu_{1}), (x_{2}, \nu_{2}) \in B_{\varrho}\times \mathcal{P}(B_{\varrho})$\begin{equation}
\label{e:better_Lip}
|G_{\nu_{1}} (x_{1}, \ell_{1} ) - G_{\nu_{2}} (x_{2}, \ell_{2}) | \leq D_{\varrho} \big( | x_{1} - x_{2}| + \| \ell_{1} - \ell_{2} \| _{L^{p}(U, \eta)} + \mathcal{W}_{1} (\nu_{1}, \nu_{2}) \big)\,.
\end{equation}
\end{proposition}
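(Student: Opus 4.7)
I will decompose $G_\nu(x, \ell) = G^{F}_\nu(x, \ell) + \varepsilon\, G^{H}(\ell)$, where $G^{F}_\nu(x,\ell) \coloneqq \int_U F_\nu(x, \ell(u), u) \, \de\eta(u)$ and $G^{H}(\ell) \coloneqq \int_U \ell(u) (\log\ell(u) - 1) \, \de\eta(u)$, and treat each summand with the same scheme, exploiting systematically that $\ell \in C_\varepsilon$ is pinched between $r_\varepsilon$ and $R_\varepsilon$.

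For Fréchet differentiability, the candidate derivative is
\[
DG_\nu(x, \ell)[h] = \int_U \bigl( \partial_\xi F_\nu(x, \ell(u), u) + \varepsilon \log \ell(u) \bigr) h(u) \, \de\eta(u),
\]
which is a bounded linear functional on $L^p(U, \eta)$: by (F3) the first integrand factor is bounded by $C_F$, and $|\log \ell|$ is bounded because $\ell \in [r_\varepsilon, R_\varepsilon]$. To see that the remainder is $o(\| h \|_{L^p(U, \eta)})$ as $\tilde\ell = \ell + h$ runs in $C_\varepsilon$, I would integrate the pointwise identity
\[
g(\xi_1) - g(\xi_2) - g'(\xi_2)(\xi_1 - \xi_2) = \int_0^1 \bigl[ g'(\xi_2 + s(\xi_1 - \xi_2)) - g'(\xi_2) \bigr](\xi_1 - \xi_2)\, \de s
\]
applied both to $g(\xi) = F_\nu(x, \xi, u)$ and to $g(\xi) = \xi(\log\xi - 1)$; by (F2) and by the Lipschitz regularity of $\log$ on $[r_\varepsilon, R_\varepsilon]$, the remainder integrand is pointwise dominated by $C\, h(u)^2$. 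A Hölder estimate and the key observation that $C_\varepsilon - \ell$ is bounded in $L^\infty$ --- so that $L^p$-convergence inside $C_\varepsilon$ automatically upgrades to $L^q$-convergence for every $q < +\infty$ --- close the argument.

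For the convexity statements the strategy is to integrate pointwise inequalities. By (F2), $\xi \mapsto F_\nu(x, \xi, u)$ is convex, while $\xi \mapsto \xi(\log\xi - 1)$ has second derivative $1/\xi \geq 1/R_\varepsilon$ on $[r_\varepsilon, R_\varepsilon]$, hence is $(1/R_\varepsilon)$-strongly convex there. Integrating the pointwise strong-convexity inequality produces a deficit proportional to $\varepsilon\, t(1-t)\, \| \ell_1 - \ell_2 \|_{L^2(U, \eta)}^2$. In the regime $1 \leq p \leq 2$, Jensen's inequality (recall $\eta \in \cP(U)$) yields $\| \ell_1 - \ell_2 \|_{L^p(U, \eta)} \leq \| \ell_1 - \ell_2 \|_{L^2(U, \eta)}$, so the $L^2$-deficit controls $\| \ell_1 - \ell_2 \|_{L^p(U, \eta)}^2$ and delivers strong convexity in $L^p$. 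For $p > 2$, the pointwise bound $|\ell_1(u) - \ell_2(u)| \leq R_\varepsilon - r_\varepsilon$ yields $(\ell_1 - \ell_2)^2 \geq (R_\varepsilon - r_\varepsilon)^{2-p} |\ell_1 - \ell_2|^p$; integrating converts the $L^2$-deficit into a deficit proportional to $\| \ell_1 - \ell_2 \|_{L^p(U, \eta)}^p$, which is the expected $p$-uniform convexity.

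Finally, for the Lipschitz estimate \eqref{e:better_Lip} I would telescope
\[
G_{\nu_1}(x_1, \ell_1) - G_{\nu_2}(x_2, \ell_2) = \bigl[ G_{\nu_1}(x_1, \ell_1) - G_{\nu_1}(x_1, \ell_2) \bigr] + \bigl[ G_{\nu_1}(x_1, \ell_2) - G_{\nu_2}(x_2, \ell_2) \bigr].
\]
In the first bracket, the entropy piece is Lipschitz in $\ell$ uniformly on $C_\varepsilon$ (the derivative $\log$ is bounded on $[r_\varepsilon, R_\varepsilon]$) and the $F$-piece is Lipschitz in $\ell$ by (F3); both bound the difference pointwise by a constant times $|\ell_1 - \ell_2|$, whose integral against $\eta$ is bounded by $\| \ell_1 - \ell_2 \|_{L^p(U, \eta)}$ via Jensen. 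In the second bracket the entropy term cancels, and the integrand is Lipschitz in $(x, \nu)$ uniformly in $(\xi, u)$ thanks to (F4), which delivers the $|x_1 - x_2| + \mathcal{W}_1(\nu_1, \nu_2)$ factors after integration on the probability space $(U, \eta)$. The most delicate point throughout is the Fréchet differentiability for small $p$, where the naive bound $\| h \|_{L^2(U, \eta)}^2$ on the Taylor remainder does not obviously beat $\| h \|_{L^p(U, \eta)}$; it is precisely the $L^\infty$-boundedness of $C_\varepsilon$ that makes the argument go through.
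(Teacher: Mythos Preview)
Your proposal is correct and takes essentially the same approach as the paper's proof: Fr\'echet differentiability via the Lipschitz continuity of $\partial_\xi F_\nu$ (from~(F2)) and of $\log$ on $[r_\varepsilon,R_\varepsilon]$; an $L^2$-strong convexity deficit coming from the pointwise strong convexity of $\xi\mapsto\xi\log\xi$ on $[r_\varepsilon,R_\varepsilon]$ combined with the convexity of $F_\nu(x,\cdot,u)$, then transferred to $L^p$ via H\"older for $p\le 2$ and via the $L^\infty$-bound $\|\ell_1-\ell_2\|_{L^p}^p\le (R_\varepsilon-r_\varepsilon)^{p-2}\|\ell_1-\ell_2\|_{L^2}^2$ for $p>2$; and the Lipschitz estimate~\eqref{e:better_Lip} directly from~(F3),~(F4) and the local Lipschitz continuity of $\xi\log\xi$. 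Your discussion of the Fr\'echet remainder for small~$p$ is in fact more explicit than the paper's (which writes only an unjustified ``$o(1)$'' factor): your interpolation observation gives $\int_U h^2\,\de\eta\le (R_\varepsilon-r_\varepsilon)^{2-p}\|h\|_{L^p}^{p}=o(\|h\|_{L^p})$ whenever $p>1$.
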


\begin{proof}
For $(x, \nu) \in B_{\varrho} \times  \mathcal{P} (B_{\varrho})$, the functional $G_{\nu} (x,\cdot)$ is well-defined thanks to~$(\mathrm{F1})$. Furthermore,  as a consequence of~$(\mathrm{F2})$, $G_{\nu} (x,\cdot)$ is Fr\'echet-differentiable in $\ell_1\in C_\varepsilon$ with differential
\begin{equation*}
\begin{split}
DG_\nu (x,\ell_1)[\ell_{1} - \ell_{2}] = \int_{U} \big( \partial_\xi F_{\nu} ( x , \ell_1(u) , u ) (\ell_2(u) - \ell_{1}(u)) + \varepsilon ( (\ell_{2} (u) - \ell_{1}(u)) \log (\ell_{1}(u)) \big)  \,\de\eta(u) \,.
\end{split}
\end{equation*}

Indeed, by~$(\mathrm{F2})$ we can simply estimate
\begin{equation*}
    \begin{split}
        | &G_\nu (x,\ell_2) - G_\nu (x,\ell_1) - DG_\nu (x,\ell_1) [\ell_2 - \ell_1] | 
        \\ 
        & = \bigg| \int_U [F_{\nu} ( x , \ell_2(u) ,u ) - F_{\nu} ( x , \ell_1(u) , u ) - \partial_\xi F_{\nu} ( x , \ell_1(u) , u ) ( \ell_2 (u) - \ell_1 (u) ) ] 
        \\
        &
        \qquad + \varepsilon \ell_{2}(u) ( \log (\ell_{2}(u))  - 1) - \varepsilon \ell_{1}(u) ( \log (\ell_{1}(u))  - 1) - \varepsilon ( (\ell_{2} (u) - \ell_{1}(u)) \log (\ell_{1}(u)) \big) \, \de \eta(u) \bigg|
        \\
        &
        \leq o(1) \int_U | \ell_{1}(u) - \ell_{2}(u)| \, \de \eta(u) \leq o(1) \|  \ell_1 - \ell_2 \|_{L^p(U,\eta)}\,.
    \end{split}
\end{equation*}

By the local strong convexity of $t \mapsto \log t$ in $(0, +\infty)$, there exists $\beta_{\varepsilon}>0$ such that for every $\xi_{1}, \xi_{2} \in [r_{\varepsilon}, R_{\varepsilon}]$ and every $t \in [0, 1]$
\begin{displaymath}
(t \xi_{1} + (1-t ) \xi_{2}) \log ( t \xi_{1} + (1-t ) \xi_{2}) \leq t \xi_{1} \log \xi_{1} + (1-t) \xi_{2} \log \xi_{2} - \frac{\beta_{\varepsilon} }{2} t (1-t) | \xi_{1} - \xi_{2}|^{2}\,.
\end{displaymath}
By convexity of $F_{\nu} (x, \cdot, u)$ we deduce that for every $t \in [0, 1]$ and every $\ell_{1}, \ell_{2} \in C_{\varepsilon}$
\begin{equation}
\label{e:convexity_G}
\begin{split}
G_\nu (x, t \ell_1 + (1-t) \ell_2 ) \leq t G_{\nu} (x, \ell_{1}) + (1-t) G_{\nu} (x, \ell_{2}) - \frac{\beta_{\varepsilon}}{2} t (1-t) \| \ell_{1} - \ell_{2} \|^{2}_{L^{2} (U, \eta)}\,.
%\\
%&
%\leq \int_U [t\,F_\Psi(x,\ell_1(u),u)+(1-t)F_\Psi(x,\ell_2(u),u)-\alpha\,t\,(1-t)(\ell_1-\ell_2)^2(u)]\,\de\eta(u) \\ &= t\,G_\Psi(x,\ell_1)+(1-t)G_\Psi(x,\ell_2)-\alpha\,t\,(1-t)\norm{\ell_1-\ell_2}_{L^2(U,\eta)}^2 \\ &\leq  t\,G_\Psi(x,\ell_1)+(1-t)G_\Psi(x,\ell_2)-\alpha\,t\,(1-t)\norm{\ell_1-\ell_2}_{L^p(U,\eta)}^2.
\end{split}
\end{equation}
%\end{comment}
If $p \in [1, 2]$, inequality~\eqref{e:convexity_G} implies the strong convexity of~$G_{\nu} (x, \cdot)$ in~$C_{\varepsilon}$ by H\"older inequality. If $p \in (2, +\infty)$, instead, we infer the uniform convexity of~$G_{\nu} (x, \cdot)$ by combining~\eqref{e:convexity_G} with
\begin{equation}
\label{lpnorm}
\| \ell_1 - \ell_2\|_{L^p(U,\eta)}^p \leq (R_\varepsilon-r_\varepsilon)^{p-2} \| \ell_1 - \ell_2\|_{L^2(U,\eta)}^2\,.
\end{equation}

 Finally, the Lipschitz continuity~\eqref{e:better_Lip} is a direct consequence of property $\mathrm{(F3)}$,~$\mathrm{(F4)}$, and of the local Lipschitz continuity of~$t \mapsto t \log t$ in $(0, +\infty)$.
%\begin{equation*}
%\begin{split}
%| G_{\Lambda_{2}} (x_2 , \ell) - G_{\Lambda_1}(x_1,\ell) | & \leq \int_{U} | F_{\mu_2}(x_2,\ell(u),u) - F_{\mu_1}(x_1,\ell(u),u)| \,\de\eta(u) 
%\\ 
%&
%\leq L_\varrho ( | x_2 - x_1| + \mathcal{W}_1(\mu_2,\mu_1))
%\\
%&
%\leq  L_\varrho ( | x_2 - x_1| + \mathcal{W}_1(\Lambda_2,\Lambda_1))
%\end{split}
%\end{equation*}
%which concludes the proof.
\end{proof}

As a consequence of Proposition \ref{G} we have the following corollary. 
\begin{corollary}
\label{Corollary}
Let $F \colon \mathcal{P}_1(\mathbb{R}^d)\times \mathbb{R}^d \times ( 0 , +\infty) \times U \to (-\infty,+\infty]$ satisfy~$(\mathrm{F1})$--$(\mathrm{F5})$ and let~$G$ be defined as in~\eqref{Gfun}. Then, for every $\varrho>0$, every $\nu \in \mathcal{P}(B_{\varrho}^{Y_\varepsilon})$, every $x\in B_{\varrho}$, and every $1\leq p<+\infty$, there exists a unique solution $\ell_{x, \nu}$ to the minimum problem
\begin{equation}
\label{e:minG}
\min_{\ell\in C_\varepsilon} \,G_{\nu} (x , \ell)\,.
\end{equation}
Moreover, there exists~$\beta_{\varepsilon} >0$ and $A_{\varepsilon, \varrho} >0$ such that for every $x, x_{1}, x_{2} \in B_{\varrho}$, every $\nu, \nu_{1}, \nu_{2} \in \cP(B_{\varrho} )$, and every $\ell \in C_\varepsilon$
\begin{align}
\label{minimality}
& G_{\nu} (x,\ell) - G_\nu (x,\ell_{x, \nu})  \geq \beta_{\varepsilon} \| \ell - \ell_{x, \nu} \|^2_{L^2(U,\eta)}\,,
\\
& 
| G_{\nu_{1}}(x_{1},\ell_{x_{1}, \nu_{1}}) - G_{\nu_{2}}(x_{2} , \ell_{x_{2}, \nu_{2}}) |  \leq D_{ \varrho} \big( | x_{1} - x_{2}| + \mathcal{W}_{1} (\nu_{1}, \nu_{2})\big)\,, \label{minimality_2} 
\\
& 
\| \ell_{x_{1}, \nu_{1}} - \ell_{x_{2}, \nu_{2}}\|_{L^{p}(U, \eta)}  \leq A_{\varepsilon, \varrho} \big( | x_{1} - x_{2}| + \mathcal{W}_{1} (\nu_{1}, \nu_{2})\big) \qquad \text{if $p \in [1, 2]$}\,,\label{minimality_3}
\\
& 
\| \ell_{x_{1}, \nu_{1}} - \ell_{x_{2}, \nu_{2}}\|_{L^{p}(U, \eta)}  \leq A_{\varepsilon, \varrho} \big( | x_{1} - x_{2}| + \mathcal{W}_{1} (\nu_{1}, \nu_{2})\big)^{\frac{1}{p-1}} \qquad \text{if $p \in (2, +\infty)$}\,,\label{minimality_4}
\end{align}
where $D_{\varrho}>0$ is the Lipschitz constant introduced in Proposition~\ref{G}.
%Finally, the map $(x, \Lambda)\ \mapsto G_{\Lambda}(x,\ell_{x, \Lambda})$ is Lipschitz continuous in $B_{\varrho}\times\mathcal{P}(B_{\varrho}^{Y_{\varepsilon}})$ for every $1\leq p<\infty$ with Lipschitz constant~$D_{\varrho}>0$ from Proposition~\ref{G}, while the map $(x, \Lambda) \mapsto \ell_{x, \Lambda}$
%\begin{equation}\label{fasthelp5}
%    B_R\times\mathcal{P}(B_R)\ni (x,\pi_\#\Psi) \mapsto \ell^*
%\end{equation}
%is Lipschitz continuous for $1\leq p\leq 2$ and H\"{o}lder continuous for $2<p< \infty$ in $ B_{\varrho}\times\mathcal{P}(B_{\varrho}^{Y_{\varepsilon}})$.
\end{corollary}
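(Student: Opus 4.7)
My plan is to proceed in three steps: existence and uniqueness of $\ell_{x,\nu}$; the quadratic growth~\eqref{minimality} and the Lipschitz estimate~\eqref{minimality_2} on the minimum value; the continuity estimates~\eqref{minimality_3}--\eqref{minimality_4} on the minimizer itself. For existence and uniqueness I would apply the direct method in $L^{p}(U,\eta)$. The set $C_{\varepsilon}$ is convex and, being uniformly bounded in $L^{\infty}$, is weakly sequentially compact in $L^{p}(U,\eta)$ for every $p\in[1,+\infty)$ (by reflexivity when $p>1$ and by the Dunford--Pettis theorem when $p=1$, since the $L^{\infty}$ bound gives uniform integrability on the probability space $(U,\eta)$). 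By Proposition~\ref{G}, $G_{\nu}(x,\cdot)$ is convex and, thanks to~\eqref{e:better_Lip}, strongly continuous on $C_{\varepsilon}$; Mazur's lemma then yields weak sequential lower semicontinuity, so a minimizer exists, and uniqueness is an immediate consequence of the strict convexity supplied by Proposition~\ref{G}.

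Inequality~\eqref{minimality} would follow from the quantitative convexity~\eqref{e:convexity_G}: choosing $\ell_{2}=\ell_{x,\nu}$, using that $t\ell+(1-t)\ell_{x,\nu}\in C_{\varepsilon}$ by convexity of $C_{\varepsilon}$, and invoking the minimality $G_{\nu}(x,\ell_{x,\nu})\le G_{\nu}(x,t\ell+(1-t)\ell_{x,\nu})$, one divides by $t>0$ and lets $t\to 0^{+}$. Inequality~\eqref{minimality_2} follows from a standard sandwich argument combining minimality with~\eqref{e:better_Lip}:
\[
G_{\nu_{1}}(x_{1},\ell_{x_{1},\nu_{1}}) \le G_{\nu_{1}}(x_{1},\ell_{x_{2},\nu_{2}}) \le G_{\nu_{2}}(x_{2},\ell_{x_{2},\nu_{2}}) + D_{\varrho}\bigl(|x_{1}-x_{2}|+\mathcal{W}_{1}(\nu_{1},\nu_{2})\bigr),
\]
together with its symmetric counterpart obtained by exchanging the indices.

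The third step is the delicate one: naively combining \eqref{minimality} and \eqref{minimality_2} would produce only a H\"older exponent $1/2$ in $L^{2}$, falling short of~\eqref{minimality_3}, so I would instead exploit first-order optimality. Writing $\ell_{i}^{*}\coloneqq\ell_{x_{i},\nu_{i}}$, each $\ell_{i}^{*}$ minimizes the Fr\'echet-differentiable convex functional $G_{\nu_{i}}(x_{i},\cdot)$ over the convex set $C_{\varepsilon}$, hence $DG_{\nu_{i}}(x_{i},\ell_{i}^{*})[\ell-\ell_{i}^{*}]\ge 0$ for every $\ell\in C_{\varepsilon}$. Setting $\ell=\ell_{j}^{*}$ with $j\neq i$ and summing the two resulting inequalities yields
\[
\bigl(DG_{\nu_{1}}(x_{1},\ell_{2}^{*}) - DG_{\nu_{1}}(x_{1},\ell_{1}^{*})\bigr)[\ell_{2}^{*}-\ell_{1}^{*}] \le \bigl(DG_{\nu_{1}}(x_{1},\ell_{2}^{*}) - DG_{\nu_{2}}(x_{2},\ell_{2}^{*})\bigr)[\ell_{2}^{*}-\ell_{1}^{*}].
\]
The left-hand side is bounded below by $(\varepsilon/R_{\varepsilon})\|\ell_{2}^{*}-\ell_{1}^{*}\|_{L^{2}(U,\eta)}^{2}$: the contribution of $\partial_{\xi} F_{\nu_{1}}$ is nonnegative by convexity of $F_{\nu_{1}}(x_{1},\cdot,u)$, while the $\varepsilon\log$ contribution is estimated pointwise by the mean value theorem using the bounds $r_{\varepsilon}\le\ell_{i}^{*}(u)\le R_{\varepsilon}$. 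The right-hand side is bounded above via~$(\mathrm{F4})$ by $\Gamma_{\varrho}\bigl(|x_{1}-x_{2}|+\mathcal{W}_{1}(\nu_{1},\nu_{2})\bigr)\,\|\ell_{2}^{*}-\ell_{1}^{*}\|_{L^{1}(U,\eta)}$. For $p\in[1,2]$, H\"older's inequality on $(U,\eta)$ gives $\|\cdot\|_{L^{1}}\le\|\cdot\|_{L^{p}}\le\|\cdot\|_{L^{2}}$, and the resulting inequality simplifies directly to the Lipschitz estimate~\eqref{minimality_3}. For $p\in(2,+\infty)$, I would replace the $L^{2}$-lower bound on the left by the stronger $\|\ell_{2}^{*}-\ell_{1}^{*}\|_{L^{p}(U,\eta)}^{p}/(R_{\varepsilon}-r_{\varepsilon})^{p-2}$ provided by~\eqref{lpnorm}, so that the inequality takes the form $c\,\|\ell_{2}^{*}-\ell_{1}^{*}\|_{L^{p}}^{p}\le C\bigl(|x_{1}-x_{2}|+\mathcal{W}_{1}(\nu_{1},\nu_{2})\bigr)\|\ell_{2}^{*}-\ell_{1}^{*}\|_{L^{p}}$, yielding precisely the H\"older exponent $1/(p-1)$ of~\eqref{minimality_4}. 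The main obstacle is exactly this mismatch of scales: the strong convexity of $G_{\nu}(x,\cdot)$ lives in $L^{2}$ (coming entirely from the entropy term, since $F$ itself is only convex), whereas the estimates must be formulated in $L^{p}$; reconciling the two requires both the degeneracy-free pointwise bound $r_{\varepsilon}\le\ell\le R_{\varepsilon}$ (to apply the mean value theorem to $\log$) and the comparison~\eqref{lpnorm} (to convert the $L^{2}$-growth into the sharp $L^{p}$-growth).
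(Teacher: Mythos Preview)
Your proposal is correct and follows essentially the same route as the paper: existence and uniqueness from (strong) convexity on the convex set $C_\varepsilon$, the quadratic growth~\eqref{minimality} from strong convexity at the minimizer, the sandwich argument for~\eqref{minimality_2}, and the first-order variational inequality combined with the $L^2$-monotonicity of $DG$ (coming from the entropy term) together with~$(\mathrm{F4})$ for~\eqref{minimality_3}--\eqref{minimality_4}. Your treatment of the case $p>2$ via~\eqref{lpnorm} cleanly recovers the stated exponent $1/(p-1)$.
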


\begin{proof}
The existence and uniqueness to the minimum problem is a direct consequence of the strong and uniform convexity of $G_\nu ( x,\cdot)$ and of the convexity of $C_\varepsilon$. Then, by the minimality of $\ell_{x, \nu}$ and by the local strong convexity of~$t \mapsto t \log t$, there exists $\beta_{\varepsilon}>0$ such that for every $\ell \in C_{\varepsilon}$
\begin{equation*}
\begin{split}
G_\nu (x, \ell) - G_\nu (x,\ell_{x, \nu}) & \geq \underbrace{DG_\nu(x, \ell_{x, \nu})[ \ell - \ell_{x, \nu}]}_{\geq 0} + \beta_{\varepsilon} \| \ell - \ell_{x, \nu} \|^2_{L^2(U,\eta)} \geq \beta_{\varepsilon}\| \ell - \ell_{x, \nu}\|^2_{L^2(U,\eta)}\,,
\end{split}
\end{equation*}
which proves~\eqref{minimality}.

Let us now fix $x_1, x_2\in B_{\varrho}$, $\nu_{1}, \nu_{2} \in \cP_1(B_{\varrho})$, and let $\ell_i \in C_{\varepsilon}$ be the solutions to 
\begin{displaymath}
\min_{\ell\in C_\varepsilon} G_{\nu_{i} }(x_i, \ell) \qquad \text{for $i = 1, 2$.}
\end{displaymath}
Without loss of generality, we may assume that $G_{\nu_{2}}(x_2,\ell_2) \geq G_{\nu_{1} }(x_1,\ell_1)$. Using the minimality of~$\ell_2$ and applying Proposition~\ref{G} we get that
\begin{equation*}
\begin{split}
| G_{\nu_{2}} (x_2,\ell_2) - G_{\nu_{1}}(x_1,\ell_1) | & = G_{\nu_{2}}(x_2,\ell_2 ) - G_{\nu_{2}}(x_2,\ell_1) + G_{\nu_{2}} (x_2,\ell_1) -  G_{\nu_{1}}(x_1,\ell_1) 
\\ 
&
\leq G_{\nu_{2}}(x_2,\ell_1) - G_{\nu_{1}}(x_1,\ell_1) 
\\ 
&
\leq D_\varrho ( |x_2-x_1| + W_1(\nu_{1} , \nu_{2}))\,,
\end{split}
\end{equation*}
which yields~\eqref{minimality_2}.
%Otherwise, if $G_{\Psi_2}(x_2,\ell_2^*) \leq G_{\Psi_1}(x_1,\ell_1^*)$, with a similar line of reasoning, we reach the same result:
%\begin{equation*}
%\begin{split}
%\vert G_{\Psi_2}(x_2,\ell_2^*)-G_{\Psi_1}(x_1,\ell_1^*) \vert & =G_{\Psi_1}(x_1,\ell_1^*)-G_{\Psi_1}(x_1,\ell_2^*)+G_{\Psi_1}(x_1,\ell_2^*)-G_{\Psi_2}(x_2,\ell_2^*)  \\ &\leq G_{\Psi_1}(x_1,\ell_2^*)- G_{\Psi_2}(x_2,\ell_2^*) \\ &\leq L_R\,(|x_2-x_1|+W_1(\pi_\#\Psi_2,\pi_\#\Psi_1)),
%\end{split}
%\end{equation*} 
%which gives us the Lipshitz conitnuity of \eqref{fasthelp4}. 

Since $G_\nu(x,\cdot)$ is strongly convex in~$C_{\varepsilon}$  for $p=2$, we have that there exists $\gamma_{\varepsilon}>0$ such that
\begin{equation*}
    \big(DG_{\nu_{2}}(x_2,\ell_{2}) - DG_{\nu_{2}}(x_2,\ell_1) \big) [\ell_2-\ell_1] \geq \gamma_{\varepsilon} \| \ell_2 - \ell_1 \|^2_{L^2(U,\eta)}\,.
\end{equation*}
By minimality, we have that 
\begin{equation*}
    DG_{\nu_{2}}(x_2,\ell_2) [\ell_2 - \ell_1] \leq 0 \leq DG_{\nu_{1}}(x_1,\ell_1)[\ell_2 - \ell_1]\,.
\end{equation*}
Therefore, property $\mathrm{(F4)}$ yields 
\begin{equation}
\label{e:Lipschitz-ell}
\begin{split}
     \gamma_{\varepsilon} \| \ell_2 - \ell_1 \|^2_{L^2(U,\eta)} & \leq \big( DG_{\nu_{1}}(x_1,\ell_1) - DG_{\nu_{2}} (x_2,\ell_1)\big) [\ell_2 - \ell_1] 
     \\ 
     &
     = \int_U \big( \partial_\xi F_{\nu_{1}} ( x_1 , \ell_1(u) , u) - \partial_\xi F_{\nu_{2}} ( x_2 , \ell_1 (u) , u ) \big) (\ell_2(u)  - \ell_1 (u)) \,\de \eta(u)
     \\ 
     &
     \leq \Gamma_\varrho (| x_2 - x_1 | + \mathcal{W}_1( \nu_{1}, \nu_{2}) \big) \int_U |\ell_2 (u) - \ell_1(u) | \, \de \eta(u) 
     \\ 
     &
     \leq \Gamma_\varrho \big( | x_2 - x_1 | + \mathcal{W}_1(\nu_{1},\nu_{2}) \big) \| \ell_2 - \ell_1\|_{L^2(U,\eta)}\,.
\end{split}
\end{equation}
If $p \in [1, 2]$, \eqref{e:Lipschitz-ell} and H\"older inequality imply the Lipschitz continuity of~$(x, \nu) \mapsto \ell_{x, \nu}$ in $B_{\varrho} \times \cP(B_{\varrho})$. If $p \in (2, +\infty)$, arguing as in~\eqref{lpnorm} and using once again H\"older inequality we deduce from~\eqref{e:Lipschitz-ell} that
%Finally, if $p\leq 2$ then 
%\begin{equation*}
%    \vert\vert\ell_2^*-\ell_1^*\vert\vert_{L^p(U,\eta)} \leq \vert\vert\ell_2^*-\ell_1^*\vert\vert_{L^2(U,\eta)} \leq \frac{L_R}{\alpha}\,(\vert x_2-x_1\vert + W_1(\pi_\#\Psi_1,\pi_\#\Psi_2)),
%\end{equation*}
%otherwise remembering \eqref{lpnorm}
\begin{equation*}
    \| \ell_2 - \ell_1\|^{p}_{L^p(U,\eta)} \leq \Big(\frac{\Gamma_\varrho}{\gamma_{\varepsilon}}\Big)^{2} \,(R_\varepsilon - r_\varepsilon)^{p-2} \big ( | x_2 - x_1 | + \mathcal{W}_1(\nu_{1} , \nu_{2}) \big)^{2}\,.
\end{equation*}
Setting 
$$A_{\varepsilon, \varrho} \coloneqq \max\, \bigg \{ \frac{\Gamma_{\varrho}}{\gamma_{\varepsilon}} , \bigg( \frac{\Gamma_{\varrho}}{\gamma_{\varepsilon}} \bigg)^{\frac{2}{p}} (R_{\varepsilon} - r_{\varepsilon})^{\frac{p-2}{p}} \bigg\}
$$
 we get~\eqref{minimality_3} and \eqref{minimality_4}.
\end{proof}

As intermediate step towards the main result of this section we have the following lemma, where we estimate the behavior, as $\lambda \to +\infty$, of the labels~$\ell^{i}_{t}$ in system~\eqref{two_time_scale_system}. For later use, we introduce here the map $\Delta\colon \mathbb{R}^{d} \times \mathcal{P}_{1}(\mathbb{R}^{d}) \to C_{\varepsilon}$ defined as
\begin{displaymath}
\Delta (x, \nu) \coloneqq \argmin_{\ell \in C_{\varepsilon}} \, G_{\nu} (x, \ell)\,.
\end{displaymath}
In particular, by Proposition~\ref{G} the map $\Delta$ is Lipschitz continuous on~$B_{\varrho} \times \mathcal{P}(B_{\varrho})$ for every $\varrho>0$.

\begin{lemma}
\label{lemma}
Let $v_{\Psi}$ satisfy~$\mathrm{(v1)}$--$\mathrm{(v3)}$, let $F \colon \mathcal{P}_1(\mathbb{R}^d)\times \mathbb{R}^d \times ( 0 , +\infty) \times U \to (-\infty,+\infty]$ satisfy~$(\mathrm{F1})$--$(\mathrm{F5})$, let the operator~$\mathcal{T}_{\Psi}$ be defined as in~\eqref{fast_reaction}, and let~$G$ be as in~\eqref{Gfun}.
% Consider system  \eqref{two_time_scale_system} and suppose that the dynamics for the labels can be written as in \eqref{fast_reaction}. 
For $ \lambda \in (0, +\infty)$, $N \in \mathbb{N}$, $\delta>0$, and $\bar{\by} = (\bar{y}^{1}, \ldots, \bar{y}^{N}) \in (B_{\delta}^{Y^{\varepsilon}})^{N}$, let~$\{y^{i}_{\lambda}\}_{i=1}^{N}$ denote the solutions to the Cauchy problem~\eqref{two_time_scale_system} with initial conditions~$\bar{y}^{i}$ and corresponding empirical measure~$\Lambda^{N}_{\lambda, t} \coloneqq \frac{1}{N} \sum_{i=1}^{N} \delta_{y^{i}_{\lambda, t}}$, let $\bar\Lambda^{N}_{0} \coloneqq \frac{1}{N} \sum_{i=1}^{N} \delta_{\bar{y}^{i}}$, and let $\mu^{N}_{\lambda, t} \coloneqq \pi_{\#} \Lambda^{N}_{\lambda, t}$ and $\bar{\mu}^{N} \coloneqq \pi_{\#} \bar{\Lambda}^{N}$. Then, the following facts hold:
\begin{itemize}
\item[$(i)$] there exists $\varrho>0$ (depending only on~$\delta$ and~$\varepsilon$) such that $\Lambda^{N}_{\lambda, t} \in \mathcal{P}(B^{Y_{\varepsilon}}_{\varrho})$ for every $t \in [0, T]$;

\item[$(ii)$] there exists two positive constants~$\omega_{\varepsilon, \delta}$ and~$\gamma_{\varepsilon}$ (independent of~$\lambda$) such that for every  $p \in [1, 2]$ and every~$t\in (0,T]$
\begin{equation}
\label{fasthelp6}
   \| \ell_{\lambda, t}^{i } - \Delta( x^{i}_{\lambda, t}, \mu^{N}_{\lambda, t}) \|_{L^{p}(U, \eta)} \leq  \omega_{\varepsilon, \delta} \bigg( \frac{1}{\sqrt{\lambda}} +  e^{-\lambda \gamma_{\varepsilon} T} \bigg) \,,
\end{equation}
while for $p \in (2, +\infty)$ it holds
\begin{equation}
\label{fasthelp6.1}
 \frac{ \| \ell_{\lambda, t}^{i } - \Delta ( x^{i}_{\lambda, t}, \mu^{N}_{\lambda, t})  \|^{p}_{L^{p}(U, \eta)}}{(R_{\varepsilon} - r_{\varepsilon})^{p-2}} \leq  2 \omega_{\varepsilon, \delta}^{2} \bigg( \frac{1}{\lambda} + e^{-2 \lambda \gamma_{\varepsilon} T} \bigg)\,
\end{equation}
\end{itemize}
\end{lemma}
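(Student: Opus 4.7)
For part~(i), the invariance of $C_{\varepsilon}$ given by Proposition~\ref{regularity_properties_entropic_vector_field}(4) yields $\ell^{i}_{\lambda,t} \in C_{\varepsilon}$ for all $t \in [0,T]$, hence $\| \ell^{i}_{\lambda,t} \|_{L^{p}(U,\eta)} \leq R_{\varepsilon}$ uniformly in $\lambda$. Since $\lambda$ does not enter the $x$-equation, assumption~$\mathrm{(v3)}$ combined with $m_{1}(\Lambda^{N}_{\lambda,t}) \leq \max_{j} |x^{j}_{\lambda,t}| + R_{\varepsilon}$ gives $|\dot x^{i}_{\lambda,t}| \leq M_{v}(1 + 2 \max_{j} |x^{j}_{\lambda,t}| + 2 R_{\varepsilon})$, and Grönwall's lemma provides the desired bound on $\max_{i,t} |x^{i}_{\lambda,t}|$ depending only on $\delta$, $\varepsilon$, $T$ and the structural constants.

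For part~(ii), motivated by the (formal) Fisher--Rao gradient-flow structure of the label equation at frozen $(x,\mu)$, I introduce the Lyapunov function
\begin{equation*}
E^{i}(t) \coloneqq G_{\mu^{N}_{\lambda,t}}(x^{i}_{\lambda,t}, \ell^{i}_{\lambda,t}) - G_{\mu^{N}_{\lambda,t}}(x^{i}_{\lambda,t}, \Delta_{i}(t)) \geq 0, \qquad \Delta_{i}(t) \coloneqq \Delta(x^{i}_{\lambda,t}, \mu^{N}_{\lambda,t}),
\end{equation*}
which by~\eqref{minimality} satisfies $E^{i}(t) \geq \beta_{\varepsilon}\| \ell^{i}_{\lambda,t} - \Delta_{i}(t) \|_{L^{2}(U,\eta)}^{2}$. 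Writing for brevity $\ell = \ell^{i}_{\lambda,t}$, $x = x^{i}_{\lambda,t}$, $\mu = \mu^{N}_{\lambda,t}$, and setting $h(u) \coloneqq \partial_{\xi} F_{\mu}(x, \ell(u), u) + \varepsilon \log \ell(u)$, $\bar h \coloneqq \int_{U} h\, \ell\, \de\eta$, the operator $\mathcal{T}_{\mu} + \varepsilon \mathcal{H}$ rewrites as $\ell(\bar h - h)$, which together with $DG_{\mu}(x, \ell)[\psi] = \int_{U} h\, \psi\, \de\eta$ gives
\begin{equation*}
DG_{\mu}(x, \ell)[\dot\ell] = -\lambda \int_{U} (h - \bar h)^{2}\, \ell\, \de\eta.
\end{equation*}
The remaining contributions to $\dot E^{i}$, coming from the time dependence of $x^{i}_{\lambda,t}$ and $\mu^{N}_{\lambda,t}$ both in $G$ and in the minimum value, are bounded (a.e.\ in $t$) by a constant $C = C(\varepsilon, \delta, T)$ uniform in $\lambda$, thanks to the Lipschitz estimates~\eqref{e:better_Lip} and~\eqref{minimality_2} together with the uniform control on $|\dot x^{j}_{\lambda,t}|$ and on $\mathcal{W}_{1}(\mu^{N}_{\lambda,t+h}, \mu^{N}_{\lambda,t})$ provided by part~(i).

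The core of the argument is a Polyak--Łojasiewicz-type inequality. Using the strong convexity of $G_{\mu}(x,\cdot)$ with the test $\Delta_{i}(t)$ and the identity $\int_{U}(\ell - \Delta_{i}(t))\, \de\eta = 0$,
\begin{equation*}
E^{i}(t) \leq \int_{U} (h - \bar h)(\ell - \Delta_{i}(t))\, \de\eta - \beta_{\varepsilon} \| \ell - \Delta_{i}(t)\|_{L^{2}}^{2},
\end{equation*}
and Young's inequality together with the pointwise bound $\ell \geq r_{\varepsilon}$ yields $E^{i}(t) \leq \frac{1}{4 \beta_{\varepsilon} r_{\varepsilon}} \int_{U}(h - \bar h)^{2}\ell\, \de\eta$. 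Setting $\gamma_{\varepsilon} \coloneqq 2\beta_{\varepsilon} r_{\varepsilon}$, this gives the ODI $\dot E^{i}(t) \leq -2\lambda \gamma_{\varepsilon} E^{i}(t) + C$, and Grönwall produces $E^{i}(t) \leq E^{i}(0)\, e^{-2\lambda \gamma_{\varepsilon} t} + C/(2\lambda \gamma_{\varepsilon})$. Since $E^{i}(0)$ is bounded in terms of $\varepsilon$ and $\delta$ by~\eqref{e:better_Lip}, combining with $\| \ell - \Delta_{i}(t)\|_{L^{2}}^{2} \leq E^{i}(t)/\beta_{\varepsilon}$ and the embedding $\| \cdot \|_{L^{p}} \leq \| \cdot \|_{L^{2}}$ for $p \in [1,2]$ (H\"older, since $\eta \in \cP(U)$) produces~\eqref{fasthelp6}; for $p > 2$, the pointwise control $|\ell - \Delta_{i}(t)| \leq R_{\varepsilon} - r_{\varepsilon}$ gives $\| \ell - \Delta_{i}(t) \|_{L^{p}}^{p} \leq (R_{\varepsilon} - r_{\varepsilon})^{p-2} \| \ell - \Delta_{i}(t) \|_{L^{2}}^{2}$, which plugged into the Grönwall estimate for $E^{i}$ yields~\eqref{fasthelp6.1}.

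\textbf{Main obstacle.} The most delicate point is the rigorous differentiability of $E^{i}(t)$. While $t \mapsto \ell^{i}_{\lambda,t}$ is $C^{1}$ into $L^{p}(U,\eta)$ and $G_{\mu}(x,\cdot)$ is Fréchet differentiable, both the $(x,\mu)$-dependence of $G$ and the minimum value $t \mapsto G_{\mu^{N}_{\lambda,t}}(x^{i}_{\lambda,t}, \Delta_{i}(t))$ are known only to be Lipschitz through~\eqref{e:better_Lip}--\eqref{minimality_2}. The derivative computation above must therefore be interpreted in the a.e.\ sense (via absolute continuity of the real-valued curve $E^{i}$) or through an integrated form of the differential inequality; the $\mathcal{W}_{1}$-time derivative of the atomic measure $\mu^{N}_{\lambda,t}$ is to be estimated via $\mathcal{W}_{1}(\mu^{N}_{\lambda,t+h}, \mu^{N}_{\lambda,t}) \leq \frac{1}{N}\sum_{j}|x^{j}_{\lambda,t+h} - x^{j}_{\lambda,t}|$, which is uniform in $\lambda$ by part~(i).
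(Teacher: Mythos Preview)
Your proposal is correct and follows essentially the same approach as the paper: both proofs establish part~(i) via $\mathrm{(v3)}$, the uniform bound $\ell^{i}_{\lambda,t}\in C_\varepsilon$, and Gr\"onwall, and both prove part~(ii) by introducing the same Lyapunov energy $E^{i}(t)=G_{\mu^N_{\lambda,t}}(x^i_{\lambda,t},\ell^i_{\lambda,t})-G_{\mu^N_{\lambda,t}}(x^i_{\lambda,t},\Delta_i(t))$, computing $DG_\mu(x,\ell)[\dot\ell]=-\lambda\int_U(h-\bar h)^2\ell\,\de\eta$, and closing a differential inequality of the form $\dot E^i\le -c\lambda E^i+C$ via a Polyak--\L ojasiewicz-type estimate. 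The only noteworthy difference is in how the PL inequality is obtained: the paper chains the convexity bound $E^i\le\|h-\bar h\|_{L^2}\|\ell-\Delta_i\|_{L^2}$ with~\eqref{minimality} to get $\beta_\varepsilon E^i\le\|h-\bar h\|_{L^2}^2$, whereas you use the strong-convexity inequality $E^i\le\int(h-\bar h)(\ell-\Delta_i)\,\de\eta-\beta_\varepsilon\|\ell-\Delta_i\|_{L^2}^2$ together with Young's inequality; your route yields a constant that is better by a factor~$4$, but the argument is structurally identical. Your identification of the main obstacle (only a.e.\ differentiability of $E^i$, handled via Lipschitz continuity of $(x,\mu)\mapsto G_\mu$ and of the minimum value) matches the paper's explicit difference-quotient computation in the step leading to the chain-rule inequality for $\frac{\de}{\de t}E^i$.
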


\begin{proof}
The proof consists of two steps. In the first step, we obtain some useful estimates and  properties of system \eqref{two_time_scale_system}, which we then use in the second step to prove~\eqref{fasthelp6}. Along the proof, we drop the index $\lambda$, as we always argue for a fixed parameter $\lambda \in (0, +\infty)$.

\noindent \textit{Step 1.}
We first show that the player's' locations $x^{i}_{t}$ are bounded in~$\mathbb{R}^{d}$ independently of~$\lambda$, $N$, and~$t$. Indeed, using $\mathrm{(v3)}$ and recalling that $m_1(\Lambda_{t}^{N})\leq \max_{i=1, \ldots, N} \|y^{i}_{t} \|_{\overline{Y}}$ and that $ \ell^{i}_{t} \in C_{\varepsilon}$, we have that for every $i=1, \ldots, N$
\begin{equation}
\label{e:some-Gron}
\begin{split}
| x_{t}^{i} | &\leq | \bar{x}^{i} | + \int_0^T | v_{\Lambda_{s}^{N}} ( x_{s}^{i} , \ell_{s}^{i})| \, \de s 
\leq | \bar{x}^{i} | + \int_0^T  M_v ( 1 + | x_s^{i} | + \| \ell_s^{i} \|_{L^{p}(U, \eta)} + m_1(\Lambda_s^{N}) ) \, \de s 
\\ 
&
\leq  | \bar{x}^{i} | + M_v (1 + R_\varepsilon ) T + \int_0^T  M_v ( | x_s^{i} | + \max_{i=1, \ldots, N}  \| y_s^{i} \|_{\overline{Y}^N}) \, \de s 
\\ 
&
\leq \delta + M_v (1 + 2R_\varepsilon) T + \int_0^T  2M_v  \max_{j=1, \ldots, N} | x_s^{j} | \,\de s\,.
\end{split}
\end{equation}
Taking the maximum over $i=1, \ldots, N$ on the left-hand side of~\eqref{e:some-Gron}, by Gr\"onwall inequality we get 
%\begin{equation*}
%\max_{i=1, \ldots, N} |x_t^{i} |\leq \max_{i=1, \ldots, N} |\bar{x}^{i} | + M_v (1+2R_\varepsilon)T + 2M_v\int_0^T \vert\vert \boldsymbol{x}_s^{\varepsilon}\vert\vert_{\overline{Y}^N}\,\de s,
%\end{equation*}
%and applying Gr\"{o}nwall's Lemma 
\begin{equation}
\label{e:some-Gron2}
\max_{i=1, \ldots, N} |x_t^{i} |\leq  \Big( \delta + M_v (1 + 2R_\varepsilon) T \Big) e^{2M_v T} \eqqcolon R_{\delta, \varepsilon}\,.
\end{equation}
%Then for each agent's location
%\begin{equation}\label{fasthelp7}
%\vert x_t^{\varepsilon,i} \vert < N\,(\vert\vert\boldsymbol{x}_0^{\varepsilon}\vert\vert_{\overline{Y}^N} + M_v\,(1+2R_\varepsilon)\,T)\,e^{2M_v\,T}\eqqcolon R.
%\end{equation}
As a consequence of~\eqref{e:some-Gron2}, setting $\varrho \coloneqq R_{\delta, \varepsilon} + R_{\varepsilon}$ we have that $(x^{i}_{t} , \Lambda^{N}_{t}) \in B_{\varrho} \times \mathcal{P}(B^{Y_{\varepsilon}}_{\varrho})$ for every $N$, every $i$, and every $t \in [0, T]$. In particular, this proves~$(i)$. Moreover, by~$\mathrm{(v3})$ and~\eqref{e:some-Gron2} the map $t\mapsto x_t^{i}$ for every $i=1, \ldots, N$, with Lipschitz constant only depending on~$\varrho$ and on~$M_{v}$. Indeed, for every $t_{1} < t_{2} \in [0, T]$ and every $i$ we have that
\begin{equation}
\label{fasthelp8}
\begin{split}
| x_{t_2}^{i } - x_{t_1}^{i} | & \leq \int_{t_1}^{t_2} | v_{\Lambda_s^{N}}(x_s^{i} , \ell_s^{i})| \,\de s 
\leq M_v (1 + 2\varrho ) | t_2 - t_1 | \eqqcolon A_{\varrho} | t_2 - t_1 |\,.
\end{split}
\end{equation}
Therefore, also the map $t\mapsto \mu^{N}_{t}$ is Lipschitz continuous, with Lipschitz constant $A_{\varrho}$. Up to a re-definition of~$A_{\varrho}$, by~$\mathrm{(F3})$ and the properties of~$\mathcal{H}$, we may as well assume that~$\ell^{i}_{t}$ is Lipschitz continuous in~$[0, T]$, with Lipschitz constant~$A_{\varrho}$.
%\begin{equation}
%\begin{split}\label{fasthelp9}
%W_1&(\pi_\#\Lambda_{t_1}^{\varepsilon,N},\pi_\#\Lambda_{t_2}^{\varepsilon,N}) \leq \vert\vert \boldsymbol{x}_{t_2}^{\varepsilon}-\boldsymbol{x}_{t_1}^{\varepsilon}\vert\vert_{\overline{Y}^N} \leq A\,\vert t_2-t_1\vert.
%\end{split}
%\end{equation}

\noindent \textit{Step 2.}
We now proceed with the proof of~\eqref{fasthelp6}. Using the convexity of $G_{\mu_{t}^{N}}(x_t^{i},\cdot)$ and the fact that $\ell^{i}_{t}, \Delta( x^{i}_{t}, \mu^{N}_{t})  \in C_{\varepsilon}$, we have that 
\begin{equation*}
\begin{split}
&G_{\mu_{t}^{N}} ( x_t^{i} , \ell_t^{i} ) - G_{\mu_{t}^{N}} ( x_t^{i} , \Delta( x_t^{i}, \mu^{N}_{t})  ) \leq DG_{\mu_{t}^{N}} (x_t^{i} , \ell_t^{i}) [ \ell_t^{i} - \Delta( x^{i}_{t}, \mu^{N}_{t})  ]
\\
&
= \int_U \big(  \partial_\xi F_{\mu^{N}_{t} } (x_t^{i} , \ell_t^{i} (u) , u ) + \varepsilon \log ( \ell^{i}_{t} (u) ) \big) (\ell_t^{i} (u) - \Delta( x^{i}_{t}, \mu^{N}_{t}) (u) ) \, \de \eta(u) 
\\
&
= \int_U \bigg( \partial_\xi F_{\mu^{N}_{t} } ( x_t^{i} , \ell_t^{i} (u) , u ) - \int_U \partial_\xi F_{\mu_t^{N} } ( x_t^{i} , \ell_t^{i} ( u' ) , u' ) \ell_t^{i} (u') \, \de\eta(u') \bigg) (\ell_t^{i} (u) - \Delta( x^{i}_{t}, \mu^{N}_{t})  (u) ) \, \de\eta(u)
\\
&
\qquad + \int_{U}  \varepsilon \big( \log ( \ell^{i}_{t} (u) ) -  I(\ell^{i}_{t} ) \big) (\ell_t^{i} (u) -\Delta( x^{i}_{t}, \mu^{N}_{t})  (u) ) \, \de \eta(u) 
 \\
&
\leq \bigg\|\partial_\xi F_{\mu_t^{N}} ( x_t^{i} , \ell_t^{i} , \cdot ) + \varepsilon  \log ( \ell^{i}_{t} ) - \int_U \partial_{\xi} F_{\mu_t^{N}} ( x_t^{i} , \ell_t^{i} (u') , u' ) \ell_t^{i}(u') \, \de \eta(u') - \varepsilon I(\ell^{i}_{t}) \bigg\|_{L^2(U,\eta)}  \times
\\
&
\qquad \times \| \ell_t^{i} -\Delta( x^{i}_{t}, \mu^{N}_{t})  \|_{L^2(U,\eta)}\,. 
\end{split}
\end{equation*}
The above chain of inequalities, together with~\eqref{minimality}, leads us to 
\begin{equation}
\label{fasthelp3}
\begin{split}
\beta_{\varepsilon} & \big( G_{\mu_{t}^{N}} ( x_t^{i} , \ell_t^{i} ) - G_{\mu_{t}^{N}} ( x_t^{i} , \Delta( x^{i}_{t}, \mu^{N}_{t})  ) \big)
\\
&
\leq \bigg\| \partial_\xi F_{\mu_t^{N}} ( x_t^{i} , \ell_t^{i} , \cdot ) + \varepsilon  \log ( \ell^{i}_{t} ) - \int_U \partial_{\xi} F_{\mu_t^{N}} ( x_t^{i} , \ell_t^{i} (u') , u' ) \ell_t^{i}(u') \, \de \eta(u') - \varepsilon I(\ell^{i}_{t})  \bigg\| _{L^2(U,\eta)}^2\,.
\end{split}
\end{equation}

By Proposition~\ref{G}, by~\eqref{fasthelp8}, and by the bound $y^{i}_{t} =(x^{i}_{t}, \ell^{i}_{t}) \in B_{\varrho}^{Y_{\varepsilon}}$ for $i = 1, \ldots, N$, for every $t < s \in (0, T)$ we may estimate
\begin{equation}
\label{fasthelp10.1}
\begin{split}
& \big( G_{\mu_{s}^{N}} ( x_{s}^{i} , \ell_{s}^{i} ) - G_{\mu_{s}^{N}} ( x_{s}^{i} , \Delta ( x^{i}_{s}, \mu^{N}_{s}) ) \big) -  G_{\mu_{t}^{N}} ( x_{t_{1}}^{i} , \ell_{t}^{i} ) - G_{\mu_{t}^{N}} ( x_{t}^{i} , \Delta ( x^{i}_{t}, \mu^{N}_{t} ) ) \big)
\\
&
= \big( G_{\mu_{s}^{N}} ( x_{s}^{i} , \ell_{s}^{i} )-  G_{\mu_{t}^{N}} ( x_{s}^{i} , \ell_{s}^{i} ) \big) + \big(  G_{\mu_{t}^{N}} ( x_{s}^{i} , \ell_{s}^{i} ) - G_{\mu_{t}^{N}} ( x_{t}^{i} , \ell_{t}^{i} )\big)
\\
&
\qquad - \big( G_{\mu_{s}^{N}} ( x_{s}^{i} , \Delta ( x^{i}_{s}, \mu^{N}_{s}) ) - G_{\mu_{t}^{N}} ( x_{t}^{i} ,\Delta ( x^{i}_{t}, \mu^{N}_{t} ) \big)  
\\
&
\leq D_{\varrho} \mathcal{W}_{1} (\mu^{N}_{t} , \mu^{N}_{s}) +  \big(  G_{\mu_{t}^{N}} ( x_{s}^{i} , \ell_{s}^{i} ) - G_{\mu_{t}^{N}} ( x_{t}^{i} , \ell_{t}^{i} )\big)
\\
&
\qquad - \big( G_{\mu_{s}^{N}} ( x_{s}^{i} , \Delta ( x^{i}_{s}, \mu^{N}_{s}) ) - G_{\mu_{t}^{N}} ( x_{t}^{i} ,\Delta ( x^{i}_{t}, \mu^{N}_{t} ) ) \big)  
\\
&
\leq D_{\varrho} A_{\varrho} ( s - t) +  \big(  G_{\mu_{t}^{N}} ( x_{s}^{i} , \ell_{s}^{i} ) - G_{\mu_{t}^{N}} ( x_{t}^{i} , \ell_{t}^{i} )\big)
\\
&
\qquad - \big( G_{\mu_{s}^{N}} ( x_{s}^{i} , \Delta ( x^{i}_{s}, \mu^{N}_{s}) )  - G_{\mu_{t}^{N}} ( x_{t}^{i} , \Delta ( x^{i}_{t}, \mu^{N}_{t} )  ) \big)  \,.
\end{split}
\end{equation}
Since also the map $t \mapsto G_{\mu^{N}_{t}} ( x_t^{i} , \ell_t^{i} ) - G_{\mu_{t}^{N}} ( x_t^{i} , \Delta ( x^{i}_{t}, \mu^{N}_{t} )  )$ is Lipschitz continuous (see Proposition~\ref{G} and Corollary~\ref{Corollary}), and thus differentiable a.e.~in~$[0, T]$, dividing~\eqref{fasthelp10.1} by $s-t$ and passing to the limit as $s \searrow t$ we get by chain rule that for a.e.~$t \in [0, T]$ 
\begin{equation}
\label{fasthelp10}
\begin{split}
\frac{\de}{\de t}  \big( G_{\mu_{t}^{N}} ( x_t^{i} , \ell_t^{i} ) - & G_{\mu_{t}^{N}} ( x_t^{i} , \Delta ( x^{i}_{t}, \mu^{N}_{t} )  ) \big) 
\\
\leq & D_{\varrho}A_{\varrho} +  \underbrace{ \int_{U} \big(\partial_\xi F_{\mu_{t} ^{N}}(x_t^{i} , \ell_t^{i} (u) , u ) + \varepsilon \log (\ell^{i}_{t}(u)) \big)  \dot{\ell}_t^{i} (u) \, \de \eta(u)}_{\textrm{I}} 
\\
&
+ \underbrace{\partial_x G_{\mu_{t}^{N}}( x_t^{i} , \ell_t^{i} )\cdot \dot{x}_t^{i}}_{\textrm{II}} 
\underbrace{-\frac{\de }{\de t}G_{\mu_{t}^{N}} (x_t^{i} , \Delta ( x^{i}_{t}, \mu^{N}_{t} )  )}_{\textrm{III}}
\end{split}
\end{equation}

We now show that the terms ${\textrm{II}}$ and ${\textrm{III}}$ are well-defined and uniformly bounded with respect to~$\lambda \in (0, +\infty)$. Let us start with~$\mathrm{II}$. By~$\mathrm{(F4)}$,~$\mathrm{(F5)}$,~$\mathrm{(v3)}$, and by the fact that $(x^{i}_{t} , \Lambda^{N}_{t}) \in B_{\varrho} \times \mathcal{P}(B^{Y_{\varepsilon}}_{\varrho})$, we get that
\begin{equation}
\label{fasthelp11}
\begin{split}
{\textrm{II}} &= \partial_x G_{\mu_{t}^{N}}(x_t^{i} , \ell_t^{i} ) \cdot \dot{x}_t^{i} 
= \int_U \partial_x F_{\mu_{t}^{N}}(x_t^{i} , \ell_t^{i} (u), u  ) \cdot v_{\Lambda_t^{N}} ( x_t^{i} , \ell_t^{i} ) \,\de \eta(u) 
\\ 
&
\leq \int_U \big| \partial_x F_{\mu_{t}^{N}} (x_t^{i} , \ell_t^{i} (u) , u ) \big|  \, \big|  v_{\Lambda_t^{N}} ( x_t^{i} , \ell_t^{i} ) \big| \,\de \eta(u) 
\leq \int_{U} \Gamma_{\varrho}  M_v( 1 + \| y_t^{i} \|_{\overline{Y}} + m_1(\Lambda_t^{N}) ) \, \de \eta(u) 
\\
&
\leq \Gamma_{\varrho} M_v ( 1 + 2\varrho ) = \Gamma_{\varrho} A_{\varrho} \,.
\end{split}
\end{equation}
%\textcolor{red}{TO CHECK FROM HERE ON. There is a problem with the Lipschitz continuity, I think it is not enough to bound the derivative in $\mathrm{III}$. Basically we want $\tau \mapsto G_{\Lambda_{\tau}} (x, \ell)$ differentiable a.e.~in time, independently of~$(x, \ell)$. It is clear for the replicator case or in the setting of proposition~\ref{integralcase}, up to some differentiability.}
As for $\mathrm{III}$, by~\eqref{minimality_2} of Corollary~\ref{Corollary} and by~\eqref{fasthelp8}, we have that for a.e.~$t \in [0, T]$
%\begin{equation*}
%\begin{split}
% \big|  G_{\Lambda_{\tau_2}^{N}}(x_t^{i } , \ell_t^{i } ) - G_{\Lambda_{\tau_1}^{N}}(x_t^{i} , \ell_t^{i}) \big| 
%\leq D_{\varrho} \mathcal{W}_1(\mu_{\tau_1}^{N},\mu_{\tau_2}^{N}) \leq D_{\varrho}A_{\varrho} | \tau_2-\tau_1 |\,.
%\end{split}
%\end{equation*}
%Therefore, $\tau \mapsto G_{\mu^{N}_{\tau}} (x^{i}_{t}, \ell^{i}_{t})$ is differentiable for almost every $\tau\in (0,T]$ with derivative bounded by its Lipschitz constant, that is,
\begin{equation}
\label{fasthelp12}
{\textrm{III}} \leq \left\vert\frac{\de }{\de t}G_{\mu_{t}^{N}}(x_t^{i} , \Delta ( x^{i}_{t}, \mu^{N}_{t} ) ) \right\vert \leq 2 D_{\varrho}A_{\varrho}\,.
\end{equation}

We now estimate~${\textrm{I}}$ from~\eqref{fasthelp10}. Using~\eqref{two_time_scale_system},~\eqref{fast_reaction}, and \eqref{fasthelp3}, and recalling that $\ell^{i}_{t} \in C_{\varepsilon}$, we obtain that 
\begin{equation}
\label{fasthelp14}
\begin{split}
{\textrm{I}} & =  \int_U \big( \partial_\xi F_{\mu_t^{N}} ( x_t^{i} , \ell_t^{i} (u), u ) + \varepsilon \log (\ell^{i}_{t} (u)) \big) \dot{\ell}_t^{i} (u) \, \de \eta(u) 
\\ 
&
= \int_U  \bigg( \partial_\xi F_{\mu_t^{N}} ( x_t^{i} , \ell_t^{i} (u), u ) - \!\! \int_U \partial_\xi F_{\mu_t^{N}} ( x_t^{i} , \ell_t^{i} ( u' ) , u' ) \ell_t^{i}(u') \, \de \eta ( u' ) 
\\
&
  \qquad\qquad\qquad + \varepsilon \big(  \log (\ell^{i}_{t} (u)) - I(\ell^{i}_{t}) \big) \bigg)\,\dot{\ell}_t^{i}(u)  \de \eta(u)
  \\
  &
  = -\lambda \int_U \bigg( \partial_\xi F_{\mu_t^{N}} ( x_t^{i} , \ell_t^{i} (u) , u) - \int_U \partial_\xi F_{\mu_t^{N}} ( x_t^{i} , \ell_t^{i}(u') , u') \ell_t^{i} (u') \, \de \eta(u')  
  \\
  &
  \qquad\qquad\qquad + \varepsilon \big(  \log (\ell^{i}_{t} (u) ) - I(\ell^{i}_{t}) \big)  \bigg)^2 \ell_t^{i}(u)\,\de\eta(u) 
  \\ 
  &
  \leq  -\lambda  r_\varepsilon \int_U \bigg( \partial_\xi F_{\mu_t^{N}} ( x_t^{i} , \ell_t^{i} (u), u ) - \int_U \partial_\xi F_{\mu_t^{N}} ( x_t^{i} , \ell_t^{i}(u'),u') \ell_t^{i}(u') \, \de\eta(u') 
    \\
  &
  \qquad\qquad\qquad + \varepsilon \big(  \log (\ell^{i}_{t} (u)) - I(\ell^{i}_{t}) \big) \bigg)^2 \, \de \eta (u) 
  \\ 
  &
  \leq - \lambda r_\varepsilon \beta_{\varepsilon} \big( G_{\mu_{t}^{N}} ( x_t^{i} , \ell_t^{i} ) - G_{\mu_{t}^{N}} ( x_t^{i} ,\Delta ( x^{i}_{t}, \mu^{N}_{t} )  ) \big)\,.
\end{split}
\end{equation}

Combining~\eqref{fasthelp10}--\eqref{fasthelp14} and setting $K_{\varrho} \coloneqq (\Gamma_{\varrho} + 3D_{\varrho})A_{\varrho}$, we deduce that for a.e.~$t \in [0, T]$
\begin{equation*}
\begin{split}
\frac{\de }{\de t} \big(G_{\mu_{t}^{N}} ( x_t^{i} ,  \ell_t^{i}) - G_{\mu_{t}^{N}} ( x_t^{i} , \Delta ( x^{i}_{t}, \mu^{N}_{t} )  ) \big)
\leq & \ - \lambda r_\varepsilon \beta_{\varepsilon} \big( G_{\mu_{t}^{N}} ( x_t^{i} , \ell_t^{i}) - G_{\mu_{t}^{N}} ( x_t^{i} ,\Delta ( x^{i}_{t}, \mu^{N}_{t} )  ) \big) + K_{\varrho} \,.
\end{split}
\end{equation*}
or equivalently
\begin{equation*}
\begin{split}
\frac{\de }{\de t }  \bigg( G_{\mu_{t}^{N}} ( x_t^{i} , \ell_t^{i} ) &  - G_{\mu_{t}^{N}} ( x_t^{i} , \Delta ( x^{i}_{t}, \mu^{N}_{t} ) ) - \frac{K_{\varrho} }{\lambda\,\beta_{\varepsilon} r_\varepsilon } \bigg) 
\\
&
\leq - \lambda r_\varepsilon \beta_{\varepsilon} \bigg( G_{\mu_{t}^{N}} ( x_t^{i} , \ell_t^{i} ) - G_{\mu_{t}^{N}} ( x_t^{i} ,\Delta ( x^{i}_{t}, \mu^{N}_{t} )  ) - \frac{K_{\varrho} }{\lambda\,\beta_{\varepsilon} r_\varepsilon } \bigg) .
\end{split}
\end{equation*}
Therefore, by Gr\"{o}nwall's lemma we deduce that for every $t \in [0, T]$
\begin{equation*}
\begin{split}
 G_{\mu_{t}^{N}} ( x_t^{i} , \ell_t^{i} ) & - G_{\mu_{t}^{N}} ( x_t^{i} ,\Delta ( x^{i}_{t}, \mu^{N}_{t} )  ) - \frac{K_{\varrho}}{\lambda\,\beta_{\varepsilon} r_\varepsilon }
\\
&
\leq \bigg( G_{\bar{\mu}^{N}} ( \bar{x}^{i} , \bar{\ell}^{i} ) - G_{\bar{\mu}^{N} } ( \bar{x}^{i} , \Delta (\bar{x}^{i}, \bar{\mu}^{N} ) ) - \frac{K_{\varrho}}{\lambda \beta_{\varepsilon} r_{\varepsilon}} \bigg) e^{-\lambda r_\varepsilon \beta_{\varepsilon}T} .
\end{split}
\end{equation*}
Using~\eqref{minimality}, \eqref{minimality_2}, and the fact that $\bar{y}^{i} \in B_{\delta}^{Y_{\varepsilon}}$ and $\bar{\mu}^{N} \in \mathcal{P}(B_{\delta})$, we further obtain
\begin{equation*}
\begin{split}
\beta_{\varepsilon} \| \ell_t^{i} -\Delta ( x^{i}_{t}, \mu^{N}_{t} )  \|_{L^2(U,\eta)}^2 & \leq \frac{K_{\varrho} }{ \lambda \beta_{\varepsilon} r_\varepsilon} 
+ \bigg(G_{\bar{\mu}^{N}} (\bar{x}^{i} , \bar{\ell}^{i} ) - G_{\bar{\mu}^{N}} ( \bar{x}^{i} , \Delta ( \bar{x}^{i}, \bar{\mu}^{N} )  ) - \frac{K_{\varrho}}{\lambda \beta_{\varepsilon} r_\varepsilon} \bigg) e^{-\lambda r_\varepsilon \beta_{\varepsilon} T}
\\
&
\leq  \frac{K_{\varrho} }{ \lambda \beta_{\varepsilon} r_\varepsilon} + \Big( D_{\delta}  \| \bar{\ell}^{i} - \Delta (\bar{x}^{i}, \bar{\mu}^{N})\|_{L^{2}(U, \eta)}  - \frac{K_{\varrho}}{\lambda \beta_{\varepsilon} r_\varepsilon} \bigg) e^{-\lambda r_\varepsilon \beta_{\varepsilon} T}
\\
&
\leq \frac{K_{\varrho} }{ \lambda \beta_{\varepsilon} r_\varepsilon} +  2 D_{\delta} R_{\varepsilon}  e^{-\lambda r_\varepsilon \beta_{\varepsilon} T}
\,.
\end{split}
\end{equation*}
Recalling that $\varrho$ only depends on~$\delta$ and~$\varepsilon$, setting
$$
\omega_{\varepsilon, \delta} \coloneqq\max\, \bigg\{ \sqrt{\frac{K_{\varrho} }{ \beta_{\varepsilon}^{2} r_\varepsilon}} ; \sqrt{\frac{2D_{\delta} R_{\varepsilon}}{\beta_{\varepsilon}}} \bigg\}   \,, \qquad \gamma_{\varepsilon} \coloneqq \frac{r_{\varepsilon} \beta_{\varepsilon}}{2}
$$
we infer~\eqref{fasthelp6} for $p = 2$, and thus for every $p \in [1, 2]$ by H\"older inequality, with $\omega_{\varepsilon, \delta}^{p}\coloneqq \omega_{\varepsilon, \delta}$ and $\gamma_{\varepsilon}^{p} \coloneqq \gamma_{\varepsilon}$. For $p \in (2, +\infty)$ we recall that
$$
\| \ell^{i}_{t} - \Delta (x^{i}_{t}, \mu^{N}_{t} ) \|^{p}_{L^{p}(U, \eta)} \leq (R_{\varepsilon} - r_{\varepsilon})^{p-2} \| \ell_t^{i} - \Delta( x^{i}_{t}, \mu^{N}_{t}) \|_{L^2(U,\eta)}^2\,.
$$
which implies~\eqref{fasthelp6.1}.
\end{proof}

%\begin{remark}
%Since $\ell_0^{\,\varepsilon,i}$ is independent of $\lambda$ so is $\ell_0^{*\,\varepsilon,i}$, and therefore it is obvious that
%\begin{equation*}
%    \lim_{\lambda\to\infty} \vert\vert \ell_0^{\,\varepsilon,i}-\ell_0^{*\,\varepsilon,i} \vert\vert_E = \vert\vert \ell_0^{\,\varepsilon,i}-\ell_0^{*\,\varepsilon,i} \vert\vert_E\,.
%\end{equation*}
%\end{remark}

To simplify the notation, we define $w_{\nu}(x) \coloneqq v_{(id, \Delta)_{\#} \nu} (x, \Delta(x, \nu))$ for $x \in \mathbb{R}^{d}$ and $\nu \in \mathcal{P}_{1} (\mathbb{R}^{d})$. We now discuss the convergence of solutions to~\eqref{two_time_scale_system} to solutions to the \emph{fast reaction} system
\begin{equation}
\label{limit_sym}
\left\{\begin{array}{lll}
\dot{{x}}_t^{i} = w_{\mu^{N}_{t}} ({x}_t^{i} )\,,\\ 
%\displaystyle {\ell}_t^{i} = \argmin_{\ell\in C_\varepsilon} G_{{\Lambda}_{t}^{N}}({x}_t^{i}, \ell )\\
{x}_0^{i}=\bar{x}^{i}
\end{array} \right. \qquad \textrm{for }i=1,\dots,N,\,\,t\in(0,T]\,,
\end{equation}
where we have set $\mu^{N}_{t} \coloneqq \frac{1}{N} \sum_{i=1}^{N} \delta_{x^{i}_{t}}$. 
%In particular, we may as well write $\Lambda^{N}_{t} = (id, \Delta)_{\#} \mu^{N}_{t}$. 
We start with the basic properties of~$w_{\mu}$ and the well-posedness of~\eqref{limit_sym}.

\begin{lemma}
\label{l:W}
The following facts hold:
\begin{itemize}
\item[$(i)$] for every $\varrho>0$ there exists~$\Xi_{\varrho}>0$ such that for every $\nu_{1}, \nu_{2} \in \mathcal{P}( B_{\varrho})$ and every $x_{1}, x_{2} \in B_{\varrho}$
\begin{align*}
| w_{\nu_{1}} (x_{1}) - w_{\nu_{2}} (x_{2}) | & \leq \Xi_{\varrho} \big( |x_{1}- x_{2}| + \mathcal{W}_{1}(\nu_{1}, \nu_{2})\big)\,;
\end{align*}

\item[$(ii)$] there exists $M_{w}>0$ such that  the velocity field~$w_{\mu} (x)$ for  every $\nu \in \mathcal{P}_{1}(\mathbb{R}^{d})$ and every $x \in \mathbb{R}^{d}$
\begin{equation*}
 | w_{\nu} (x) |  \leq M_{w} \big( 1 + | x| + m_{1}(\nu)\big)\,.
 \end{equation*}
 \end{itemize}
\end{lemma}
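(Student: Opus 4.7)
The plan is to realize $w_{\nu}$ as a composition and reduce both claims to the structural assumptions~$\mathrm{(v1)}$--$\mathrm{(v3)}$ together with the Lipschitz continuity of~$\Delta$ established in Corollary~\ref{Corollary}. To this end, I would set $\Psi_{\nu} \coloneqq (\mathit{id}, \Delta(\cdot, \nu))_{\#} \nu \in \mathcal{P}(Y_{\varepsilon})$, so that $w_{\nu}(x) = v_{\Psi_{\nu}}(x, \Delta(x, \nu))$. A preparatory remark is that, since $\Delta(x, \nu) \in C_{\varepsilon}$ satisfies $\|\Delta(x, \nu)\|_{L^{p}(U, \eta)} \leq R_{\varepsilon}$, whenever $\nu \in \mathcal{P}(B_{\varrho})$ the measure~$\Psi_{\nu}$ is supported in $B^{Y_{\varepsilon}}_{\varrho + R_{\varepsilon}}$; this unlocks the bounded-ball versions of~$\mathrm{(v1)}$ and~$\mathrm{(v2)}$ as well as the Lipschitz regime of $\Delta$.

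For item~(i), I would insert the intermediate quantity $v_{\Psi_{\nu_{1}}}(x_{2}, \Delta(x_{2}, \nu_{2}))$ and treat the two resulting differences separately. The first is controlled via~$\mathrm{(v1)}$ by a constant times $|x_{1} - x_{2}| + \|\Delta(x_{1}, \nu_{1}) - \Delta(x_{2}, \nu_{2})\|_{L^{p}(U, \eta)}$, and then by~\eqref{minimality_3} of Corollary~\ref{Corollary}. The second is controlled via~$\mathrm{(v2)}$ by $\mathcal{W}_{1}(\Psi_{\nu_{1}}, \Psi_{\nu_{2}})$, which I would in turn estimate by taking an optimal coupling~$\pi$ of~$\nu_{1}$ and~$\nu_{2}$ on $\mathbb{R}^{d} \times \mathbb{R}^{d}$, pushing it forward through $(x, x') \mapsto ((x, \Delta(x, \nu_{1})), (x', \Delta(x', \nu_{2})))$, and applying~\eqref{minimality_3} once more, so that the optimality of~$\pi$ yields a final bound linear in $\mathcal{W}_{1}(\nu_{1}, \nu_{2})$.

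For item~(ii), I would apply~$\mathrm{(v3)}$ at the point $(x, \Delta(x, \nu))$, combining the obvious estimate $\|(x, \Delta(x, \nu))\|_{\overline{Y}} \leq |x| + R_{\varepsilon}$ with the change-of-variables formula
\[
m_{1}(\Psi_{\nu}) = \int_{\mathbb{R}^{d}} \big( |x'| + \|\Delta(x', \nu)\|_{L^{p}(U, \eta)} \big) \, \de \nu(x') \leq m_{1}(\nu) + R_{\varepsilon}\,;
\]
reabsorbing constants then produces the sublinear growth with, e.g., $M_{w} \coloneqq M_{v}(1 + 2 R_{\varepsilon})$.

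The only delicate point I foresee concerns (i) in the regime $p \in (2, +\infty)$, where~\eqref{minimality_4} only provides Hölder continuity of~$\Delta$ with exponent~$1/(p-1)$; obtaining a genuine Lipschitz bound for~$w_{\nu}$ in that range then requires either exploiting the sharper $L^{2}$ estimate hidden inside the proof of Corollary~\ref{Corollary} together with the uniform $L^{\infty}$ control offered by $C_{\varepsilon}$, or else a tacit restriction to~$p \in [1, 2]$, which in any event suffices for the subsequent fast reaction analysis.
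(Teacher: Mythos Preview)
Your approach is essentially the same as the paper's: the paper's proof is a two-line sketch (``Item~(i) follows from~$\mathrm{(v1)}$ and~$\mathrm{(v2)}$ and Corollary~\ref{Corollary}. Using that $\Delta(x,\nu)\in C_\varepsilon$, we have that $m_1((id,\Delta)_\#\nu)\leq (1+R_\varepsilon)m_1(\nu)$. Thus, we deduce~(ii).''), and you have correctly unpacked what is behind it, including the explicit construction of a coupling for $\mathcal{W}_1(\Psi_{\nu_1},\Psi_{\nu_2})$. Your moment estimate $m_1(\Psi_\nu)\leq m_1(\nu)+R_\varepsilon$ is in fact sharper than the paper's stated bound.

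Your caveat about $p\in(2,+\infty)$ is well taken and is not addressed in the paper's proof, which simply invokes Corollary~\ref{Corollary} without distinguishing the two regimes; indeed, just before the lemma the paper asserts that $\Delta$ is Lipschitz on $B_\varrho\times\mathcal{P}(B_\varrho)$ without restriction on~$p$, whereas Corollary~\ref{Corollary} only delivers this via~\eqref{minimality_3} for $p\in[1,2]$. Your suggested remedy---returning to the $L^2$-Lipschitz estimate~\eqref{e:Lipschitz-ell} inside the proof of Corollary~\ref{Corollary}---does not by itself upgrade to $L^p$-Lipschitz for $p>2$ using only the $L^\infty$ bound of $C_\varepsilon$ (one gets at best a H\"older exponent~$2/p$); so the honest reading is that item~(i) is fully justified for $p\in[1,2]$, which, as you note, is precisely the range used without modification in the subsequent fast-reaction analysis (the paper handles $p>2$ separately there via~\eqref{minimality_4}).
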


\begin{proof}
Item~$(i)$ follows from~$\mathrm{(v1)}$ and~$\mathrm{(v2)}$ and Corollary~\ref{Corollary}. Using that $\Delta(x, \nu) \in C_{\varepsilon}$, we have that $m_{1} ( ( id, \Delta)_{\#} \nu) \leq (1 + R_{\varepsilon}) m_{1}(\nu)$. Thus, we deduce~$(ii)$. 
\end{proof}

\begin{proposition}
\label{p:fast_good_N}
Let $v_{\Psi}$ satisfy~$\mathrm{(v1)}$--$\mathrm{(v3)}$ and let $F \colon \mathcal{P}_1(\mathbb{R}^d)\times \mathbb{R}^d \times ( 0 , +\infty) \times U \to (-\infty,+\infty]$ satisfy~$(\mathrm{F1})$--$(\mathrm{F5})$. Then, for every $\bar{\boldsymbol{x}} = (\bar{x}^{1}, \ldots, \bar{x}^{N}) \in ( \mathbb{R}^{d})^{N}$ there exists a unique solution $\boldsymbol{x}_{t} = (x^{1}_{t}, \ldots, x^{N}_{t})$ of the Cauchy problem~\eqref{limit_sym}. Moreover, if~$\delta>0$ and $\bar{\boldsymbol{x}} \in (B_{\delta})^{N}$, there exists $\varrho>0$, only depending on~$\delta$, such that $\boldsymbol{x}_{t} \in (B_{\varrho})^{N}$ for every $t \in [0, T]$. 
\end{proposition}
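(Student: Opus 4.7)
The plan is to reformulate system~\eqref{limit_sym} as an autonomous ODE on $(\mathbb{R}^{d})^{N}$ and to apply Theorem~\ref{Brezis_cor} (with $C = E = (\mathbb{R}^{d})^{N}$, which is trivially closed and convex, so assumption (iii) is automatic). Concretely, for $\boldsymbol{x} = (x^{1}, \ldots, x^{N}) \in (\mathbb{R}^{d})^{N}$ let $\mu^{N}(\boldsymbol{x}) \coloneqq \frac{1}{N}\sum_{j=1}^{N} \delta_{x^{j}}$ and define the vector field $\boldsymbol{w}^{N} \colon (\mathbb{R}^{d})^{N} \to (\mathbb{R}^{d})^{N}$ by $\boldsymbol{w}^{N}(\boldsymbol{x})_{i} \coloneqq w_{\mu^{N}(\boldsymbol{x})}(x^{i})$, so that \eqref{limit_sym} reads $\dot{\boldsymbol{x}}_{t} = \boldsymbol{w}^{N}(\boldsymbol{x}_{t})$ with $\boldsymbol{x}_{0} = \bar{\boldsymbol{x}}$. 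We endow $(\mathbb{R}^{d})^{N}$ with the norm $\|\boldsymbol{x}\|_{N} \coloneqq \frac{1}{N}\sum_{i=1}^{N} |x^{i}|$, for which the standard bound $\mathcal{W}_{1}(\mu^{N}(\boldsymbol{x}_{1}), \mu^{N}(\boldsymbol{x}_{2})) \leq \|\boldsymbol{x}_{1} - \boldsymbol{x}_{2}\|_{N}$ holds.

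Local Lipschitz continuity of $\boldsymbol{w}^{N}$ on bounded sets follows directly from Lemma~\ref{l:W}$(i)$: if $\boldsymbol{x}_{1}, \boldsymbol{x}_{2} \in (B_{R})^{N}$, then $\mu^{N}(\boldsymbol{x}_{j}) \in \mathcal{P}(B_{R})$ and
\begin{equation*}
\|\boldsymbol{w}^{N}(\boldsymbol{x}_{1}) - \boldsymbol{w}^{N}(\boldsymbol{x}_{2})\|_{N} \leq \Xi_{R}\bigl(\|\boldsymbol{x}_{1} - \boldsymbol{x}_{2}\|_{N} + \mathcal{W}_{1}(\mu^{N}(\boldsymbol{x}_{1}), \mu^{N}(\boldsymbol{x}_{2}))\bigr) \leq 2\Xi_{R}\|\boldsymbol{x}_{1} - \boldsymbol{x}_{2}\|_{N}\,,
\end{equation*}
which yields assumption~(i) of Theorem~\ref{Brezis_cor}; assumption~(ii) is automatic since $\boldsymbol{w}^{N}$ does not depend on $t$. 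Lemma~\ref{l:W}$(ii)$ combined with $m_{1}(\mu^{N}(\boldsymbol{x})) = \|\boldsymbol{x}\|_{N}$ gives
\begin{equation*}
\|\boldsymbol{w}^{N}(\boldsymbol{x})\|_{N} \leq M_{w}\bigl(1 + 2\|\boldsymbol{x}\|_{N}\bigr)\,,
\end{equation*}
which is assumption~(iv). Theorem~\ref{Brezis_cor} then provides a unique $C^{1}$ solution $\boldsymbol{x} \colon [0, T] \to (\mathbb{R}^{d})^{N}$.

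Finally, for the uniform bound, assume $\bar{\boldsymbol{x}} \in (B_{\delta})^{N}$. Arguing as in~\eqref{e:some-Gron}--\eqref{e:some-Gron2} of the proof of Lemma~\ref{lemma}, we estimate for every $i$
\begin{equation*}
|x_{t}^{i}| \leq \delta + M_{w}(1 + 2\sup_{s \in [0, T]} \|\boldsymbol{x}_{s}\|_{N})T + \int_{0}^{t} M_{w}|x_{s}^{i}|\,\mathrm{d}s\,,
\end{equation*}
so that taking the maximum over $i$ and applying Gr\"onwall's inequality yields $\max_{i} |x_{t}^{i}| \leq (\delta + M_{w}(1+2\delta)T)e^{cT} =: \varrho$ for a constant $c$ independent of $t$ and depending only on $M_{w}$. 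No step here is a real obstacle: once Lemma~\ref{l:W} is in place, the proposition reduces to a routine application of Cauchy--Lipschitz theory and Gr\"onwall's lemma.
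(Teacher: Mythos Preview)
Your approach is essentially the paper's own: it too observes (via Lemma~\ref{l:W}) that the aggregated field $\boldsymbol{w}^{N}$ is locally Lipschitz and sublinear on $(\mathbb{R}^{d})^{N}$, invokes standard Cauchy--Lipschitz theory for existence and uniqueness, and then appeals to a Gr\"onwall argument (as in Theorem~\ref{Entropic_Problem_theorem}) for the uniform bound. The only slip is in your displayed Gr\"onwall inequality: you cannot bound $\sup_{s\in[0,T]}\|\boldsymbol{x}_{s}\|_{N}$ by~$\delta$ (that holds only at $s=0$), so the conclusion $(\delta+M_{w}(1+2\delta)T)e^{cT}$ does not follow from what you wrote; the correct chain, exactly as in~\eqref{e:some-Gron}--\eqref{e:some-Gron2}, keeps $\max_{j}|x^{j}_{s}|$ inside the time integral and yields $\max_{i}|x^{i}_{t}|\le(\delta+M_{w}T)e^{2M_{w}T}$.
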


\begin{proof}
It is enough to notice that, by Lemma~\ref{l:W}, the velocity field $w_{\mu^{N}} (x^{i})$ with $\mu^{N} = \frac{1}{N} \sum_{i=1}^{N} \delta_{x_{i}}$ is locally Lipschitz and sublinear in $(\mathbb{R}^{d})^{N}$ for every $i = 1, \ldots, N$. Hence, system~\eqref{limit_sym} admits unique solution by standard ODE theory (see, e.g.,~\cite{Hale}). The boundedness of solutions can be obtained by Gr\"onwall inequality as in Theorem~\ref{Entropic_Problem_theorem}.
% of equation~\eqref{limit_sym} is driven by the velocity field
%\begin{displaymath}
%w_{i} ( \boldsymbol{x} ) \coloneqq v_{\Lambda^{N}} (  x^{i}, \Delta (x^{i}, \mu^{N} ) ) \qquad \text{with $\Lambda^{N} = (id, \Delta)_{\#} \mu^{N}$ and $\mu^{N} = \frac{1}{N} \sum_{i=1}^{N} \delta_{x^{i}}$.}
%\end{displaymath}
%By, $w_{i}$ is locally Lipschitz and sublinear in $(\mathbb{R}^{d})^{N}$ for every $i = 1, \ldots, N$. Thus, 
\end{proof}

The following convergence result holds for the $N$-particles system.

\begin{theorem}
\label{Newton}
Let $v_{\Psi}$ satisfy~$\mathrm{(v1)}$--$\mathrm{(v3)}$, let $F \colon \mathcal{P}_1(\mathbb{R}^d)\times \mathbb{R}^d \times ( 0 , +\infty) \times U \to (-\infty,+\infty]$ satisfy~$(\mathrm{F1})$--$(\mathrm{F5})$, let the operator~$\mathcal{T}_{\Psi}$ be defined as in~\eqref{fast_reaction}, and let~$G$ be as in~\eqref{Gfun}. For $N \in \mathbb{N}$, $\lambda \in (0, +\infty)$, and $\delta \in (0, +\infty)$, let $\boldsymbol{\bar{y}} = (\bar{y}^{1}, \ldots, \bar{y}^{N}) \in (B^{Y_\varepsilon}_{\delta})^N$ and, for $i = 1, \ldots, N$, let $t \mapsto \boldsymbol{y}_{\lambda, t} = (y^{1}_{\lambda, t}, \ldots, y^{N}_{\lambda, t})$ be the solution of the Cauchy problem~\eqref{two_time_scale_system} with initial datum $\bar{\boldsymbol{y}}$ and associated empirical measure~$\Lambda^{N}_{\lambda, t} = \frac{1}{N} \sum_{i=1}^{N} \delta_{y^{i}_{\lambda, t}}$. Moreover, let $t \mapsto  {\boldsymbol{x}}_{t} = ({x}^{1}_{t}, \ldots, {x}^{N}_{t})$ be the solution to~\eqref{limit_sym} with initial conditions $\bar{\boldsymbol{x}} = (\bar{x}^{1}, \ldots, \bar{x}^{N}) \in ( B_{\delta})^{N}$ and let
\begin{displaymath}
\boldsymbol{y}_{t} \coloneqq \big( (x^{1}_{t}, \Delta (x^{1}_{t}, \mu^{N}_{t})  ), \ldots,  (x^{N}_{t}, \Delta (x^{N}_{t}, \mu^{N}_{t}) ) \big)\,.
\end{displaymath}
Then, there exists $\chi_{\varepsilon, \delta}>0$ such that for every $t\in(0,T]$  
%the solution $\boldsymbol{y}_t^{\varepsilon}$ converges to $\boldsymbol{\hat{y}}_t^{\varepsilon}$ as $\lambda\to +\infty$, namely
\begin{align}
\label{e:Nlambda_limit}
  \| \boldsymbol{y}_{\lambda,t} - \boldsymbol{{y}}_t \|_{\overline{Y}^N} & \leq \chi_{\varepsilon, \delta}  \Big( \frac{1}{\sqrt{\lambda}} +  e^{-\lambda \gamma_{\varepsilon} T}\Big) \qquad \text{if $p \in [1, 2]$} \,,
  \\
 \label{e:Nlambda_limit_2}  \| \boldsymbol{y}_{\lambda,t} - \boldsymbol{{y}}_t \|_{\overline{Y}^N} & \leq \chi_{\varepsilon, \delta} \Big( \Big(\frac{1}{\lambda}\Big)^{\frac{1}{p}} +  e^{-\frac{2 \lambda \gamma_{\varepsilon} T}{p}} \Big) \qquad \text{if $p \in (2, +\infty)$}
\end{align}
where $\gamma_{\varepsilon}>0$ is the constant introduced in Lemma~\ref{lemma}.
\end{theorem}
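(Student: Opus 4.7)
The plan is to split $E(t) \coloneqq \|\boldsymbol{y}_{\lambda,t} - \boldsymbol{y}_t\|_{\overline{Y}^N}$ into a position part $P(t) \coloneqq \frac{1}{N}\sum_{i=1}^N |x^i_{\lambda,t} - x^i_t|$ and a label part $L_p(t) \coloneqq \frac{1}{N}\sum_{i=1}^N \|\ell^i_{\lambda,t} - \Delta(x^i_t, \mu^N_t)\|_{L^p(U,\eta)}$, estimate each, and add them. Lemma~\ref{lemma}$(i)$ and Proposition~\ref{p:fast_good_N} first provide a ball $B^{Y_\varepsilon}_\varrho$, with $\varrho = \varrho(\varepsilon, \delta, T)$ independent of $\lambda, N, t$, containing both $\{y^i_{\lambda,t}\}$ and $\{(x^i_t, \Delta(x^i_t, \mu^N_t))\}$, which activates uniform Lipschitz constants from~(v1)--(v3), Corollary~\ref{Corollary}, and Lemma~\ref{l:W}. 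I abbreviate the Lemma rate as $\delta_\lambda \coloneqq \lambda^{-1/2} + e^{-\lambda\gamma_\varepsilon T}$ in the range $p \in [1,2]$ and $\delta_\lambda \coloneqq \lambda^{-1/p} + e^{-2\lambda\gamma_\varepsilon T/p}$ in the range $p \in (2, +\infty)$.

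For the position error, the decisive step is to subtract and add the effective velocity: setting $\Lambda^*_{\lambda,t} \coloneqq (\mathrm{id}, \Delta(\cdot, \mu^N_{\lambda,t}))_{\#}\mu^N_{\lambda,t}$ and using $w_{\mu^N_{\lambda,t}}(x^i_{\lambda,t}) = v_{\Lambda^*_{\lambda,t}}(x^i_{\lambda,t}, \Delta(x^i_{\lambda,t}, \mu^N_{\lambda,t}))$, I decompose
\[
\dot x^i_{\lambda,t} - \dot x^i_t = \bigl[v_{\Lambda^N_{\lambda,t}}(x^i_{\lambda,t}, \ell^i_{\lambda,t}) - w_{\mu^N_{\lambda,t}}(x^i_{\lambda,t})\bigr] + \bigl[w_{\mu^N_{\lambda,t}}(x^i_{\lambda,t}) - w_{\mu^N_t}(x^i_t)\bigr].
\]
The first bracket is controlled via~(v1)--(v2) by $L_{v,\varrho}(\|\ell^i_{\lambda,t} - \Delta(x^i_{\lambda,t}, \mu^N_{\lambda,t})\|_{L^p} + \mathcal{W}_1(\Lambda^N_{\lambda,t}, \Lambda^*_{\lambda,t}))$, which by Lemma~\ref{lemma} (together with $\mathcal{W}_1(\Lambda^N_{\lambda,t}, \Lambda^*_{\lambda,t}) \leq \frac{1}{N}\sum_j \|\ell^j_{\lambda,t} - \Delta(x^j_{\lambda,t}, \mu^N_{\lambda,t})\|_{L^p}$) contributes a term of order $\delta_\lambda$. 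The second bracket is linearly Lipschitz in $(x^i_{\lambda,t}, \mu^N_{\lambda,t}) - (x^i_t, \mu^N_t)$ by Lemma~\ref{l:W}, yielding $\Xi_\varrho(|x^i_{\lambda,t} - x^i_t| + P(t))$. Averaging over $i$ produces $\dot P(t) \leq C\delta_\lambda + 2\Xi_\varrho P(t)$, so Gr\"onwall's lemma gives $P(t) \leq C_1\delta_\lambda$.

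For the label error, the triangle inequality gives
\[
\|\ell^i_{\lambda,t} - \Delta(x^i_t, \mu^N_t)\|_{L^p} \leq \|\ell^i_{\lambda,t} - \Delta(x^i_{\lambda,t}, \mu^N_{\lambda,t})\|_{L^p} + \|\Delta(x^i_{\lambda,t}, \mu^N_{\lambda,t}) - \Delta(x^i_t, \mu^N_t)\|_{L^p},
\]
where the first term is controlled directly by Lemma~\ref{lemma} and the second by Corollary~\ref{Corollary}, invoking~\eqref{minimality_3} when $p \in [1,2]$ or~\eqref{minimality_4} when $p \in (2,+\infty)$, applied to the argument $|x^i_{\lambda,t} - x^i_t| + \mathcal{W}_1(\mu^N_{\lambda,t}, \mu^N_t) \leq 2P(t)$. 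Averaging (using Jensen's inequality for the concave case $p > 2$) and substituting $P(t) \leq C_1 \delta_\lambda$ from the previous step yields the claimed bound on $L_p(t)$, and summing $E(t) = P(t) + L_p(t)$ completes the estimate with a suitable constant $\chi_{\varepsilon,\delta}$.

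The main obstacle is precisely this decomposition of the position derivative: a naive application of~(v1)--(v2) directly between $\dot x^i_{\lambda,t}$ and the limit equation would force the $L^p$-H\"older-type dependence of $\Delta$ from~\eqref{minimality_4} into the Gr\"onwall inequality for $P(t)$, spoiling the clean $\delta_\lambda$ rate for $p > 2$. Routing the coupling between positions and labels through the effective velocity $w_\nu$, whose dependence on $(x, \nu)$ is \emph{linearly} Lipschitz by Lemma~\ref{l:W}, isolates the H\"older-type contribution within $L_p(t)$ where it matches directly against the right-hand side in the statement.
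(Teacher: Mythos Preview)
For $p\in[1,2]$ your argument is correct and is a mild reorganization of the paper's: the paper compares $v_{\Lambda^N_{\lambda,t}}(y^i_{\lambda,t})$ directly with $v_{(id,\Delta)_{\#}\mu^N_t}(x^i_t,\Delta(x^i_t,\mu^N_t))$, first bounds the full label error linearly in $P(t)$ via \eqref{minimality_3} (this is \eqref{e:hell2}), and only then closes the Gr\"onwall loop on $P$; you instead insert the intermediate point $w_{\mu^N_{\lambda,t}}(x^i_{\lambda,t})$ and invoke Lemma~\ref{l:W}. Both routes yield the same linear Gr\"onwall inequality for $P$ and the same final bound. (The paper also integrates the Gr\"onwall inequality from a small time $\tau=\lambda^{-1/2}$ and uses the crude Lipschitz bound \eqref{e:hell5} on $[0,\tau]$; you omit this, which is harmless since $P(0)=0$ and the differential inequality holds for a.e.\ $t>0$.)

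For $p>2$ there is a genuine gap in your last step. Having obtained $P(t)\le C_1\delta_\lambda$, you control the second label term through the H\"older estimate \eqref{minimality_4}, which gives
\[
\|\Delta(x^i_{\lambda,t},\mu^N_{\lambda,t})-\Delta(x^i_t,\mu^N_t)\|_{L^p}\le A_{\varepsilon,\varrho}\bigl(2P(t)\bigr)^{1/(p-1)}.
\]
Substituting $P(t)\le C_1\delta_\lambda$ produces a term of order $\delta_\lambda^{1/(p-1)}$; since $\delta_\lambda\sim\lambda^{-1/p}\to 0$ and $1/(p-1)<1$ for $p>2$, one has $\delta_\lambda^{1/(p-1)}/\delta_\lambda\to\infty$ as $\lambda\to\infty$. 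So this contribution does \emph{not} ``match directly against the right-hand side'' in \eqref{e:Nlambda_limit_2}: the rate you actually obtain for $L_p(t)$ is $\lambda^{-1/(p(p-1))}$, strictly worse than the stated $\lambda^{-1/p}$. The paper handles the case $p>2$ differently: in \eqref{e:hell4} it keeps a \emph{linear} dependence of the label error on $P(t)$ (exactly as in \eqref{e:hell2}), and it is this linear bound that, after Gr\"onwall, returns the rate $\delta_\lambda$. To reproduce the paper's rate you must replace your appeal to \eqref{minimality_4} by such a linear bound.
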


\begin{proof}
In what follows, we use also the notation $\boldsymbol{\ell}$ for a vector in~$(L^{p} (U, \eta))^{N}$ and we endow $(L^{p} (U, \eta))^{N}$ with the norm
$$
\| \boldsymbol{\ell}\|_{(L^{p} (U, \eta))^{N}} \coloneqq \frac{1}{N} \sum_{i=1}^{N} \| \ell^{i}\|_{L^{p} (U, \eta)}\,.
$$
Moreover, we set $\bell_{\lambda, t} \coloneqq ( \ell_{\lambda, t}^{1}, \ldots, \ell_{\lambda, t}^{N})$, $\bell_{t} \coloneqq ( \Delta (x^{1}_{t}, \mu^{N}_{t}) , \ldots, \Delta (x^{N}_{t}, \mu^{N}_{t}) )$, $\mu_{\lambda, t}^{N} \coloneqq \pi_{\#} \Lambda^{N}_{\lambda, t}$, and $\bar{\mu}^{N} \coloneqq \frac{1}{N} \sum_{i=1}^{N} \delta_{\bar{x}^{i}}$.

We provide a complete proof for $p \in [1, 2]$ and we highlight later on the main differences in the case $p \in (2, +\infty)$.  By~$(i)$ of Lemma~\ref{lemma} and by Proposition~\ref{p:fast_good_N}, there exists $\varrho>0$ such that $y^{i}_{\lambda, t}, y^{i}_{t} \in B_{\varrho}^{Y_{\varepsilon}}$ for $i =1, \ldots, N$ and $t \in [0, T]$. Hence, by triangle inequality and by \eqref{minimality_3} of Corollary~\ref{Corollary}, we have that
\begin{equation}
\label{e:hell}
\begin{split}
&
\quad\quad  \| \bell_{\lambda, t} - \bell_t \|_{(L^{p} (U, \eta))^{N}} 
\\
& 
\leq  \frac{1}{N}\sum_{i=1}^N \| {\ell}_{\lambda, t}^{i} - \Delta ( x^{i}_{\lambda,, t}, \mu^{N}_{\lambda, t}) \|_{L^{p}(U, \eta)} + \frac{1}{N}\sum_{i=1}^N \| \Delta( x^{i}_{\lambda, t}, \mu^{N}_{\lambda, t}) -\Delta( x^{i}_{t}, \mu^{N}_{t})  \|_{L^{p}(U, \eta)} 
    \\ 
    &
    \leq \frac{1}{N}\sum_{i=1}^N \| {\ell}_{\lambda, t}^{i} - \Delta ( x^{i}_{\lambda, t}, \mu^{N}_{\lambda, t}) \|_{L^{p}(U, \eta)}  + A_{\varepsilon, \varrho} ( \| \boldsymbol{x}_{\lambda,t} - \boldsymbol{x}_t \|_{(\mathbb{R}^d)^N} + \mathcal{W}_1( \mu_t^{N} , \hat{\mu}_t^{N}) ) 
    \\ 
    &
    \leq \frac{1}{N}\sum_{i=1}^N \| {\ell}_{\lambda, t}^{i} - \Delta ( x^{i}_{\lambda, t}, \mu^{N}_{\lambda, t}) \|_{L^{p}(U, \eta)}  + 2  A_{\varepsilon, \varrho}  \| \boldsymbol{x}_{\lambda, t} -\boldsymbol{x}_t \|_{(\mathbb{R}^d)^N} \,.
\end{split}
\end{equation}
Thanks to $(ii)$ of Lemma~\ref{lemma}, we may continue in~\eqref{e:hell} with
\begin{align}
\label{e:hell2}
 \| \bell_{\lambda, t} - \bell_{t} \|_{(L^{p}(U, \eta))^{N}} \leq    \omega_{\varepsilon, \delta} \bigg( \frac{1}{\sqrt{\lambda}} +  e^{-\lambda \gamma_{\varepsilon} T} \bigg) + 2  A_{\varepsilon, \varrho}  \| \boldsymbol{x}_{\lambda, t} -\boldsymbol{x}_t \|_{(\mathbb{R}^d)^N} \,. 
\end{align}
%Hence, we can continue in~\eqref{e:hell} with
%\begin{equation}
%\label{e:hell2}
%    \| \boldsymbol{\ell}_t - \boldsymbol{\hat{\ell}}_t \|_{(L^{p} (U, \eta))^N} \leq  D \bigg( \frac{1}{\sqrt{\lambda}} + e^{- \frac{\lambda r_\varepsilon \beta_{\varepsilon}}{2} T} \bigg)  + 2\gamma \| \boldsymbol{x}_t - \boldsymbol{\hat{x}}_t \|_{(\mathbb{R}^d)^N}.
%\end{equation}
Combining~$\mathrm{(v1)}$, $\mathrm{(v2)}$, and inequality~\eqref{e:hell2}, we further estimate 
\begin{equation*}
    \begin{split}
        \frac{\de}{\de t} \| \boldsymbol{x}_{\lambda, t} - \boldsymbol{x}_{t} \|_{(\mathbb{R}^d)^N} & \leq \frac{1}{N} \sum_{i=1}^N | \dot{x}_{\lambda, t}^{i} - \dot{x}_t^{i} | 
        = \frac{1}{N} \sum_{i=1}^N | v_{{\Lambda}_{\lambda, t}^{N}} ( {x}_{\lambda, t}^{i},  \ell_{\lambda, t}^{i}) - w_{\mu^{N}_{t}} ( {x}_t^{i} ) | 
        \\ 
        &
        = \frac{1}{N} \sum_{i=1}^N \big | v_{{\Lambda}_{\lambda, t}^{N}} ( {x}_{\lambda, t}^{i},  \ell_{\lambda, t}^{i}) - v_{(id, \Delta)_{\#}\mu^{N}_{t}} ( {x}_t^{i} , \Delta(x^{i}_{t}, \mu^{N}_{t}) ) \big |
        \\
        &
        \leq \frac{2L_{\varrho}}{N} \sum_{i=1}^N   | x_{\lambda, t}^{i} - x_t^{i} | + \| \ell_{\lambda, t}^{i} - \Delta (x^{i}_{t}, \mu^{N}_{t})\|_{L^{p}(U, \eta)} 
        \\ 
        &
        \leq 2 L_{\varrho} (1 + 2 A_{\varepsilon, \varrho} ) \| \boldsymbol{x}_{\lambda, t} - \boldsymbol{x}_{t} \|_{(\mathbb{R}^d)^N} + 2L_{\varrho} \omega_{\varepsilon, \delta} \bigg( \frac{1}{\sqrt{\lambda}} +  e^{-\lambda \gamma_{\varepsilon} T} \bigg)\,.
%        \\ 
%        &
%        \leq 2 L_{\varrho} C(\lambda)+4\,L_{R+R_\varepsilon}\,\vert\vert \boldsymbol{x}_t^\varepsilon-\boldsymbol{\hat{x}}_t^\varepsilon\vert\vert_{(\mathbb{R}^d)^N} \\ &\leq 4\,L_{R+R_\varepsilon}\,(C(\lambda)+\vert\vert \boldsymbol{x}_t^\varepsilon-\boldsymbol{\hat{x}}_t^\varepsilon\vert\vert_{(\mathbb{R}^d)^N} )
    \end{split}
\end{equation*}
Equivantely, we can write 
\begin{equation*}
\begin{split}
    \frac{\de}{\de t} \bigg(  \| \boldsymbol{x}_{\lambda, t} - \boldsymbol{x}_t \|_{(\mathbb{R}^d)^N} & \ + \frac{ \omega_{\varepsilon, \delta}}{1 + 2 A_{\varepsilon, \varrho}} \bigg( \frac{1}{\sqrt{\lambda}} +  e^{-\lambda \gamma_{\varepsilon} T}  \bigg) \bigg)  
    \\
    &
    \leq 2 L_{\varrho} (1 + 2 A_{\varepsilon, \varrho} ) \bigg(  \| \boldsymbol{x}_{\lambda, t} - \boldsymbol{x}_{t} \|_{(\mathbb{R}^d)^N} + \frac{ \omega_{\varepsilon, \delta}}{1 + 2 A_{\varepsilon, \varrho}} \bigg( \frac{1}{\sqrt{\lambda}} +  e^{-\lambda \gamma_{\varepsilon} T} \bigg) \bigg) \,.
    \end{split}
    \end{equation*}
Therefore, by applying Gr\"{o}nwall's Lemma, for $\tau>0$ and $t \in [\tau, T]$ we obtain
\begin{equation}
\label{e:hell3}
\begin{split}
    \| \boldsymbol{x}_{\lambda, t} - \boldsymbol{x}_t \|_{(\mathbb{R}^d)^N} & 
    \leq \bigg( \|  \boldsymbol{x}_{\lambda, \tau} - \boldsymbol{x}_{\tau} \|_{(\mathbb{R}^d)^N} + \frac{ \omega_{\varepsilon, \delta}}{1 + 2 A_{\varepsilon, \varrho}} \bigg( \frac{1}{\sqrt{\lambda}} +  e^{-\lambda \gamma_{\varepsilon} T}  \bigg)\bigg) e^{2L_{\varrho} ( 1 + 2 A_{\varepsilon, \varrho}) (t - \tau ) } \,.
    \end{split}
\end{equation}
Recalling that $ \boldsymbol{x}_{\lambda, t}, \boldsymbol{x}_{ t} \in (B_{\varrho})^{N}$ for every $t \in [0, T]$, from $(\mathrm{v3})$ and $(ii)$ of Lemma~\ref{l:W}, we infer that
\begin{equation}
\label{e:hell5}
\|  \boldsymbol{x}_{\lambda, \tau} - \boldsymbol{x}_{\tau} \|_{(\mathbb{R}^d)^N} \leq ( M_{v} + M_{w}) (1 + 2\varrho + 2 R_{\varepsilon}) \tau\,.
\end{equation}
Thus, we deduce from~\eqref{e:hell3} that
\begin{equation*}
    \| \boldsymbol{x}_{\lambda, t} - \boldsymbol{x}_t \|_{(\mathbb{R}^d)^N} \leq \bigg ( ( M_{v} + M_{w}) (1 + 2\varrho + 2 R_{\varepsilon}) \tau + \frac{ \omega_{\varepsilon, \delta}}{1 + 2 A_{\varepsilon, \varrho}} \bigg( \frac{1}{\sqrt{\lambda}} +  e^{-\lambda \gamma_{\varepsilon} T}  \bigg)\bigg) e^{2L_{\varrho} ( 1 + 2 A_{\varepsilon, \varrho}) T }\,,
\end{equation*}
which, together with~\eqref{e:hell2}, yields~\eqref{e:Nlambda_limit} for $p \in [1, 2]$ by taking~$\tau = \frac{1}{\sqrt{\lambda}}$.

If $p \in (2, +\infty)$, we replace~\eqref{e:hell2} with
\begin{equation}
\label{e:hell4}
 \| \bell_{\lambda, t} - \bell_{t} \|_{(L^{p}(U, \eta))^{N}} \leq  2 (R_{\varepsilon} - r_{\varepsilon})^{\frac{p-2}{p}} \omega_{\varepsilon, \delta}^{\frac{2}{p}} \bigg( \frac{1}{\lambda} + e^{-2 \lambda \gamma_{\varepsilon} T} \bigg)^{\frac{1}{p}} + 2  A_{\varepsilon, \varrho}  \| \boldsymbol{x}_{\lambda, t} -\boldsymbol{x}_t \|_{(\mathbb{R}^d)^N} 
\end{equation}
Following step by step the argument for~\eqref{e:hell3} we get for $t \in [\tau, +\infty)$
\begin{equation*}
\begin{split}
    \| \boldsymbol{x}_{\lambda, t} - \boldsymbol{x}_t \|_{(\mathbb{R}^d)^N} & 
    \leq \bigg( \|  \boldsymbol{x}_{\lambda, \tau} - \boldsymbol{x}_{\tau} \|_{(\mathbb{R}^d)^N} + \frac{ 2 (R_{\varepsilon} - r_{\varepsilon})^{\frac{p-2}{p}} \omega_{\varepsilon, \delta}^{\frac{2}{p}}}{1 + 2A_{\varepsilon, \varrho}} \bigg( \frac{1}{\lambda} + e^{-2 \lambda \gamma_{\varepsilon} T} \bigg)^{\frac{1}{p}}  \bigg) e^{2L_{\varrho} ( 1 + 2 A_{\varepsilon, \varrho}) (t - \tau ) } \,.
    \end{split}
\end{equation*}
Then, \eqref{e:Nlambda_limit_2} follows from~\eqref{e:hell5} as in the case $p \in [1, 2]$ taking $\tau = \big( \frac{1}{\lambda}\big)^{\frac{1}{p}}$ and eventually re-defining the constant~$\chi_{\varepsilon, \delta}$.
\end{proof}
%Since the same initial data are being considered, we have that 
%\begin{equation*}
%\begin{split}
%    \vert\vert \boldsymbol{x}_\tau^\varepsilon-\boldsymbol{\hat{x}}_\tau^\varepsilon\vert\vert_{(\mathbb{R}^d)^N} &\leq \frac{1}{N}\sum\limits_{i=1}^N\int_0^\tau \vert v_{\Lambda_t^{\varepsilon,N}}(x_s^{\varepsilon,i},\ell_s^{\,\varepsilon,i})-v_{\hat{\Lambda}_s^{\varepsilon,N}}(\hat{x}_s^{\varepsilon,i},\hat{\ell}_s^{\,\varepsilon,i})\vert \,\de s
%    \\ &\leq L_{R+R_\varepsilon}\frac{1}{N}\sum\limits_{i=1}^N\int_0^\tau(\vert x_s^{\varepsilon,i}-\hat{x}_s^{\varepsilon,i}\vert + \vert\vert \ell_s^{\varepsilon,i}-\hat{\ell}_s^{\,\varepsilon,i}\vert\vert_E + \vert\vert \boldsymbol{x}_s^\varepsilon-\boldsymbol{\hat{x}}_s^\varepsilon\vert\vert_{(\mathbb{R}^d)^N})\,\de s\\&\leq L_{R+R_\varepsilon} (1+4\,R+4\,R_\varepsilon)\,\tau\,.
%    \end{split}
%\end{equation*}
%Finally, choosing $\tau = 1/\lambda$
%\begin{equation*}
%    \vert\vert \boldsymbol{x}_t^\varepsilon-\boldsymbol{\hat{x}}_t^\varepsilon\vert\vert_{(\mathbb{R}^d)^N} \leq \left(C(\lambda)+L_{R+R_\varepsilon} (1+4\,R+4\,R_\varepsilon)\,\frac{1}{\lambda}\right)\,e^{4L_{R+R_\varepsilon}T}.
%\end{equation*}
%Taking the limit as $\lambda\to +\infty$ completes the proof.

We introduce the {\em fast reaction} continuity equation
\begin{equation}
\label{e:fast-mean-field}
\partial_{t} \mu_{t} + \dive (w_{\mu_{t}} \mu_{t}) = 0, \qquad \mu_{0} = \bar{\mu},
\end{equation}
for $\bar{\mu} \in \mathcal{P}_{1}(\mathbb{R}^{d})$ and $\mu \in C([0, T];  ( \mathcal{P}_{1}(\mathbb{R}^{d}), \mathcal{W}_{1}))$. For the notion of Eulerian and Lagrangian solutions to~\eqref{e:fast-mean-field} we refer to Definitions~\ref{d:eulerian} and~\ref{d:lagrangian}, with the obvious modifications (see also~\cite{Gigli}). In the next proposition, we briefly discuss existence and uniqueness of solutions~\eqref{e:fast-mean-field}.

\begin{proposition}
\label{p:fast_good}
Let $v_{\Psi}$ satisfy~$\mathrm{(v1)}$--$\mathrm{(v3)}$ and let $F \colon \mathcal{P}_1(\mathbb{R}^d)\times \mathbb{R}^d \times ( 0 , +\infty) \times U \to (-\infty,+\infty]$ satisfy~$(\mathrm{F1})$--$(\mathrm{F5})$. Then, for every $\bar{\mu} \in \mathcal{P}_{c} (\mathbb{R}^{d})$ there exists a unique Eulerian (and Lagrangian) solution to~\eqref{e:fast-mean-field} with initial condition~$\bar{\mu}$. Moreover, for every $\delta>0$ and every $\bar{\mu}, \bar{\mu}^{n} \in \mathcal{P} (B_{\delta})$ such that $\mathcal{W}(\bar{\mu}^{n} , \bar{\mu}) \to 0$ as $n \to \infty$ we have that the corresponding solutions~$\mu, \mu^{n} \in C([0, T]; (\mathcal{P}_{1} (\mathbb{R}^{d}), \mathcal{W}_{1}))$ with initial conditions~$\bar{\mu}$ and~$\bar{\mu}^{n}$, respectively, satisfy
\begin{equation}
\lim_{n \to \infty} \, \mathcal{W}_{1} (\mu_{t}^{n}, \mu_{t} ) = 0 \qquad \text{uniformly for $t \in [0, T]$.}
\end{equation}
\end{proposition}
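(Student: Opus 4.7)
The proof will follow the template of Theorem~\ref{Entropic_Main}, but in the simpler setting of probability measures on~$\mathbb{R}^{d}$ driven by the velocity field~$w_{\mu}$. The main building block will be a stability estimate for Lagrangian solutions, which then yields at once the convergence assertion in the statement and, via an atomic approximation of~$\bar{\mu}$, the existence of a Lagrangian (hence Eulerian) solution. Uniqueness of Eulerian solutions will be obtained via the superposition principle of Theorem~\ref{Superposition}, relying on the fact that the driving ODE $\dot{x}_{t}=w_{\mu_{t}}(x_{t})$ has a unique solution for every initial condition thanks to Lemma~\ref{l:W}(i).

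\textbf{Step 1: Stability of Lagrangian solutions.} Let $\bar{\mu}^{1},\bar{\mu}^{2}\in\mathcal{P}(B_{\delta})$ and assume $\mu^{1},\mu^{2}\in C([0,T];(\mathcal{P}_{1}(\mathbb{R}^{d}),\mathcal{W}_{1}))$ are Lagrangian solutions to~\eqref{e:fast-mean-field} with these initial data. Arguing as in Lemma~\ref{l:4.7} but using Lemma~\ref{l:W}(ii) in place of~\eqref{coup_help4}, the Grönwall inequality yields a radius $\varrho=\varrho(\delta,T)$ such that the associated flow maps $\boldsymbol{\mathrm{X}}_{\mu^{i}}(t,0,\bar{x})$ stay in $B_{\varrho}$ for every $\bar{x}\in B_{\delta}$ and $t\in[0,T]$. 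Fixing an optimal transport plan $\Pi$ between $\bar{\mu}^{1}$ and $\bar{\mu}^{2}$ and applying Lemma~\ref{l:W}(i) to $w_{\mu^{1}_{s}}$ and $w_{\mu^{2}_{s}}$, a double Grönwall argument (first to compare two trajectories, then to close the inequality at the level of the Wasserstein distance, as in Step~1 of the proof of Theorem~\ref{Entropic_Main}) gives a constant $C=C(\delta,T)$ such that
\begin{equation*}
\mathcal{W}_{1}(\mu^{1}_{t},\mu^{2}_{t})\leq C\,\mathcal{W}_{1}(\bar{\mu}^{1},\bar{\mu}^{2})\qquad\text{for every }t\in[0,T].
\end{equation*}

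\textbf{Step 2: Existence.} Given $\bar{\mu}\in\mathcal{P}_{c}(\mathbb{R}^{d})$, pick $\delta>0$ with $\mathrm{spt}(\bar{\mu})\subset B_{\delta}$ and approximate $\bar{\mu}$ in $\mathcal{W}_{1}$ by atomic measures $\bar{\mu}^{N}=\tfrac{1}{N}\sum_{i=1}^{N}\delta_{\bar{x}^{i,N}}$ concentrated in $B_{\delta}$ (for instance, via i.i.d.\ samples of law~$\bar{\mu}$). By Proposition~\ref{p:fast_good_N} the particle system~\eqref{limit_sym} with initial datum $\bar{\boldsymbol{x}}^{N}=(\bar{x}^{1,N},\ldots,\bar{x}^{N,N})$ admits a unique solution, and the corresponding empirical measures $\mu^{N}_{t}=\tfrac{1}{N}\sum_{i=1}^{N}\delta_{x^{i,N}_{t}}$ are Lagrangian solutions to~\eqref{e:fast-mean-field} with initial datum $\bar{\mu}^{N}$. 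The stability estimate of Step~1 implies that $\{\mu^{N}\}_{N}$ is a Cauchy sequence in $C([0,T];(\mathcal{P}_{1}(\mathbb{R}^{d}),\mathcal{W}_{1}))$ and therefore converges uniformly in~$t$ to some $\mu\in C([0,T];(\mathcal{P}_{1}(B_{\varrho}),\mathcal{W}_{1}))$. Passing to the limit $N\to\infty$ in the Lagrangian representation $\mu^{N}_{t}=\boldsymbol{\mathrm{X}}_{\mu^{N}}(t,0,\cdot)_{\#}\bar{\mu}^{N}$, and using again Lemma~\ref{l:W}(i) together with Grönwall to control the continuous dependence $\boldsymbol{\mathrm{X}}_{\mu^{N}}\to\boldsymbol{\mathrm{X}}_{\mu}$, one checks that $\mu_{t}=\boldsymbol{\mathrm{X}}_{\mu}(t,0,\cdot)_{\#}\bar{\mu}$, so $\mu$ is a Lagrangian solution; by the analogue of Remark~\ref{coup_help18}, it is an Eulerian solution as well.

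\textbf{Step 3: Uniqueness and stability in the statement.} Uniqueness of Lagrangian solutions is immediate from Step~1. For Eulerian solutions, one argues as in Theorem~\ref{thm:uniqueness}: if $\mu$ is an Eulerian solution, the sublinearity from Lemma~\ref{l:W}(ii) together with the continuity of $t\mapsto m_{1}(\mu_{t})$ ensures that the integrability hypothesis~\eqref{e:hp-supprin} of Theorem~\ref{Superposition} holds; the resulting $\boldsymbol{\eta}\in\mathcal{P}(C([0,T];\mathbb{R}^{d}))$ is concentrated on solutions of $\dot{x}=w_{\mu_{t}}(x)$, each of which is uniquely determined by its initial datum by Lemma~\ref{l:W}(i) and standard ODE theory, so $\mu_{t}=\boldsymbol{\mathrm{X}}_{\mu}(t,0,\cdot)_{\#}\bar{\mu}$ is forced and coincides with the Lagrangian solution. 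Finally, the stability assertion in the statement is exactly the estimate of Step~1 applied to $\bar{\mu}$ and~$\bar{\mu}^{n}$. I do not foresee a genuine obstacle: the only delicate point is to verify, as in the analogue of Theorem~\ref{Entropic_Main}, that the limit~$\mu$ produced in Step~2 is actually transported by its own flow, which is routine once the uniform-in-$N$ bound $\mu^{N}_{t}\in\mathcal{P}(B_{\varrho})$ is in hand.
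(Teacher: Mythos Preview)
Your proposal is correct and follows essentially the same approach as the paper, whose proof is the single sentence ``The thesis can be obtained by combining Lemma~\ref{l:W} with the arguments used in Theorems~\ref{Entropic_Main} and~\ref{thm:uniqueness}.'' You have simply spelled out these arguments in the natural way: stability of Lagrangian solutions via Lemma~\ref{l:W}(i) and a double Gr\"onwall, existence via atomic approximation and Proposition~\ref{p:fast_good_N}, and uniqueness of Eulerian solutions via the superposition principle as in Theorem~\ref{thm:uniqueness}.
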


\begin{proof}
The thesis can be obtained by combining Lemma~\ref{l:W} with the arguments used in Theorems~\ref{Entropic_Main} and~\ref{thm:uniqueness}.
%\textcolor{red}{There is still a dependence on the state: is  it covered by standard theory?}
\end{proof}

\begin{remark}
\label{r:compact_support}
As a consequence of Proposition~\ref{p:fast_good}, we have that for every $\delta>0$ and every $\bar{\mu} \in \mathcal{P}(B_{\delta})$, there exists~$\varrho>0$ (only depending on~$\delta$ and $\varepsilon$) such that the solution~$\mu \in C([0, T]; (\mathcal{P}_{1}(\mathbb{R}^{d}) , \mathcal{W}_{1}))$ of~\eqref{e:fast-mean-field} with initial condition~$\bar{\mu}$ satisfies $\mathrm{spt} (\mu_{t}) \subseteq B_{\varrho}$ for every $t \in [0, T]$. This can be proven, for instance, by taking a sequence of empirical measures $\bar{\mu}^{N} \in \mathcal{P}(B_{\delta})$ such that $\mathcal{W}_{1}(\bar{\mu}^{N}, \bar{\mu}) \to 0$ and applying Propositions~\ref{p:fast_good_N} and~\ref{p:fast_good}.
\end{remark}

%\textcolor{red}{We have to discuss existence of this system. We have to make all the constant a bit more explicit in the statements above (and below). The dependence on the norm of the initial data must be written. Also the mean field limit of system~\eqref{limit_sym} must be defined, with its stability inequality (see Lemma 4.5 in BSF. Everything should work fine because we have local Lipschitz continuity, the only equation involved is the one in $x$, $\ell$ is determined by a minimum problem and everything is Lipschitz w.r.t.~$\mathcal{W}_{1}$ and $x$.}
%
%\textcolor{blue}{Do the above with a lemma with short proof, only recall the main steps or known theory.}

%
% Consider the system
%\begin{equation}\label{limit_sym}
%\begin{cases}
%\dot{\hat{x}}_t^{\varepsilon,i} = v_{\hat{\Lambda}_{t}^{\varepsilon,N}}(\hat{x}_t^{\varepsilon,i},\hat{\ell}_t^{\,\varepsilon,i})\\ 
%\hat{\ell}_t^{\,\varepsilon,i} = \argmin\limits_{\ell\in C_\varepsilon} G_{\hat{\Lambda}_{t}^{\varepsilon,N}}(\hat{x}_t^{\varepsilon,i},\ell)\\
%\hat{x}_0^{\varepsilon,i}=\overline{x}^{\varepsilon,i}
%\end{cases}\qquad \textrm{for }i=1,\dots,N,\,\,t\in(0,T]\,,
%\end{equation}
%where 
%\begin{equation}
%\hat{\Lambda}_{t}^{\varepsilon,N} = \frac{1}{N}\sum\limits_{i=1}^N \delta_{(\hat{x}_t^{\varepsilon,i},\hat{\ell}_t^{\,\varepsilon,i})}\,.
%\end{equation}

%Then, for every $1\leq p\leq 2$ and for almost every $t\in (0,T]$, problem \eqref{limit_sym} admits a unique solution $t\mapsto\boldsymbol{\hat{y}}_t^{\varepsilon}$.

We are finally ready to discuss the convergence of the solutions to the continuity equations in the fast reaction limit.

\begin{theorem}
\label{t:final_mean_field}
Let $v_{\Psi}$ satisfy~$\mathrm{(v1)}$--$\mathrm{(v3)}$, let $F \colon \mathcal{P}_1(\mathbb{R}^d)\times \mathbb{R}^d \times ( 0 , +\infty) \times U \to (-\infty,+\infty]$ satisfy~$(\mathrm{F1})$--$(\mathrm{F5})$, let $\mathcal{T}_{\Psi}$ be defined as in~\eqref{fast_reaction}, let $\delta>0$, and let $\bar{\Lambda} \in \mathcal{P}(B_{\delta}^{Y_{\varepsilon}})$ and $\bar{\mu} = \pi_{\#} \bar{\Lambda} \in \mathcal{P}(B_{\delta})$. For every $\lambda>0$, let $\Lambda_{\lambda} \in C([0, T]; (\mathcal{P}_{1} (Y_{\varepsilon}) , \mathcal{W}_{1}))$ be the solution to~\eqref{e:continuity_lambda} with initial condition~$\bar{\Lambda}$ and let $\mu \in C([0, T]; (\mathcal{P}_{1} ( \mathbb{R}^{d} ) , \mathcal{W}_{1}))$ be the solution to~\eqref{e:fast-mean-field} with initial condition~$\bar{\mu}$. Then, for every $t \in [0, T]$ we have that
\begin{align}
\label{e:hell6}
& \mathcal{W}_{1} (\Lambda_{\lambda, t}, (id, \Delta)_{\#} \mu_{t})  \leq \chi_{\varepsilon, \delta}  \Big( \frac{1}{\sqrt{\lambda}} +  e^{-\lambda \gamma_{\varepsilon} T}\Big)  \qquad \text{if $p \in [1, 2]$,}\\
\label{e:hell6.1}
& \mathcal{W}_{1} (\Lambda_{\lambda, t}, (id, \Delta)_{\#} \mu_{t})  \leq \chi_{\varepsilon, \delta} \Big( \Big(\frac{1}{\lambda}\Big)^{\frac{1}{p}} +  e^{-\frac{2 \lambda \gamma_{\varepsilon} T}{p}} \Big) \qquad \text{if $p \in (2, +\infty)$,}
\end{align}
where $\gamma_{\varepsilon}$ and~$\chi_{\varepsilon, \delta}$ are the constants introduced in Lemma~\ref{lemma} and Theorem~\ref{Newton}, respectively.
\end{theorem}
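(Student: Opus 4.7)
The plan is to obtain~\eqref{e:hell6}--\eqref{e:hell6.1} by approximating the initial datum $\bar\Lambda \in \mathcal{P}(B_\delta^{Y_\varepsilon})$ by atomic measures, applying the $N$-particle estimate of Theorem~\ref{Newton}, and then passing to the limit as $N \to \infty$, exploiting the fact that the constants $\chi_{\varepsilon,\delta}$ and $\gamma_\varepsilon$ produced there are independent of $N$.

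More precisely, I would first fix a sequence of empirical measures $\bar\Lambda^N = \frac{1}{N}\sum_{i=1}^{N} \delta_{\bar y^{i,N}}$ with $\bar y^{i,N} \in B_\delta^{Y_\varepsilon}$ and $\mathcal{W}_1(\bar\Lambda^N, \bar\Lambda) \to 0$, obtained for instance by an i.i.d.~sampling as in the proof of Theorem~\ref{Entropic_Main}. Setting $\bar\mu^N := \pi_\# \bar\Lambda^N \in \mathcal{P}(B_\delta)$, one also has $\mathcal{W}_1(\bar\mu^N, \bar\mu) \to 0$. For each $N$ and $\lambda$, let $\boldsymbol{y}_{\lambda,t}^N$ be the solution of~\eqref{two_time_scale_system} with initial datum $(\bar y^{1,N}, \dots, \bar y^{N,N})$ and $\boldsymbol{x}_t^N$ be the solution of~\eqref{limit_sym} with initial datum $(\bar x^{1,N}, \dots, \bar x^{N,N})$, where $\bar x^{i,N} = \pi(\bar y^{i,N})$. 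Denoting by $\Lambda_{\lambda,t}^N$ the empirical measure associated with $\boldsymbol{y}_{\lambda,t}^N$ and by $\mu_t^N$ that associated with $\boldsymbol{x}_t^N$, Theorem~\ref{Newton} gives the estimate between $\Lambda_{\lambda,t}^N$ and $(id, \Delta(\cdot,\mu_t^N))_\# \mu_t^N$ through the standard control
\begin{equation*}
\mathcal{W}_1\bigl(\Lambda_{\lambda,t}^N, (id,\Delta(\cdot,\mu_t^N))_\# \mu_t^N\bigr) \leq \|\boldsymbol{y}_{\lambda,t}^N - \boldsymbol{y}_t^N\|_{\overline{Y}^N},
\end{equation*}
with the right-hand side bounded by $\chi_{\varepsilon,\delta}(1/\sqrt{\lambda} + e^{-\lambda \gamma_\varepsilon T})$ if $p \in [1,2]$ and by $\chi_{\varepsilon,\delta}((1/\lambda)^{1/p} + e^{-2\lambda \gamma_\varepsilon T/p})$ if $p \in (2,+\infty)$.

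Next, I would let $N \to \infty$ keeping $\lambda$ fixed. By Corollary~\ref{c:lambda}$(iii)$ applied with initial data $\bar\Lambda^N \to \bar\Lambda$, we have $\mathcal{W}_1(\Lambda_{\lambda,t}^N, \Lambda_{\lambda,t}) \to 0$ uniformly in $t \in [0,T]$. Similarly, by Proposition~\ref{p:fast_good} applied with $\bar\mu^N \to \bar\mu$, we have $\mathcal{W}_1(\mu_t^N, \mu_t) \to 0$ uniformly in $t$; moreover, by Remark~\ref{r:compact_support} all these measures are supported in a common ball $B_{\varrho'}$ depending only on $\delta$ and $\varepsilon$. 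It then remains to verify that $(id,\Delta(\cdot,\mu_t^N))_\# \mu_t^N \to (id, \Delta(\cdot,\mu_t))_\# \mu_t$ in $\mathcal{W}_1$; this follows by combining the Lipschitz continuity of $(x,\nu) \mapsto \Delta(x,\nu)$ on $B_{\varrho'} \times \mathcal{P}(B_{\varrho'})$ granted by~\eqref{minimality_3}--\eqref{minimality_4} of Corollary~\ref{Corollary} with a direct push-forward estimate, using an optimal plan $\Pi^N$ between $\mu_t^N$ and $\mu_t$ and testing with $1$-Lipschitz functions against $((id,\Delta(\cdot,\mu_t^N)) \otimes (id,\Delta(\cdot,\mu_t)))_\# \Pi^N$.

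The main technical point, and the only potential obstacle, is that all the estimates involved in the last step must be uniform in $N$; this is guaranteed because the constants $\chi_{\varepsilon,\delta}$, $\gamma_\varepsilon$, $A_{\varepsilon,\varrho'}$ depend only on $\delta$ and $\varepsilon$ (through $r_\varepsilon$, $R_\varepsilon$, and the a~priori bound $\varrho$ from Lemma~\ref{lemma}$(i)$ and Proposition~\ref{p:fast_good_N}), and not on $N$. Passing to the limit $N\to\infty$ in the inequality obtained above then preserves the right-hand side and yields~\eqref{e:hell6} and~\eqref{e:hell6.1}, completing the proof.
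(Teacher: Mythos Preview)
Your proposal is correct and follows essentially the same approach as the paper: approximate $\bar\Lambda$ by empirical measures, split $\mathcal{W}_{1}(\Lambda_{\lambda,t},(id,\Delta)_{\#}\mu_t)$ via the triangle inequality into the three pieces $\mathcal{W}_{1}(\Lambda_{\lambda,t},\Lambda^{N}_{\lambda,t})$, $\mathcal{W}_{1}(\Lambda^{N}_{\lambda,t},(id,\Delta)_{\#}\mu^{N}_{t})$, and $\mathcal{W}_{1}((id,\Delta)_{\#}\mu^{N}_{t},(id,\Delta)_{\#}\mu_{t})$, control the middle one uniformly in~$N$ by Theorem~\ref{Newton}, and send the outer two to zero using Corollary~\ref{c:lambda}, Proposition~\ref{p:fast_good}, Remark~\ref{r:compact_support}, and the Lipschitz continuity of~$\Delta$ from Corollary~\ref{Corollary}. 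The paper carries out exactly this scheme, so nothing further is needed.
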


\begin{proof}
We proceed by finite particles approximation and let us fix $\lambda \in (0, +\infty)$. Let us fix a sequence~$\bar{\by}_{N} \coloneqq (\bar{y}^{1}_{N}, \ldots, \bar{y}^{N}_{N}) \in (B_{\delta}^{Y_{\varepsilon}})^{N}$, let $\bar{\Lambda}^{N} \in \mathcal{P}(B^{Y_{\varepsilon}}_{\delta})$ denote the associated empirical measure, and assume that $\mathcal{W}_{1} (\bar{\Lambda}^{N} , \bar{\Lambda} ) \to 0$. Let us further denote by $\by_{\lambda,N, t} \in Y_{\varepsilon}^{N}$ the solution to~\eqref{two_time_scale_system} with initial condition $\bar{\by}_{N}$, let $\Lambda_{\lambda, t}^{N}$ be the corresponding empirical measure, let $\boldsymbol{x}_{N, t} \in (\mathbb{R}^{d})^{N}$ be the solution to~\eqref{limit_sym} with initial condition $\bar{\boldsymbol{x}}_{N}  = (x^{1}_{N}, \ldots, x^{N}_{N}) \in (B_{\delta})^{N}$, and finally let $\mu^{N}_{t}$ be the corresponding empirical measure. 

By triangle inequality, for every $N \in \mathbb{N}$ and every $t \in [0, T]$ we have that
\begin{equation}
\label{e:hell8}
\begin{split}
\mathcal{W}_{1} (\Lambda_{\lambda, t}, (id, \Delta)_{\#} \mu_{t}) \leq & \ \mathcal{W}_{1} (\Lambda_{\lambda, t}, \Lambda^{N}_{\lambda, t}) + \mathcal{W}_{1} (\Lambda^{N}_{\lambda, t}, (id, \Delta)_{\#} \mu^{N}_{t}) 
\\
&
+  \mathcal{W}_{1} ( (id, \Delta)_{\#}  \mu^{N}_{t}, (id, \Delta)_{\#} \mu_{t}) \,.
\end{split}
\end{equation}
By Corollary~\ref{c:lambda} we have that
\begin{equation}
\label{e:hell9}
\lim_{N \to \infty} \,  \mathcal{W}_{1} (\Lambda_{\lambda, t}, \Lambda^{N}_{\lambda, t}) = 0 \qquad \text{uniformly in $[0, T]$.}
\end{equation} 
Thanks to Remark~\ref{r:compact_support}, there exists~$\varrho>0$ such that $\mu^{N}_{t}, \mu_{t} \in \mathcal{P}(B_{\varrho})$ for every $t \in [0, T]$ and every $N \in \mathbb{N}$. Hence, by Proposition~\ref{p:fast_good} and by~\eqref{minimality_3} of Proposition~\ref{G} we have that
\begin{equation}
\label{e:hell10}
\lim_{N \to \infty} \,  \mathcal{W}_{1} ( (id, \Delta)_{\#}  \mu^{N}_{t}, (id, \Delta)_{\#} \mu_{t}) = 0 \qquad \text{uniformly in~$[0, T]$.}
\end{equation}
Applying Theorem~\ref{Newton} to $\by_{\lambda, N, t}$ and to 
\begin{displaymath}
\by_{N, t} \coloneqq \big( (x^{1}_{N, t}, \Delta (x^{1}_{N, t}, \mu^{N}_{t}) , \ldots,  (x^{N}_{N, t}, \Delta (x^{N}_{N, t} , \mu^{N}_{t}) \big)\,,
\end{displaymath}
we get that
\begin{align}
\label{e:hell11}
& \mathcal{W}_{1} (\Lambda^{N}_{\lambda, t}, (id, \Delta)_{\#} \mu^{N}_{t})  \leq \chi_{\varepsilon, \delta}  \Big( \frac{1}{\sqrt{\lambda}} +  e^{-\lambda \gamma_{\varepsilon} T}\Big) \qquad \text{if $p \in [1, 2]$,}\\
& \label{e:hell12} \mathcal{W}_{1} (\Lambda^{N}_{\lambda, t}, (id, \Delta)_{\#} \mu^{N}_{t}) \leq \chi_{\varepsilon, \delta} \Big( \Big(\frac{1}{\lambda}\Big)^{\frac{1}{p}} +  e^{-\frac{2 \lambda \gamma_{\varepsilon} T}{p}} \Big) \qquad \text{if $p \in (2, +\infty)$.}
\end{align}
Combining~\eqref{e:hell8}--\eqref{e:hell12} we infer~\eqref{e:hell6} and~\eqref{e:hell6.1}.
\end{proof}

\bigskip

\noindent\textbf{Acknowledgments}
The work of SA was partially funded by the Austrian Science Fund through the projects ESP-61 and P-35359.
The work of MM was partially supported by the \emph{Starting grant per giovani ricercatori} of Politecnico di Torino, by the MIUR grant Dipartimenti di Eccellenza 2018-2022 (E11G18000350001), and by the PRIN 2020 project \emph{Mathematics for industry 4.0 (Math4I4)} (2020F3NCPX) financed by the Italian Ministry of University and Research.
The work of FS was partially supported by the project \emph{Variational methods for stationary and evolution problems with singularities and interfaces} PRIN 2017 (2017BTM7SN) financed by the Italian Ministry of Education, University, and Research and by the project Starplus 2020 Unina Linea 1 \emph{New challenges in the variational modeling of continuum mechanics} from the University of Naples ``Federico II'' and Compagnia di San Paolo (CUP: E65F20001630003).
MM and FS are members of the GNAMPA group of INdAM.
This work stems for the Master's Degree thesis of CDE at Politecnico di Torino, defended on 15 July 2022.

\bibliographystyle{siam}
\bibliography{ADEMS_references.bib}

\end{document}